\documentclass[openright]{report}
\usepackage{pdfpages}
\usepackage{hujititlepage}

\usepackage{amsmath}
\usepackage{color}
\usepackage{amsfonts}
\usepackage{amsthm}
\usepackage{hyperref}
\usepackage{amssymb}
\usepackage{bbm}
\usepackage{graphicx}

\usepackage{algorithm}
\usepackage{algorithmic}


%

%




\newtheorem{lemma}{Lemma}
\newtheorem{corollary}{Corollary}
\newtheorem{theorem}{Theorem}

\newtheorem*{conjecture*}{Conjecture}

\newtheorem{remark}{Remark}

\hypersetup{linktocpage}


%
{
\vspace{0.01cm}
\begin{center}
	\begin{boxedminipage}{0.8\linewidth}
		\begin{center}
			\textbf{\texttt{#1}}
		\end{center}
	} %
	{
	\end{boxedminipage}
\end{center}
 \vspace{0.3cm}
}

\newcommand{\bx}{{\mathbf x}}
\newcommand{\by}{{\mathbf y}}
\newcommand{\bz}{{\mathbf z}}
\newcommand{\bw}{{\mathbf w}}
\newcommand{\bb}{{\mathbf  b}}
\newcommand{\bv}{{\mathbf v}}
\newcommand{\bu}{{\mathbf u}}

\newcommand{\be}{{\mathbf e}}
\newcommand{\br}{{\mathbf{r}}}

\newcommand{\balpha}{\boldsymbol{\alpha}}




\newcommand{\bE}{\mathbb{E}}
\newcommand{\reals}{\mathbb{R}}

\newcommand{\bN}{\mathbb{N}}

\newcommand{\bC}{\mathbb{C}}



\newcommand{\spec}[1]{\sigma\!\circpar{#1}}

\newcommand{\eqdef}{\stackrel{\vartriangle}{=}}

\newcommand{\cA}{\mathcal{A}}
\newcommand{\cB}{\mathcal{B}}
\newcommand{\cC}{\mathcal{C}}
\newcommand{\cD}{\mathcal{D}}

\newcommand{\cF}{\mathcal{F}}
\newcommand{\cG}{\mathcal{G}}

\newcommand{\cI}{\mathcal{I}}
\newcommand{\cJ}{\mathcal{J}}

\newcommand{\cM}{\mathcal{M}}

\newcommand{\cP}{\mathcal{P}}

\newcommand{\cS}{\mathcal{S}}

\newcommand{\cU}{\mathcal{U}}




\renewenvironment{proof}{\par\noindent{\bf Proof\ }}{\hfill\BlackBox\\[2mm]}
\newcommand{\BlackBox}{\rule{1.5ex}{1.5ex}}

\makeatletter
\def\moverlay{\mathpalette\mov@rlay}
\def\mov@rlay#1#2{\leavevmode\vtop{%
   \baselineskip\z@skip \lineskiplimit-\maxdimen
   \ialign{\hfil$\m@th#1##$\hfil\cr#2\crcr}}}
\newcommand{\charfusion}[3][\mathord]{
    #1{\ifx#1\mathop\vphantom{#2}\fi
        \mathpalette\mov@rlay{#2\cr#3}
      }
    \ifx#1\mathop\expandafter\displaylimits\fi}
\makeatother



\DeclareMathOperator*{\argmin}{argmin} 

\renewcommand{\eqref}[1]{Equation~(\ref{#1})}
\newcommand{\ineqref}[1]{Inequality~(\ref{#1})}

\newcommand{\thmref}[1]{Theorem~\ref{#1}}
\newcommand{\lemref}[1]{Lemma~\ref{#1}}
\newcommand{\defref}[1]{Definition~\ref{#1}}
\newcommand{\corref}[1]{Corollary~\ref{#1}}

\newcommand{\coursename}{(67577) INTRODUCTION TO MACHINE LEARNING}

\newcommand{\handout}[5]{
   \renewcommand{\thepage}{#1-\arabic{page}}
   \noindent
   \begin{center}
   \framebox{
      \vbox{
    \hbox to 5.78in { {\bf \coursename}
         \hfill #2 }
       \vspace{4mm}
       \hbox to 5.78in { {\Large \hfill #5  \hfill} }
       \vspace{2mm}
       \hbox to 5.78in { {\it #3 \hfill #4} }
      }
   }
   \end{center}
   \vspace*{4mm}
}

\newcommand{\norm}[1]{\left\Vert#1\right\Vert}
\newcommand{\normsq}[1]{\left\Vert#1\right\Vert^2}
\newcommand{\inprod}[2]{ \left< #1 , #2 \right>}
\newcommand{\circpar}[1]{\left( #1 \right)}
\newcommand{\absval}[1]{\left|\,#1\,\right|}

\newcommand{\set}[1]{\left\{ #1 \right\} }
\newcommand{\myset}[2]{\left\{ #1 \right.\left|~ #2 \right\} }


\newcommand{\Diag}[1]{\text{Diag}\circpar{#1}}
\newcommand{\diag}[1]{\text{diag}\circpar{#1}}
\newcommand{\posdef}[2]{\cS^{#1}_{#2}}
\newcommand{\posdefun}[2]{\cF^{#1}_{#2}}
\newcommand{\mymat}[1]{\circpar{\begin{array}{cccccccc} #1 \end{array}}}

\newcommand{\bigO}[1]{\mathcal{O}{\left(#1\right)}}
\newcommand{\bigtO}[1]{\tilde{\mathcal{O}}{\left(#1\right)}}

\newcommand{\rhoM}{\rho_{\cM}}
\newcommand{\chiM}{\chi_{\cM}}

\newcommand{\IC}{\cI\cC}
\newcommand{\ICA}{\IC_\Sigma\circpar{\cA,\epsilon}}

\begin{document}
\title{On Lower and Upper Bounds in Smooth Strongly Convex Optimization\\ \vspace{2 mm} \LARGE{A Unified Approach via Linear Iterative Methods}}
\author{Yossi Arjevani}
\date{August 2014}
\maketitle


\begin{abstract}
In this thesis we develop a novel framework to study smooth and strongly convex optimization algorithms, both deterministic and stochastic. Focusing on quadratic functions we are able to examine optimization algorithms as a recursive application of linear operators. This, in turn, reveals a powerful connection between a class of optimization algorithms and the analytic theory of polynomials whereby new lower and upper bounds are derived. In particular, we present a new and natural derivation of Nesterov's well-known Accelerated Gradient Descent method by employing simple 'economic' polynomials. This rather natural interpretation of AGD contrasts with earlier ones which lacked a simple, yet solid, motivation. Lastly, whereas existing lower bounds are only valid when the dimensionality scales with the number of iterations, our lower bound holds in the natural regime where the dimensionality is fixed. \\
\\
Our main contributions can be summarized as follows: 
\begin{itemize}
	\item We define a class of algorithms ($p$-CLI) in terms of linear operations on the last $p$ iterations, and show that they subsume some of the most interesting algorithms used in practice.
	\item We prove that any $p$-CLI optimization algorithm must use at least
	\begin{align*}
		\tilde{\Omega}\circpar{\sqrt[p]{Q}\ln(1/\epsilon)} 
	\end{align*}
	iterations in order to obtain an $\epsilon$-suboptimal solution. As mentioned earlier, unlike existing lower bounds, our bound holds for a fixed dimensionality.
	\item We show that there exist matching $p$-CLI optimization algorithms which attain the convergence rates  stated above for all $p$. Alas, for $p>2$ this requires an expensive calculation which renders this algorithm inapplicable in practice. 
		
	\item Consequently, we focus on a restricted easy-to-compute subclass of $p$-CLI optimization algorithms. This yields a new derivation of Full Gradient Descent, Accelerated Gradient Descent, The Heavy-Ball method and potentially others, by following a very natural line of argument. 
	

	\item We present new schemes which offer better utilization of second-order spectral information by exploiting breaches in existing lower bounds. This leads to a new optimization algorithm which obtains a rate of $\sqrt[3]{Q}\ln(1/\epsilon)$
	in presence of huge spectral gaps.

	\item Lastly, we show that the convergence analysis of SDCA is tight and suggest a few extensions of this framework so as to allow one to prove that the convergence analysis of other stochastic algorithms e.g. SAG, SVRG, Accelerated SDCA, etc. is tight, as well.
\end{itemize}

\end{abstract}

\newpage

\section*{\large{Acknowledgments}}

This thesis is dedicated to my grandmother Zlicha Arjevani, whose wisdom and kindness are surpassed only by her modesty.\\

I would like to thank: my advisor Prof. Shai Shalev-Shwartz for guiding this research
with his outstanding insights; Ohad Shamir for sharing his enlightening thoughts; My roommates, Yoav Wald, Alon Gonen, Nir Rosenfield, Dan Rosenbaum, Alon Cohen (Editorial Remarks), Maya Elroy and Cobi Cario, for being both great friends and colleagues; and Ma'ayan Maliach for her unconditional companionship. \\

Finally, to my family, Yoni, Lea, Hila and Maor Arjevani. Thank you for all your
love and support throughout the years.

\tableofcontents

\chapter{Introduction}

\section{Motivation}

In the field of mathematical optimization one is interested in efficiently solving the following  minimization problem
\begin{align} \label{opt:gen_min}
\min_{x\in X} f(x)
\end{align}
where the \emph{Objective Function}  $f$ is some real-valued function defined over the \emph{Constraints Set} $X$. \\
Many core problems in the field of Computer Science, Economic, and Operations Research can be readily expressed in this form, thus rendering this minimization task exceedingly significant. That being said, in its full generality this problem is just too hard to solve or even to approximate. Consequently,  various structural assumptions on the objective function and the constraints set, along with better-suited optimization algorithms, has been proposed in recent decades so as to make this problem viable.\\

One such case is smooth strongly convex  functions (see definition in \ref{section:convex_ana}), where $X$ is some Hilbert space. 
A wide range of applications together with efficient solvers have made this family of problems very important. Naturally, an interesting question  arises: How fast can these kind of problems be solved? better said, what is the worst-case computational complexity of solving \ref{opt:gen_min} for smooth strongly-convex objective functions to a given degree of accuracy? Arguably, an even more important question is: which optimization algorithm can be proven to attain this bound?\footnote{Natural as these questions might look nowadays, matters were quite different only few decades ago. In his book 'Introduction to Optimization' which dates back to 87', the well-known computer scientist Polyak B.T devotes a whole section as to: 'Why Are Convergence Theorems Necessary?'  (See section 1.6.2 in \cite{polyak1987introduction}).}
Prior to answering these, otherwise ill-defined, questions, one must first refer as to the exact nature of the underlying computational model. \\

Although being a widely accepted computational model in the theoretical computer sciences, the Turing Machine Model presents many obstacles when analyzing optimization algorithms. In their seminal work \cite{nemirovskyproblem}, Nemirovsky and Yudin overcame parts of this issue by proposing the \emph{black box computational model}. In the black-box model one assumes that the information regarding the objective function is acquired by iteratively querying an \emph{oracle}. Furthermore, the algorithm is assumed to have an unlimited amount of computational resources. In a nutshell, in each round one may carry out whichever computational task it desires, as long as it does not use any other information other than what was gathered thus far\footnote{In a sense, this model is dual to the Turing Machine model where all the information regarding the parameters of the problem is available prior to the execution of the algorithm, but the available computational resources are limited in time and space.}.\\

Using the black-box model Nemirovsky and Yudin have managed to show that the complexity of obtaining a sub-optimal solution of some smooth strongly convex function is highly correlated with a single number, the corresponding \emph{Condition Number}, denoted by $Q$ (See \ref{section:convex_ana}). This is demonstrated by the following well-celebrated theorem. Let $\cA$ be an optimization algorithm for smooth strongly-convex functions which employs first-order information only, i.e., upon querying at some point $x$, $\cA$ receives $(f(x),\nabla f(x))$ from the oracle.  Then, there exists an $L$-smooth $\mu$-strongly convex function such that for any $\epsilon>0$, if the number of issued queries is smaller than half the dimension of the problem $d$, then it must be at least
\begin{align} \label{opt:sqrtlb}
\tilde{\Omega} \circpar{\sqrt{Q} \ln (1/\epsilon)}
\end{align}
in order for $\cA$ to obtain a point $\hat{x}$ such that 
\begin{align} \label{ineq:eps_subopt}
f(\hat{x})-\min_x f(x) < \epsilon
\end{align}\\

This result can be seen as a starting point for this work. The restricted validity of the lower bound to the first $O\!\circpar{d}$ iterations is not a mere artifact of the proof. Indeed, from informational point of view, the minimizer of smooth strongly convex quadratic function can be found exactly after $d+1$ iterations.  Together with the fact that this bound is attainable by the Conjugate Gradient Descent method (CGD, see \cite{polyak1987introduction}), it seems that we have reached a dead end. Nevertheless, as we will shortly see, CGD has a very rich mechanism, as opposed to many popular algorithms which have been designed for large scale optimization scenarios, e.g. a huge problem dimension, that admit a much simpler mechanism. In this monograph, we will consider structural lower bounds that hold asymptotically for a certain class of algorithms. In contrast to existing techniques, ours does not exploit the fact that in any point only a limited amount of information about the objective is known, but rather assumes a certain structure of the optimization algorithm under consideration. Consequently, our technique lacks the universality of information-based approaches such as in \ref{opt:sqrtlb} and its modern successors, \cite{agarwal2012information,guzman2013lower,raginsky2011information} to name a few. However, by providing lower bounds on a certain class of optimization algorithms, it reveals number of important principles which have to be taken into account when designing efficient optimization algorithms.\\

The second part of this work concerns a cornerstone optimization algorithm, namely, Accelerated Gradient Descent (AGD), which is closely related to the lower bound stated above. At the time, when the work of Nemirovsky and Yudin was published, it was known that in order for the Gradient Descent (GD) optimization algorithm to obtain an $\epsilon$ sub-optimal solution, as in \ineqref{ineq:eps_subopt}, it suffices to have
\begin{align*}
\bigO{Q \ln (1/\epsilon)}
\end{align*}
first-order queries. As can be expected, closing the gap between this upper bound and the lower bound presented in \ref{opt:sqrtlb} has intrigued many researchers in the field. Eventually, it was this line of inquiry that  led to the discovery of AGD by Nesterov (see \cite{nesterov1983method}), a slight modification of the standard GD algorithm, whose iteration  complexity is 
\begin{align*}
\bigO{\sqrt{Q} \ln (1/\epsilon)}
\end{align*}
Unfortunately, AGD lacks the strong geometrical intuition which accompanies many optimization algorithms, such as GD and the Heavy Ball method. Primarily based on sophisticated technical manipulations, its proof has motivated many researchers in the field to look for a more tangible derivation of AGD (e.g. \cite{beck2009fast,baes2009estimate,tseng2008accelerated,sutskever2013importance,allen2014novel}). In a way, this downside has rendered the generalization of AGD to different optimization scenarios, such as constrained optimization problems, a highly non-trivial task which up to the present time does not admit a complete satisfactory solution. \\
Remarkably, the tools developed in the first part of the work have uncovered a simple way for deriving AGD, as well as the Heavy Ball method. This derivation emphasizes the algebraic nature of these methods.\\

Before delving into more details, we would like to present an instructive case which demonstrates the main ideas used in this work. 

\section{Case Study - Stochastic Dual Coordinate Ascent} \label{section:sdca_case_study}

We present the well-known optimization algorithm of Stochastic Dual Coordinates Ascent (SDCA) designed for solving Regularized Loss Minimization (RLM) problems. These problems are of great use in the field of Machine Learning. Next, by applying SDCA on quadratic loss functions we reformulate it as a linear iterative method. This, in turn, enables us to derive a lower bound on its convergence rate. For a thorough analysis of SDCA the reader is referred to \cite{shalev2013stochastic}.\\
\\
A smooth-RLM problem is an optimization program which takes the form of
\begin{align} \label{opt:RLM}
\min_{\bw\in\reals^d}P(\bw) &\eqdef \frac{1}{n} \sum_{i=1}^n \phi_i(\bw^\top \bx_i) + \frac{\lambda}{2} \normsq{\bw}
\end{align}
where $\phi_i$ are $1/\gamma$-smooth and convex,  $\bx_1,\ldots,\bx_n$ are vectors in $\reals^d$ and $\lambda$ is a positive constant. 
SDCA minimizes an equivalent optimization problem 
\begin{align*}
\min_{\alpha\in\reals^n} D(\balpha) \eqdef \frac{1}{n}\sum_{i=1}^n \phi_i^\star(\balpha_i) +\frac{1}{2\lambda n^2} \normsq{\sum_{i=1}^n \balpha_i \bx_i}
\end{align*}
by repeatedly picking $z\sim \cU([n])$ uniformly and minimizing $D(\balpha)$ over the $z$'th coordinate. Note that $\phi^\star$ denotes the \emph{Fenchel conjugate} of $\phi$.
The latter optimization program is referred to as the \emph{Dual Problem}, while the problem presented in (\ref{opt:RLM}) is called the \emph{Primal Problem}.
A careful analysis shows that it is possible to convert a high quality solution of the dual problem into a high quality solution of the primal problem, resulting in a convergence rate of 
\begin{align*} \label{bigo:RLM_conv_rate}
\bigtO{\circpar{n+\frac{1}{\lambda \gamma}}\ln(1/\epsilon)}\\
\end{align*}
Assume that $\phi_i$ are non-negative, $\phi_i(0)\le 1$ and $\norm{\bx_i}\le 1$ for all $i$. In what follows, we show that this analysis is tight. \\

Let us define 
\begin{align*}
\phi_i(y) = y^2,\quad i=1,\dots,n
\end{align*}
and define $\bx_1=\bx_2=\cdots=\bx_n=\frac{1}{\sqrt{n}}\mathbbm{1}$. This yields
\begin{align} 
D(\balpha) &= \frac{1}{2}\balpha^\top \circpar{ \frac{1}{2n}I+ \frac{1}{\lambda n^2} \mathbbm{1}\mathbbm{1}^\top}\balpha
\end{align}
Obviously, the unique minimizer of $D(\balpha)$ is $\balpha^*\eqdef0$. \\

Given $i\in[n]$ and $\balpha\in\reals^n$ , one can verify that 
\begin{align}
\argmin_{\balpha' \in\reals} D(\balpha_1,\dots,\balpha_{i-1},\balpha',\balpha_{i+1},\dots,\balpha_{n}) = \frac{-2}{2+\lambda n} \sum_{j\neq i} \balpha_j 
\end{align}
Thus the next test point $\balpha^+$, generated by taking a $\balpha_i$-coordinate step, is linear transformation of the previous point,
\begin{align} \label{eq:rlm_costep}
\balpha^+=\circpar{I-\be_i \bu_i^\top}\balpha
\end{align}
Where 
\begin{align*}
\bu_i^\top &\eqdef \circpar{\frac{2}{2+\lambda n }, \dots, \frac{2}{2+\lambda n },\underbrace{1}_{i\text{'s entry}},\frac{2}{2+\lambda n },\ldots, \frac{2}{2+\lambda n } }
\end{align*}

Now, let $\balpha^k,~ k=1,\dots,K$ denote the $k$'th test point. These points are randomly generated by minimizing $D(\balpha)$ over the $z_i$'th coordinate in the $i$'th iteration, where $z_1,z_2,\dots,z_K\sim \mathcal{U}([n])$ is a sequence of $K$ uniform distributed i.i.d random variables. Applying \ref{eq:rlm_costep} over and over again for some initialization point $\balpha^0$ we have
\begin{align*}
\balpha^k&=\circpar{I-\be_{z_K}\bu_{z_K}^\top}\circpar{I-\be_{z_{K-1}}\bu_{z_{K-1}}^\top}\cdots\circpar{I-\be_{z_1}\bu_{z_1}^\top}\balpha^0
\end{align*}
To compute $\mathbb{E}[\balpha^K]$ note that by the i.i.d hypothesis and by the linearity of the expectation operator,
\begin{align}
\mathbb{E}\left[\balpha^K\right]&=\mathbb{E}\left[\circpar{I-\be_{z_K}\bu_{z_K}^\top}\circpar{I-\be_{z_{K-1}}\bu_{z_{K-1}}^\top}\cdots\circpar{I-\be_{z_1}\bu_{z_1}^\top}\balpha^0\right]\nonumber\\
&=\mathbb{E}\left[\circpar{I-\be_{z_K}\bu_{z_K}^\top}\right]\mathbb{E}\left[\circpar{I-\be_{z_{K-1}}\bu_{z_{K-1}}^\top}\right]\cdots\mathbb{E}\left[\circpar{I-\be_{z_1}\bu_{z_1}^\top}\right]\balpha^0\nonumber\\
&=\mathbb{E}\left[\circpar{I-\be_{z}\bu_{z}^\top}\right]^K\balpha^0 \label{kpoint}
\end{align}
Clearly, the convergence rate of this process is governed by the largest eigenvalue of $$E\eqdef \mathbb{E}\left[\circpar{I-\be_{z}\bu_{z}^\top}\right]$$By a straightforward calculation it can be shown that the eigenvalue of $E$, ordered by magnitude, are
\begin{align}
\underbrace{\frac{1}{2/\lambda + n},\dots , \frac{1 }{2/\lambda + n}}_{n-1 \text{ times}}, 1 - \frac{2 + \lambda}{2+\lambda n} 
 \end{align}
By choosing $\balpha^0$ to be the following normalized eigenvector corresponding to the largest eigenvalue,
$$\balpha^0=\circpar{\frac{1}{\sqrt{2}},-\frac{1}{\sqrt{2}},0,\dots,0}$$
and plugging it into (\eqref{kpoint}), we can now bound from below the distance of $\mathbb{E}[\balpha^K]$ to the optimal point $\balpha^*=0$,
\begin{align*}
\norm{\mathbb{E}\left[\balpha^K\right]-\balpha^*}
&=\norm{\mathbb{E}\left[\circpar{I-\be_{z}\bu_{z}^\top}\right]^K\balpha^0}\\
&=\circpar{1 - \frac{1}{2/\lambda+n}}^K\norm{\balpha^0}\\
&=\circpar{1 - \frac{2}{(4/\lambda+2n-1)+1}}^K\\
&\ge \circpar{\exp\circpar{\frac{-1}{2/\lambda+n-1}}}^K
\end{align*}
The last inequality is due to the following,
\begin{align*}
1-\frac{2}{x+1}\ge \exp\circpar{\frac{-2}{x-1}},\quad \forall x\ge1
\end{align*}
Therefore, we get that in order to obtain a solution whose distance form the optimal solution is less than $\epsilon>0$, the number of iterations must satisfy 
\begin{align*} 
K&\ge \circpar{2/\lambda+n-1}\ln\circpar{1/\epsilon} 
\end{align*}
Thus showing that, up to logarithmic factors, the analysis of the convergence rate of SDCA is tight. We remark that the chosen loss functions $\phi_i$ are $2$-smooth and that the the convergence rate stated in  \ref{bigo:RLM_conv_rate} is given in terms of $D(\balpha)$, i.e. it determines how many iteration are required in order to obtain $\balpha\in\reals^n$ such that $D(\balpha)-D(\balpha*)<\epsilon$ for a given $\epsilon>0$. \\

Motivated by this example, a major part of the work is devoted to formalizing and generalizing various aspects of the preceding arguments.

\section{Organization and Contributions}

We present contributions of the work in order of appearance.\\

In Chapter \ref{chapter:pcli_method} we introduce basic terminology and tools from the theory of linear iterative methods which will be used throughout this work. We focus on what we call $p$-CLI methods, a specific type of linear iterative methods around which this work revolves. We investigate the convergence properties of these methods and derive lower bounds as well as upper bounds on the convergence rate using the spectral radius of the corresponding linear operators. In spite of being 
an elementary result in Matrix Theory, this lower bound does not seem to appear in this form in standard literature.\\

Chapter \ref{chapter:pcli_algos} is mainly devoted for establishing a framework which generalizes the SDCA example shown above. In essence, we apply a given optimization algorithm on quadratic functions and then we explore the structure of the resulting algorithm. Note that since the close neighborhood of minimizers of smooth strongly convex functions can be efficiently approximated by quadratic functions, this technique is quite an intuitive one for our purposes. It turns out that inspecting various algorithms under this framework, provides an insightful fresh way of systemically differentiating one optimization algorithm from another, thereby revealing, otherwise subtle, useful distinctions. Lastly, a $p$-CLI method which originates from an optimization algorithm has more structure than a general $p$-CLI method. This observation is formalized in the last section of this chapter, where we derive an extremely useful characterization of $p$-CLI optimization algorithms, a relatively wide class of optimization algorithms which includes SDCA, Full Gradient Descent, Accelerated Gradient Descent and the Heavy Ball method to name a few.  It is by this characterization that we convert claims regarding convergence properties of optimization algorithms into claims regarding polynomials, and vice versa.\\

Loosely speaking, a $p$-CLI optimization algorithm is an optimization algorithm whose test points are generated by repeatedly applying the same linear transformation on the previous $p$ test points so as to produce a new test point. In chapter \ref{chapter:lower_bounds} we present a novel lower bound on the convergence rate of such optimization algorithms. In effect, we prove that for any $p$-CLI optimization algorithm whose inversion matrix is diagonal, there exists an initialization point under which the convergence rate of this algorithm is asymptotically bounded from below by,
\begin{align} \label{opt:lblb_conv_pcli}
\Omega\circpar{\frac{\sqrt[p]{Q}-1}{\sqrt[p]{Q}+1}}
\end{align}
In a certain sense ,this lower bound forms a complementary result for the lower bound presented \ref{opt:sqrtlb}, due to Nemirovsky and Yudin.\\
The proof of this result is carried out by showing that it is possible to derive a lower bound on the convergence rates of $p$-CLI optimization algorithms by obtaining a lower bound on the maximal modulus root of some polynomial. Although, the range of techniques regarding upper bound for the moduli of roots of polynomials is vast, e.g. \cite{marden1966geometry,rahman2002analytic,milovanovic1994topics,walsh1922location,milovanovic2000distribution,fell1980zeros}, to the best of our knowledge, there are a very few cases where lower bounds are considered (see \cite{higham2003bounds} \footnote{In fact, our interest in maximal modulus root of polynomials is a result of trying to bound the spectral radius of 'generalized' companion matrices. As far as we know, this topic is poorly covered in the literature, as well. Few of which may be found in \cite{wolkowicz1980bounds,zhong2008bounds,horne1997lower,huang2007improving}.}). Thus, we designate part of this chapter for developing new tools which allows us to estimate the maximal modulus root of polynomials.\\
In the last section we consider the assumptions under which the lower bound shown above is tight. More precisely, we develop a novel algorithm which given a quadratic function  
\begin{align*}
f(\bx) = \frac{1}{2}\bx^\top A \bx  + \bb^\top\bx +c
\end{align*}
and the spectrum of $A$, converges to the minimizer of $f$ in a rate that is dictated by
 \ref{opt:lblb_conv_pcli}, for some prescribed $p\in\bN$. Unquestionably, knowing the spectrum of $A$ is a very strong requirement. Indeed, the chapter is concluded with a conjecture on polynomials which, if proven, would imply that for any $p\in\bN$
the worst convergence rate of certain type of efficient $p$-CLI algorithms is,
\begin{align}
\Omega\circpar{\frac{\sqrt{Q}-1}{\sqrt{Q}+1}}
\end{align}\\

In the last chapter we seek for creating new optimization algorithms by employing tools developed in previous chapters. This approach shows that Accelerated Gradient Descent method is deeply rooted in the analytic theory of polynomials. We use the very same approach to derive other well-known optimization algorithms such as the Gradient Descent and the Heavy Ball methods.\\

We conclude this monograph by presenting what we believe to be valuable research directions for future work. 




\chapter{\texorpdfstring{$p$}{p}-CLI Methods} \label{chapter:pcli_method}
The following sections cover useful notions from the field of Iterative Methods and Matrix Theory, as well as important convergence properties of Linear Iterative Methods. These tools will be used throughout this monograph. 

\section{Definitions}
For our purposes, a \emph{$p$-step Iterative Method} $\cM$ is a specification of points$$\bx^0,\bx^1,\bx^2,\dots,\bx^{p-1}\in\reals^d$$ at which a solver for some task is initiated, and rules of update 
\begin{align} \label{def:iterative_method}
\bx^{k} = \phi_{k-1}\circpar{\bx^{k-1},\bx^{k-2},\dots,\bx^{k-p}} \quad k=p,p+1,\dots..
\end{align}
where $\{\phi_k\}_{k=p}^\infty$ is a sequence of transformations, each of which may be drawn randomly according to some distribution $\Phi_k$ over a set of transformations. Iterative methods whose $\Phi_k$ are identical is said to be \emph{Stationary} \footnote{Note that it is possible to extend the definition of $\Phi_k$ so that they are dependent on the entire history $\bx^{k-1},\bx^{k-2},\dots,\bx^0$, though we will not need this generality.}.

A key observation is that any $p$-step iterative method may be reduced to a single step iterative method by increasing the \emph{Dimension of the Problem}, which will be denoted by $d$. This can be done by introducing new variables in some possibly higher-dimensional Euclidean space $\reals^{pd}$
\begin{align*}
\bz^k = \mymat{\bx^{k}\\\bx^{k+1}\\\vdots\\\bx^{k+(p-1)}} \in \reals^{pd},\qquad k=0,1,2,\dots
\end{align*}
In which case, the iterative method may be re-specified in terms of $\reals^{pd}$, which we call the \emph{Optimization Space}, by
\begin{align*}
\bz^0 = \mymat{\bx^0\\\bx^1\\\vdots\\\bx^{p-1}} \in \reals^{pd},\qquad
\bz^k = \psi_{k-1}(\bz^{k-1}) 
\eqdef \mymat{\bx^{k} \\ \bx^{k+1} \\\vdots \\ \bx^{k+(p-2)}\\ \phi_{k+(p-2)}\circpar{\bx^{k+(p-2)},\dots , \bx^{k}, \bx^{k-1} }}
\end{align*}\\

An important kind of iterative methods is the \emph{Linear Multi-step iterative Methods}, for which all $\phi_k$ are affine transformations. That is, for all $k=1,2,\dots$ and $i\in[p]$, there exist $M_{i,k-1}\in\reals^{d\times d}$ and $\bv_{k-1}\in\reals^d$ such that 
\begin{align*}
\bx^k = \phi_{k-1}\circpar{\bx^{k-1}, \bx^{k-2},\dots,\bx^{k-p}} = \sum_{i=1}^p M_{i,k-1} \bx^{k-i}+ \bv_{k-1}
\end{align*}
Casting a $p$-step iterative method which is both stationary and linear as a single step one, yields a type of iterative method called \emph{$p$-Canonical Linear Iterative Method}, abbreviated $p$-CLI method\footnote{It is worth mentioning that this term is not conventional.}. This method may be specified by some appropriate random variables $C_0,\dots, C_{p-1}\in\reals^{d \times d}$ and $\bv\in\reals^{pd}$ as,
\begin{align} \label{def:pcli}
\mymat{\bx^0\\ \vdots\\ \bx^{p-1}} \in \reals^{pd}, \quad \mymat{\bx^{k}\\\bx^{k+1}\\\vdots\\\bx^{k+(p-1)}} =
\underbrace{\mymat{0_d & I_d &&& \\&0_d & I_d&&\\ &&&&\\ && \ddots&\ddots\\&&& \\&&& 0_d & I_d\\C_{0}&&\dots&C_{p-2}&C_{p-1} }}_{M} \mymat{\bx^{k-1}\\\bx^{k}\\\vdots\\\bx^{k+(p-2)}}+\bv
\end{align} 
Equivalently,
\begin{align}\label{Eq:canonical_dynamic}
\bz^0 \in \reals^{pd}, \quad \bz^k =M\bz^{k-1}+\bv
\end{align}
In which case $M$ is called the \emph{Iteration Matrix}, $~C_0,\dots, C_{p-1}$ the \emph{Coefficients Matrices} and $\bv$ the \emph{Free Summand}. In $p$-CLI methods we assume that in each iteration $M$ and $\bv$ are drawn randomly and independently of previous realizations.

Throughout this monograph we will be mainly interested in $p$-CLI iterative methods. The next sections explore the convergence properties of methods of this nature.

\section{Basic Convergence Properties} \label{section:cli_conv_prop}
As it turns out, the convergence properties of the sequence $\circpar{\bE \bz^k}_{k=0}^\infty$, generated by a $p$-CLI iterative method, are governed by the spectral content of the iteration matrix. Therefore we shall make a slight diversion in order to introduce few elementary notions from the Spectral Theory of Matrices. \\

Let $A\in\reals^{d\times d}$ be a square matrix. If
\begin{align*}
A\bx = \lambda \bx 
\end{align*}
for some $\bx\neq0\in\bC^d$  and $\lambda\in\bC$, then we say that $\bx$ is \emph{eigenvector} of $A$ and that $\lambda$ is an \emph{eigenvalue} of $A$. Together, the pair $(\lambda,\bx)$ forms an \emph{eigenpair}.\\
The \emph{Spectrum} of $A$
\begin{align*}
\spec{A} = \left\{ \lambda\in\bC \left|~ \exists \bx\neq0\in\bC^d, A\bx=\lambda \bx  \right. \right\}
\end{align*}
 is the set of all eigenvalues of $A$. Observe that 
$\lambda\in\spec{A}$ if and only if
\begin{align*}
 \exists \bx\neq0, A\bx=\lambda \bx \iff  \exists \bx\neq0, (A-\lambda I)\bx=0 \iff \det\absval{A-\lambda I}=0
\end{align*}
Thus, $\spec{A}$ is exactly all the roots of $\chi_A(\lambda)=\det\absval{A-\lambda I}$, the \emph{Characteristic Polynomial} of $A$. Denoting the set of all the roots of $\chi_A(\lambda)$ by $\spec{\chi_A}$, we may succinctly express the last consequence by $\spec{A}=\spec{\chi_A}$.
  
 An important characteristic of the spectrum of a matrix is its \emph{Spectral Radius} defined by
\begin{align*}
	\rho(A) = \max_{\lambda\in\spec{A}} |\lambda|
\end{align*}
Likewise the radius of the polynomial $\chi_A(\lambda)$,   
\begin{align} \label{eq:poly_spec}
	\rho(\chi_A(\lambda)) = \max_{\lambda\in\spec{\chi_A}} |\lambda|
\end{align}
(Needless to say, the last definition holds for any polynomial).\\

The following matrix decomposition is a simple canonical  representation of any matrix based on its spectrum.\\
A \emph{Jordan block}  $J_k(\lambda)$ is a $k\times k$ matrix of the form,
\begin{align*}
J_k(\lambda) = \mymat{\lambda & 1 \\ & \lambda & 1 \\ && \ddots & \ddots\\ &&& \lambda & 1\\&&&&\lambda  }
\end{align*}
where unspecified entries are zeros.\\
A well-known theorem states that for any $d\times d$ square matrix $A$  there exists an invertible matrix $P$ and $k_1,\dots,k_s$ such that 
\begin{align*}
P^{-1}AP = \oplus_{i=1}^s J_{k_i}(\lambda_i),\qquad \sum_{i=1}^s k_i = d,\qquad \lambda_i\in\spec{A}
\end{align*}
Note that this decomposition is unique up to permutation of the blocks. Therefore, we may define the \emph{index} of an eigenvalue to be the size of its largest Jordan block. As the following theorem demonstrates, the convergence properties of matrices are best analyzed by Jordan form.

\begin{theorem} \label{thm:conv_rate_jord}
Let $A$ be a $d\times d$ square matrix whose spectral radius $\rho_A$ is strictly positive. Then, there exists $C_A>0$  such that for any $k\in\bN$
\begin{align*}
\norm{A^k \bu} \le C_A k^m \rho_A^k\norm{\bu} 
\end{align*}
for any $\bu\in\reals^d$.\\
Furthermore, there exist $c_A >0$ and $\br\in\reals^d$ such that for sufficiently large $k\in\bN$\begin{align*}
\norm{A^k \bu} \ge c_A k^m \rho_A^k\norm{\bu} 
\end{align*}
for any $\bu\in\reals^d$ which satisfy $\inprod{\bu}{\br}\neq0 $.\\
In both cases $m$ denotes the maximal index of eigenvalues of maximum absolute value.
\end{theorem}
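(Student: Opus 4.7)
The plan is to reduce everything to the Jordan canonical form, since the theorem just stated says that any square matrix $A$ admits a decomposition $A = PJP^{-1}$ with $J = \oplus_{i=1}^s J_{k_i}(\lambda_i)$. The key point is that $A^k = P J^k P^{-1}$, and $J^k$ is block-diagonal with blocks $J_{k_i}(\lambda_i)^k$, so analysing the asymptotics of $\|A^k\bu\|$ reduces to analysing the growth of individual Jordan block powers, up to the condition numbers $\|P\|$ and $\|P^{-1}\|$ which contribute only a constant.

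For the upper bound, I would first recall (or derive by induction on $k$) the explicit entrywise formula
\[
\bigl(J_s(\lambda)\bigr)^k_{ij} \;=\; \binom{k}{j-i}\lambda^{\,k-(j-i)} \quad \text{for } j\geq i,
\]
and zero otherwise. Each entry is therefore bounded in modulus by $\binom{k}{s-1}|\lambda|^{k-s+1}$, which grows like a constant times $k^{s-1}|\lambda|^k$. Taking the maximum over all blocks, the dominant contribution comes precisely from a block of maximal size $m$ attached to an eigenvalue of modulus $\rho_A$, giving $\|J^k\| \leq C\,k^{m}\rho_A^k$ for a suitable constant absorbing $1/(m-1)!$, $\rho_A^{-(m-1)}$, and $s$. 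Multiplying through by $\|P\|\|P^{-1}\|$ and applying $\|A^k\bu\|\leq \|P\|\|J^k\|\|P^{-1}\|\|\bu\|$ yields the claimed upper bound.

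For the lower bound, I would single out an eigenvalue $\lambda_*$ with $|\lambda_*|=\rho_A$ whose associated Jordan block attains the maximal index $m$, and track the growth along the generalised eigenvector $\bw$ sitting at the top of that chain (i.e.\ the first standard basis vector of the corresponding Jordan block in $J$-coordinates). A direct calculation using the formula above shows that the component of $J^k(P^{-1}\bu)$ along this specific coordinate equals
\[
\binom{k}{m-1}\lambda_*^{\,k-(m-1)}\,\langle \be_{m},P^{-1}\bu\rangle \;+\; \text{(strictly lower-order terms)},
\]
which asymptotically behaves like $c\,k^{m-1}\rho_A^k\,\langle \be_m, P^{-1}\bu\rangle$. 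Defining $\br$ so that $\langle \bu,\br\rangle = \langle \be_m,P^{-1}\bu\rangle$ (explicitly, $\br = (P^{-\top})\be_m$, or its real part), this quantity is nonzero precisely under the hypothesis of the theorem. Passing back through $P$ and applying the reverse triangle inequality against all subdominant Jordan blocks (whose contributions are $O(k^{m-2}\rho_A^k)$ or $O(k^{m-1}\rho^{\prime k})$ with $\rho'<\rho_A$), we obtain $\|A^k \bu\|\geq c_A k^{m-1}\rho_A^k\|\bu\|$ for $k$ large enough.

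The main obstacle is the last step — the asymptotic dominance argument in the lower bound — because when $\lambda_*\in\bC\setminus\reals$ the scalar $\lambda_*^{k-(m-1)}$ oscillates on a circle of radius $\rho_A^{k-(m-1)}$, and one must rule out accidental cancellation of the leading term against other contributions of comparable size. The standard remedies are either to work in the real Jordan form and group conjugate pairs $\lambda_*,\overline{\lambda_*}$ into a $2\times 2$ rotation-scaling block (where $\|\cdot\|$ of the pair is exactly $\rho_A^{k-(m-1)}$ regardless of the argument of $\lambda_*^{k-(m-1)}$), or to argue over $\bC^d$ and take real parts at the end; either way the oscillation affects only the constant $c_A$ and the meaning of ``sufficiently large $k$'', not the rate. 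The rest of the proof consists of routine norm estimates involving $P$ and $P^{-1}$.
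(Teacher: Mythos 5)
Your approach---pass to Jordan form $A=PJP^{-1}$, expand each block as $J_s(\lambda)^k=\sum_{j<s}\binom{k}{j}\lambda^{k-j}N^j$, isolate the top corner of a dominant block of maximal size $m$, and absorb $\norm{P}\norm{P^{-1}}$ into constants---is exactly the route the paper takes. The one mechanical difference lies in how the phase oscillation from a complex dominant eigenvalue is controlled. You suggest passing to the real Jordan form (or working over $\bC^d$ and taking real parts). The paper instead observes that, after normalizing, the coefficient attached to each dominant block sits on a circle of fixed radius depending only on $\bu$; collecting these coefficients into a vector $\balpha$ ranging over a compact set $K$ that excludes the origin whenever $r_1^\top\bu\neq0$, and noting that the corresponding columns of $P$ are linearly independent, it concludes $\min_{\balpha\in K}\norm{(q_1|\cdots|q_{|\cJ|})\balpha}>0$ by continuity and compactness. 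Both routes work; the paper's trick handles several dominant blocks of maximal size at once without any case analysis on $\arg\lambda_*$.

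There is, however, one discrepancy you should flag rather than smooth over. Your own estimate of the top entry of $J_m(\lambda)^k$ is $\binom{k}{m-1}\lambda^{k-m+1}\sim \rho_A^{1-m}\,k^{m-1}\rho_A^k/(m-1)!$, which yields the exponent $m-1$. You then write the upper bound as $Ck^m\rho_A^k$ (valid but slack) while stating the lower bound as $c_Ak^{m-1}\rho_A^k$, so your two bounds have different powers of $k$ and your lower bound is strictly weaker than the $c_Ak^m\rho_A^k$ asserted in the theorem; as written, the proof does not establish the claimed statement. In fact the exponent you computed is the correct one. The paper normalizes by $k^m\lambda_1^k$ and then treats $\binom{k}{m-1}/k^m$ as tending to a nonzero limit, but $\binom{k}{m-1}/k^m\sim 1/\bigl((m-1)!\,k\bigr)\to0$, so the paper's argument is itself off by one power of $k$ at precisely this step. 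The clean statement replaces $k^m$ by $k^{m-1}$ throughout (with $m$ still the maximal index of a dominant eigenvalue), and your calculation proves that corrected version. Make your upper and lower bounds use the same exponent $m-1$ and explicitly note the mismatch with the theorem as stated.
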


\begin{proof}
Let $P$ be a $d\times d $ invertible matrix such that 
\begin{align*}
P^{-1} A P =J
\end{align*}
where $J$ is a Jordan form of $A$, namely, $J$ is a block diagonal matrix such that $J=\oplus_{i=1}^s J_{k_i}(\lambda_i) $ where $\lambda_1,\lambda_2,\dots, \lambda_s$ are eigenvalues of $A$, whose indices 
are $m\eqdef k_1,\dots,k_s$, respectively.  w.l.og we may assume that $\absval{\lambda_1}=\rho_A$ and that $k_1 = \operatorname{index}(\lambda_1)$.
Let  $(Q_1|Q_2|\cdots|Q_s)$ and $(R_1|R_2|\cdots|R_s)$ be the partitions of  the columns of $P$ and the rows of $P^{-1}$, respectively, which conform with the Jordan blocks of $J$. \\ 
Note that for all $i\in[d]$, $J_{k_i}(0)$ is a nilpotent matrix of an order $k_i$. Therefore, for any $(\lambda_i,k_i)$ we have 
\begin{align*}
J_{k_i}(\lambda_i)^k &= (\lambda_i I_{k_i} + J_{k_i}(0) )^k  \\
&= \sum_{j=0}^k \binom{k}{j} \lambda_i^{k-j} J_{k_i}(0)^j\\
&= \sum_{j=0}^{k_i-1} \binom{k}{j} \lambda_i^{k-j} J_{k_i}(0)^j
\end{align*}
Thus,
\begin{align}
J_{k_i}(\lambda_i)^k/ (k^m \lambda_1^k )  
&= \sum_{j=0}^{k_i-1} \frac{\binom{k}{j} \lambda_i^{k-j} J_{k_i}(0)^j}{k^m \lambda_1^k} \nonumber\\
&= \sum_{j=0}^{k_i-1} \frac{\binom{k}{j} } {k^m }
\circpar{\frac{\lambda_i}{\lambda_1}}^k
 \frac{J_{k_i}(0)^j }{\lambda_i^j} \label{eq:jord1}
\end{align}
\\
The proof revolves around the following equality. For any $\bu\in\reals^{pd}$ it holds that 
\begin{align}
\norm{A^k \bu } 
&= \norm{P J^k P^{-1}\bu} \nonumber \\
&= \norm{\sum_{i=1}^s Q_i J_{k_i}(\lambda_i)^k R_i \bu}  \nonumber\\
&= k^m \rho_A^k \norm{\sum_{i=1}^s Q_i\circpar{ J_{k_i}(\lambda_i)/(k^m \lambda_1^k) }R_i \bu}
\end{align}
Plugging in \ref{eq:jord1} yields, 
\begin{align}
\norm{A^k \bu } 
&= k^m \rho_A^k \norm{\underbrace{\sum_{i=1}^s Q_i\circpar{\sum_{j=0}^{k_i-1} \frac{\binom{k}{j} } {k^m }
\circpar{\frac{\lambda_i}{\lambda_1}}^k
 \frac{J_{k_i}(0)^j }{\lambda_i^j} }R_i \bu}_{\bw_k} } \label{eq:jord_main}
\end{align}
Evidently, In order to establish the convergence properties of $A$ it suffices to show that $\{\bw_k\}_{k=1}^\infty$  is a sequence of vectors whose norm can be bounded from above and can be bounded away from zero. \\
\\
Deriving the aforementioned upper bound is straightforward. Indeed,
\begin{align}
\norm{\bw_k} &\le 
\sum_{i=1}^s  \norm{Q_i\circpar{\sum_{j=0}^{k_i-1} \frac{\binom{k}{j} } {k^m }
\circpar{\frac{\lambda_i}{\lambda_1}}^k
 \frac{J_{k_i}(0)^j }{\lambda_i^j} }R_i \bu}\nonumber \\
 &\le \norm{ \bu}
\sum_{i=1}^s  \norm{Q_i}\norm{R_i} \sum_{j=0}^{k_i-1}\norm{ \frac{\binom{k}{j} } {k^m }
\circpar{\frac{\lambda_i}{\lambda_1}}^k
 \frac{J_{k_i}(0)^j }{\lambda_i^j }} \label{eq:tmp2}
\end{align}
Since for all $i\in [d]$ we have
\begin{align*}
\frac{\binom{k}{j} } {k^m }
\circpar{\frac{\lambda_i}{\lambda_1}}^k \to 0 \quad \text{ or } \quad \frac{\binom{k}{j} } {k^m }
\circpar{\frac{\lambda_i}{\lambda_1}}^k =1
\end{align*}
it holds that \ineqref{eq:tmp2} can be bounded from above by some positive scalar $C_A$. Plugging it in into \ref{eq:jord_main} yields
\begin{align*}
\norm{A^k \bu } \le C_A k^m \rho_A^k \norm{\bu}
\end{align*}
\\
Deriving a lower bound on the norm of $\{\bw_k\}$ is a bit more involved. First, we define the following set of Jordan blocks which govern the asymptotic behavior of $\norm{\bw_k}$ 
\begin{align*}
\cI \eqdef  \myset{ i\in[s] }{ \absval{\lambda_i}=\rho_A \text{ and }  k_i = m }
\end{align*}
\eqref{eq:jord1} implies that for all $i\notin\cI$ 
\begin{align*}
J_{k_i}(\lambda_i)^k / (k^m \lambda_1^k ) \to 0  \text{ as } k\to \infty. 
\end{align*}
As for $i\in\cI$, the first $k_i-1$ terms in \eqref{eq:jord1} tend to zero. The last term is a matrix whose entries tend to zero, except for the last entry in the first row which equals
\begin{align*}
\frac{\binom{k}{m-1} } {k^m }
\circpar{\frac{\lambda_i}{\lambda_1}}^k
1/(\lambda_i^{m-1}) 
&\approx
\circpar{\frac{\lambda_i}{\lambda_1}}^k
1/(\lambda_i^{m-1}) 
\end{align*}
By denoting the first column of each $Q_i$ by $q_i$ and the last row in each $R_i$ by $r_i^\top$, we get
\begin{align*}
\norm{\bw_k} &\approx 
\norm{\sum_{i\in\cJ} \circpar{\frac{\lambda_i}{\lambda_1}}^k
\frac{1}{\lambda_i^{m-1}} Q_i J_{m}(0)^{m-1} R_i \bu}\\
&=
\norm{\sum_{i\in\cJ} \circpar{\frac{\lambda_i}{\lambda_1}}^k
\frac{r_i^\top  \bu}{\lambda_i^{m-1}}  q_i }\\
&=  \norm{
\circpar{ \begin{array}{c|c|c|c} &&\\&&\\ q_1&q_2&\cdots&q_{|\cJ|} \\&&\\&&\\\end{array}} 
\mymat{ \frac{r_1^\top  \bu}{\lambda_1^{m-1}}    \\
\circpar{\frac{\lambda_2}{\lambda_1}}^k
\frac{r_2^\top  \bu}{\lambda_2^{m-1}}  \\ \vdots \\ 
\circpar{\frac{\lambda_{|\cJ|}}{\lambda_1}}^k
\frac{r_{|\cJ|}^\top  \bu}{\lambda_{|\cJ|}^{m-1}}  
}}\\
&\ge \min_{\balpha\in K} \norm{\circpar{ q_1|q_2|\cdots|q_{|\cJ|}} \balpha} 
\end{align*}
Where $K$ denotes the compact set $$K\eqdef\myset{\balpha \in\reals^{|\cJ|}}{ \forall i,~|\alpha_i|=\frac{r_i^\top  \bu}{\lambda_i^{m-1}}  }$$
Now, if $\bu$ satisfies $r_1^\top\bu\neq0$ then this implies that $0\notin K$. Furthermore, since $\circpar{ q_1|q_2|\cdots|q_{|\cJ|}}$ is of full rank it holds for the following continuous function that
\begin{align*}
\circpar{ q_1|q_2|\cdots|q_{|\cJ|}} \balpha\neq 0,\quad\forall\balpha\in K.
\end{align*}
The extrema points of continuous function over compact set are always attainable, therefore 
\begin{align}
\min_{\balpha\in K} \norm{\circpar{ q_1|q_2|\cdots|q_{|\cJ|}} \balpha} >0
\end{align} 
By which we conclude that there exists $c_A>0$ such that $\norm{\bw_k}>c_A$ for sufficiently large $k$. Plugging it in into \eqref{eq:jord_main}  yields
\begin{align*}
\norm{A\bu} \ge c_A k^m \rho_A^k\norm{\bu} 
\end{align*}
for any $\bu\in\reals^d$ such that $\inprod{\bu}{\br_1}\neq0$ and for sufficiently large $k$.
\end{proof}

An immediate corollary of \thmref{thm:conv_rate_jord} is that if a $p$-CLI method converges then its asymptotic convergence rate must be linear, i.e. the number of the correct bits of the current approximation grows linearly - unless the spectral radius of its iteration matrix is 0. In which case the asymptotic convergence rate is  \emph{quadratic}, meaning that the number of correct bits grows quadratically. The latter case is out of the scope of this work.

A matrix $A$ is \emph{Convergent} if  $\lim_{k\to\infty}A^k=0$. 
The following is a direct corollary of \thmref{thm:conv_rate_jord}.
\begin{corollary} \label{cor:conv_equi}
Let $A$ be a square matrix. Then the following statements are equivalent:
\begin{itemize}
\item $\rho(A)<1$.
\item $\lim_{k\to\infty}A^k=0$.
\item The Neuman series $\sum_{k=0}^\infty A^k$ converges.
\end{itemize}
In which case, $(I-A)^{-1}$ exists and $(I-A)^{-1}=\sum_{k=0}^\infty A^k$.
\end{corollary}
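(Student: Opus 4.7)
The plan is to establish the three equivalences together with the Neumann formula by leveraging Theorem \ref{thm:conv_rate_jord} to compare $\rho(A)$ with the asymptotics of $\|A^k \bu\|$, and then invoking the telescoping identity $(I-A)\sum_{k=0}^n A^k = I - A^{n+1}$ to relate $A^k$ to the partial sums. Concretely, I would prove $\rho(A)<1 \Leftrightarrow \lim_{k\to\infty}A^k=0$ first, and then use that equivalence to handle the Neumann series together with the explicit inverse formula.

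For $\rho(A)<1 \Rightarrow \lim_{k\to\infty} A^k = 0$, the upper bound of Theorem \ref{thm:conv_rate_jord} gives $\|A^k \bu\| \le C_A k^m \rho_A^k \|\bu\|$ for every $\bu$ when $\rho_A>0$, and the right-hand side vanishes since exponential decay dominates polynomial growth. The edge case $\rho_A = 0$ is not covered by the theorem but is trivial: all eigenvalues are $0$, so the Jordan form shows $A$ is nilpotent of order at most $d$, whence $A^k = 0$ for $k\ge d$. For the converse, I argue by contrapositive: if $\rho(A)\ge 1$, apply the lower bound of Theorem \ref{thm:conv_rate_jord} to obtain a direction $\br$ and pick any $\bu$ with $\inprod{\bu}{\br}\neq 0$; then $\|A^k \bu\| \ge c_A k^m \rho_A^k \|\bu\|$ for all large $k$, and since $k^m\rho_A^k\ge 1$ the sequence $A^k\bu$ cannot tend to $0$.

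For the equivalence with convergence of $\sum_{k=0}^\infty A^k$ and the formula for $(I-A)^{-1}$, one direction is automatic: if the Neumann series converges, its general term $A^k$ must go to zero. For the other direction, suppose $A^k \to 0$; by the equivalence just proved we have $\rho(A)<1$, in particular $1\notin\spec{A}$, so $I-A$ is invertible. Multiplying the telescoping identity by $(I-A)^{-1}$ yields
\begin{align*}
\sum_{k=0}^n A^k = (I-A)^{-1}(I - A^{n+1}),
\end{align*}
and letting $n\to\infty$ while using $A^{n+1}\to 0$ gives simultaneously the convergence of the Neumann series and the identity $(I-A)^{-1} = \sum_{k=0}^\infty A^k$.

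The main obstacle is bookkeeping rather than any substantive mathematical difficulty: Theorem \ref{thm:conv_rate_jord} is stated only for $\rho_A>0$, so the nilpotent case must be handled by a short separate argument, and the invertibility of $I-A$ needs to be deduced from $\rho(A)<1$ (via $1\notin\spec{A}$) before the telescoping identity can be turned into a statement about the inverse.
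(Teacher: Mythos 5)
Your proposal is correct and takes essentially the same approach as the paper: use Theorem~\ref{thm:conv_rate_jord} to establish the equivalence $\rho(A)<1 \Leftrightarrow A^k\to 0$, then use the telescoping identity $(I-A)\sum_{k=0}^{n}A^k = I-A^{n+1}$ to handle the Neumann series and the inverse formula. The paper's proof is essentially a two-line sketch of exactly this; you have simply filled in the details, and your explicit handling of the nilpotent case $\rho_A=0$ (which Theorem~\ref{thm:conv_rate_jord} technically does not cover) is a small but worthwhile completion that the paper leaves implicit.
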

\begin{proof}
The first two statements are readily implied by \thmref{thm:conv_rate_jord}. The rest can be proved using the following identity
\begin{align}
(I-A)\sum_{k=0}^{m-1} A^k = I-A^{m}
\end{align}
which holds for any $m\in\bN$.
\end{proof}
Note that any norm that the real vector space $\reals^{d\times d}$ might be endowed with induces the same topology. Hence, we may freely use notions of convergence of sequences and series of matrices without specifying which norm is being used.\\

Establishing tools which allow close examination of the convergence properties of general square matrices, we may now derive the following characterization for convergence of $p$-CLI methods.

\begin{theorem} \label{thm:conv_cli}
Let $\cM$ be a CLI iterative method whose iteration matrix and free summand, $M$ and $\bv$, are drawn randomly according to some distribution $\cD_{pd\times pd}$ over $pd\times pd$ matrices and some distribution $\cD_{pd\times 1}$ over $pd$-dimensional vectors, respectively.\\
Then $\circpar{\bE \bz^k}_{k=0}^\infty$  converges for any $\bz^0\in\reals^d $ if and only if $\rho(\bE_{D_{pd}}[ M])<1$.\\
In which case, we have that for any $\bz^0\in\reals^d,\quad \bE \bz^k \to \bz^*\eqdef \circpar{I-\bE M}^{-1}\bE \bv $. That is, the limit of this process is the fix point of the expected update rule.
\end{theorem}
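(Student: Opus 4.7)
The strategy is to reduce the stochastic recurrence to a deterministic affine one by taking expectations, and then to invoke the convergence characterisation of Corollary~\ref{cor:conv_equi} for the sufficiency direction, and the lower bound in Theorem~\ref{thm:conv_rate_jord} for the necessity direction. Concretely, set $\bar M := \bE M$ and $\bar v := \bE \bv$. Because the $p$-CLI model draws $(M,\bv)$ at each iteration independently of prior realisations, $M$ at step $k$ is independent of $\bz^{k-1}$, so $\bE[M\bz^{k-1}] = \bar M\,\bE\bz^{k-1}$. Taking expectations in \eqref{Eq:canonical_dynamic} thus collapses the stochastic recursion to the deterministic affine one $\bE\bz^{k} = \bar M\,\bE\bz^{k-1} + \bar v$, which unrolls to
\[
\bE\bz^{k} \;=\; \bar M^{k}\bz^{0} \;+\; \sum_{j=0}^{k-1} \bar M^{j}\bar v.
\]

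For the sufficiency direction, assume $\rho(\bar M)<1$. Corollary~\ref{cor:conv_equi} immediately yields $\bar M^{k}\to 0$ and $\sum_{j=0}^{\infty}\bar M^{j} = (I-\bar M)^{-1}$; plugging these into the displayed formula gives $\bE\bz^{k}\to (I-\bar M)^{-1}\bar v$. A direct substitution shows that this limit satisfies $\bz^{*} = \bar M\bz^{*} + \bar v$, confirming that it is the (unique) fixed point of the expected update rule, as asserted.

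For the necessity direction I argue by contrapositive and suppose $\rho(\bar M)\ge 1$. Subtracting the two expected trajectories initialised at $\bz^{0}=\bu$ and $\bz^{0}=\mathbf{0}$ gives $\bar M^{k}\bu$, so convergence of $\bE\bz^{k}$ for \emph{every} $\bz^{0}$ would force $\bar M^{k}\bu$ to converge for every $\bu\in\reals^{pd}$. The lower-bound part of Theorem~\ref{thm:conv_rate_jord} supplies a direction $\br\in\reals^{pd}$ such that $\norm{\bar M^{k}\bu}\ge c_{A}\,k^{m}\rho_{A}^{k}$ for all $\bu$ with $\inprod{\bu}{\br}\neq 0$ and all sufficiently large $k$; this quantity fails to converge whenever $\rho_{A}>1$ (it blows up), and also whenever $\rho_{A}=1$ with $m\ge 1$. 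The main obstacle is the residual edge case $\rho_{A}=1$ with $m=0$ and all unit-modulus eigenvalues equal to $1$ (i.e.\ $1\in\spec{\bar M}$ semi-simply); this is closed off by observing that $I-\bar M$ is then singular, so the limit $\bz^{*}$ would have to solve $(I-\bar M)\bz^{*}=\bar v$ for the given $\bar v$, contradicting the fixed-point identity for generic $\bar v$ (or equivalently: under any perturbation of $\bar v$ off the range of $I-\bar M$, a limit cannot exist). Combining these cases rules out $\rho(\bar M)\ge 1$, which completes the contrapositive and hence the theorem.
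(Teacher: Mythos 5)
Your argument mirrors the paper's: both reduce the stochastic recursion to the deterministic affine one $\bE\bz^{k}=\bE[M]^{k}\bz^{0}+\bigl(\sum_{j=0}^{k-1}\bE[M]^{j}\bigr)\bE\bv$ via independence and linearity, and both invoke \corref{cor:conv_equi} for the sufficiency direction and for identifying the limit as $(I-\bE M)^{-1}\bE\bv$. Where you diverge is in the necessity direction, and here you are actually more careful than the paper: the paper simply observes that $\bE[M]^{k}\bz^{0}$ converges for all $\bz^{0}$ and then asserts \emph{``by which we conclude $\rho(\bE M)<1$''}, silently skipping the semisimple unit-eigenvalue case. You correctly isolate $\rho=1$ with no nontrivial Jordan blocks and all unit-modulus eigenvalues equal to $1$ as the genuinely delicate case.

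However, the way you close that case does not hold up. You argue that a limit cannot exist ``under any perturbation of $\bar v$ off the range of $I-\bar M$,'' but $\bar v=\bE\bv$ is not a free parameter you get to perturb --- it is fixed by the given distribution $\cD_{pd\times 1}$, and the theorem quantifies only over $\bz^{0}$. If $\bar v$ happens to lie in the range of $I-\bar M$ (for instance $\bar M=I,\ \bar v=0$, giving $\bE\bz^{k}\equiv\bz^{0}$), then $(\bE\bz^{k})$ converges for every $\bz^{0}$ even though $\rho(\bE M)=1$, so neither your perturbation argument nor the paper's terse assertion actually rules this out. The honest reading is that the ``only if'' direction, as literally stated, needs a nondegeneracy hypothesis (e.g.\ $\bE\bv\notin\operatorname{range}(I-\bE M)$ when $1\in\spec{\bE M}$, or simply reinterpreting ``converges'' as ``converges to the fixed point, which requires $I-\bE M$ invertible''). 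In short: same route as the paper, you fill in the lower-bound machinery from \thmref{thm:conv_rate_jord} that the paper leaves implicit, you correctly flag the problematic edge case, but your attempted closure of it is not valid --- and in fact no closure is possible without an additional assumption, a defect the paper's proof also inherits.
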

\begin{proof} 
Fix $k\in\bN$ and denote by $M^{(0)},\dots,M^{(k-1)}$ the first $k$ randomly drawn iteration matrices
and by $\bv^{(0)},\dots,\bv^{(k-1)}$ the first $k$ randomly drawn free summands.
By \eqref{Eq:canonical_dynamic} we get 
\begin{align*}
\bz^1 &= M^{(0)} \bz^0 + \bv^{0}\\
\bz^2 &= M^{(1)} \bz^1 + \bv^{1} \\
&=M^{(1)} M^{(0)} \bz^0 + M^{(1)}\bv^{0}+ \bv^{1}\\
\bz^3 &= M^{(2)} \bz^2 + \bv^{2} \\
&=M^{(2)}M^{(1)} M^{(0)} \bz^0 + M^{(2)}M^{(1)}\bv^{0}+ M^{(2)}\bv^{1} +\bv^{2}\\
\vdots\\
\bz^k &=  \prod_{j=0}^{k-1} M^{(j)}\bz^0 + \sum_{m=1}^{k-1} \prod_{j=m}^{k-1} M^{(j)}\bv^{m-1} + \bv^{k-1}\\
&=  \prod_{j=0}^{k-1} M^{(j)}\bz^0 + \sum_{m=1}^{k} \circpar{ \prod_{j=m}^{k-1} M^{(j)} }\bv^{m-1} 
\end{align*}
Where, by convention, we set an empty product to be the identity matrix and define the order of multiplication of factors of abbreviated product notation to be carried out from the highest index to the lowest, i.e. $\prod_{j=1}^k M^{(j)} = M^{(k)}\cdots  M^{(1)} $. \\
Now, by the linearity of the expectation operator and by the independency structure of the iteration matrix and the inversion matrix we get a more explicit expression for $\bE \bz^k$,
\begin{align}
\bE \bz^k &= \bE [M]^k \bz^0+ \circpar{\sum_{j=0}^{k-1} \bE [M]^j}  \bE \bv \label{EqLine:e25}
\end{align}
Where the expectation is taken over $(\cD_{pd\times pd })^k$ and $(\cD_{pd\times 1})^k$, accordingly. For the sake of brevity, we will omit these subscripts from the expectation operator in the reminder of the work.\\
\\
By \corref{cor:conv_equi} we have that $\sum_{l=0}^{\infty} \bE [M]^l$ converges if and only if $\rho(\bE [ M])<1$. Thus, clearly if $\rho(\bE [ M])<1$ then $\circpar{\bE \bz^k}_{k=0}^\infty$ converges for any $\bz^0\in\reals^d$.\\
\\
Now, assume that $\circpar{\bE \bz^k}_{k=0}^\infty$ converges for any $\bz^0\in\reals^d$. The second summand in \eqref{EqLine:e25} must converge, for otherwise, by taking $\bz^0 =0$ we get that $\circpar{\bE \bz^k}_{k=0}^\infty$ diverges.
This, in turn, implies that for any $\bz^0$, the sequence $\bE[M]^k \bz^0$, being a difference of two convergent series, also converges. By which we conclude $\rho(\bE [M])<1$.\\
\\
In case of convergence, we may use \corref{cor:conv_equi} again together with \eqref{EqLine:e25} to obtain,
\begin{align*}
\bE \bz^k = \bE [M]^k \bz^0+ \circpar{\sum_{l=0}^{k-1} \bE [M]^i}  \bE \bv \to \circpar{I-\bE M}^{-1}\bE \bv 
\end{align*}
\end{proof}

We now turn to a more delicate analysis of the convergence properties of $p$-CLI methods. First, we define the \emph{Spectrum} of a $p$-CLI iterative method $\cM$ 
$$ \spec{\cM}=\spec{\bE M}$$
and its \emph{Spectral Radius}
$$\rho_{\cM} \eqdef \rho(\bE M)$$
Naturally, A $p$-CLI whose spectral radius is strictly smaller than one is said to be \emph{Convergent}. \\
After deriving a condition for convergence of $p$-CLI method, an equally important question arises: What is the convergence rate of a given convergent $p$-CLI method? Put differently, how fast does $\norm{\bE \left[\bz^k\right]-\bz^*}$ vanish
\footnote{A word of caution - whereas we use an analytically more convenient interpretation of the error magnitude $\norm{\bE \left[\bz^k\right]-\bz^*}$, the 'slightly' different measure $\bE \left[\norm{\bz^k-\bz^*}\right]$ is unquestionably more interesting.}
?\\
The next theorem gives a complete answer for this question.

\begin{theorem} \label{thm:conv_rate}
Let $\cM$ be a convergent $p$-CLI iterative method whose spectral radius $\rho_M$ does not vanish, and let us denote its expected limit point by $\bz^*$. \\
Then, there exist $C_\cM>0$ such that for all $\bz^0 \in \reals^d$ it holds that 
\begin{align*}
\forall k\in\bN,~\norm{\bE \left[\bz^k\right]-\bz^*} \le C_\cM k^m \rho_M^k\norm{\bz^0-\bz^*} 
\end{align*}
Furthermore, there exist $c_\cM>0$ and $\br\in\reals^d$ such that for  any $\bz^0\in\reals^d$ which satisfy $\inprod{\bE [\bz^0]-\bz^*}{\br}\neq0 $, and for sufficiently large $k\in\bN$ it holds that
\begin{align*} 
\norm{\bE \left[\bz^k\right]-\bz^*} \ge c_\cM k^m \rho_A^k\norm{\bz^0-\bz^*} 
\end{align*}
In both cases $m$ denotes the maximal index of eigenvalues of maximum absolute value.
\end{theorem}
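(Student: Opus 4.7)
The plan is to reduce the statement directly to \thmref{thm:conv_rate_jord} applied to the matrix $A = \bE[M]$. The crucial preliminary step is to obtain a clean expression for the error $\bE[\bz^k] - \bz^*$ as a matrix power acting on the initial displacement from the fixed point, after which the spectral estimates are already in hand.

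Starting from the closed form
\[
\bE[\bz^k] \;=\; \bE[M]^k \bz^0 + \left(\sum_{j=0}^{k-1} \bE[M]^j\right) \bE \bv
\]
derived in the proof of \thmref{thm:conv_cli}, I would use the fixed point identity $\bE \bv = (I - \bE M)\bz^*$, which is legitimate because convergence of $\cM$ means $\rho_\cM < 1$ and hence, by \corref{cor:conv_equi}, $I - \bE M$ is invertible. Applying the telescoping identity $\left(\sum_{j=0}^{k-1} \bE[M]^j\right)(I - \bE[M]) = I - \bE[M]^k$ then collapses the sum and yields the clean relation
\[
\bE[\bz^k] - \bz^* \;=\; \bE[M]^k (\bz^0 - \bz^*).
\]

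With this identity in hand, both claims fall out of \thmref{thm:conv_rate_jord} applied to $A = \bE[M]$ with $\bu = \bz^0 - \bz^*$. The upper bound on $\norm{A^k \bu}$ in that theorem gives $\norm{\bE[\bz^k] - \bz^*} \le C_\cM k^m \rho_\cM^k \norm{\bz^0 - \bz^*}$ upon setting $C_\cM := C_{\bE[M]}$, and the companion lower bound yields the matching asymptotic inequality whenever $\inprod{\bz^0 - \bz^*}{\br} \neq 0$, where $\br$ is the vector produced there. Since the starting vector $\bz^0$ is deterministic, $\bE[\bz^0] = \bz^0$, so the hypothesis in the theorem statement matches the hypothesis in \thmref{thm:conv_rate_jord} exactly.

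I do not foresee a substantial obstacle here: all of the spectral analysis is already packaged in \thmref{thm:conv_rate_jord}, and the only genuinely new content is the algebraic manipulation that eliminates $\bE \bv$ in favor of the displacement from the fixed point. The only minor bookkeeping point worth verifying is that the same Jordan data of $\bE[M]$ governs both bounds, so that the exponent $m$ (the maximal index among eigenvalues of maximum modulus) and the base $\rho_\cM = \rho(\bE[M])$ appearing in the two inequalities are literally the same.
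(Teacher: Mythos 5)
Your proposal is correct and follows essentially the same route as the paper's proof: both derive the identity $\bE[\bz^k] - \bz^* = \bE[M]^k(\bz^0 - \bz^*)$ from the closed form in \thmref{thm:conv_cli} (you substitute $\bE\bv = (I-\bE M)\bz^*$ and telescope, the paper inserts $(I-\bE M)^{-1}(I-\bE M)$ and telescopes, which is the same algebra) and then invoke \thmref{thm:conv_rate_jord} on $A = \bE[M]$ with $\bu = \bz^0 - \bz^*$. Your remark that $\bE[\bz^0] = \bz^0$ for a deterministic initialization cleanly reconciles the two hypothesis formats.
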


\begin{proof}
Recall that by \eqref{EqLine:e25},
\begin{align*}
\bE \bz^k &= \bE [M]^k \bz^0+ \circpar{\sum_{l=0}^{k-1} \bE [M]^i}  \bE \bv \\
&= \bE [M]^k \bz^0+ \circpar{I-\bE M}^{-1}\circpar{I-\bE M}\circpar{\sum_{l=0}^{k-1} \bE [M]^i}  \bE \bv \\
&= \bE [M]^k \bz^0+ \circpar{I-\bE M}^{-1}\circpar{I-\bE[M]^k }  \bE \bv 
\end{align*} 
Moreover, by \thmref{thm:conv_cli} we have
\begin{align*}
\bz^*= \circpar{I-\bE M}^{-1}\bE \bv
\end{align*}
Thus, 
\begin{align} \label{eq:conv_eq2}
\norm{\bE \bz^k - \bz^*} 
&= \norm{\bE [M]^k \bz^0+ \circpar{I-\bE M}^{-1}\circpar{I-\bE[M]^k }  \bE \bv  - \circpar{I-\bE M}^{-1}\bE \bv} \nonumber\\
&= \norm{\bE [M]^k \bz^0- \circpar{I-\bE M}^{-1}\bE[M]^k   \bE \bv  } \nonumber\\
&= \norm{\bE [M]^k \circpar{ \bz^0- \circpar{I-\bE M}^{-1}\bE \bv } } \nonumber\\
&= \norm{\bE [M]^k \circpar{ \bz^0- \bz^* } } 
\end{align} \\
Applying theorem \thmref{thm:conv_rate_jord} on $\bE[M]$ concludes the proof
\end{proof}

\begin{remark}
One may access a much wider family of iterative methods by omitting the assumption regarding the stationarity of the distributions of the iterations matrices. This relaxation leads to a measure called the \emph{Joint Spectral Radius} defined for a set of matrices $M=\left\{A_{i_1},\dots,A_{i_k}\right\}\subseteq \reals^{d\times d}$ by,
\begin{align}
\rho(M) =\lim_{k\to\infty} \max \left\{  \norm{ A_{i_1}\cdots A_{i_k}}^{1/k} : A_i\in M \right\}
\end{align}
In spite of the generality of this approach,  there are negative theoretical results which show that the joint spectral radius is very hard to compute or to approximate.  
\end{remark}

\begin{remark}
Similar results may be obtained for differentiable nonlinear iterative methods by applying the same techniques used in the last section using gradients (e.g. \cite{polyak1987introduction}).  
\end{remark}

\section{Spectrum} \label{section_cano}
By now it should be evident why the spectral radius of $p$-CLI methods is crucial in determining their convergence rates. The next section is exclusively devoted for analyzing the spectral radius of $p$-CLI methods by inspecting characteristic polynomials of iteration matrices.\\

Let $\cM$ be a $p$-CLI method over $\reals^d$ and denote its expected iteration matrix by $M$. When there is no of risk of disambiguation we omit the expectation operator symbol, e.g. in this section $M$ replaces $\bE M$. According to \thmref{thm:conv_rate}, in order to bound its convergence rate, we need to bound its spectral radius $\rhoM$. To this end, recall that
\begin{align*}
M = \mymat{0_d & I_d &&& \\&0_d & I_d&&\\ &&&&\\ && \ddots&\ddots\\&&& \\&&& 0_d & I_d\\C_{0}&&\dots&C_{p-2}&C_{p-1} }
\end{align*} 
for some $C_0,\dots,C_{p-1}\in\reals^d$. \\

The following lemma provides an explicit expression for $\chiM(\lambda)$, the characteristic polynomial of $M$. It is worth pointing out that this lemma is proven in a straightforward manner, although there are other more compact, but perhaps less direct, proofs.

\begin{lemma} \label{lemma:mat_pol_spec}
Using the notation above, we have, 
\begin{align}
\chiM(\lambda) =  (-1)^{pd} \det\circpar{\lambda^p I_d - \sum_{k=0}^{p-1} \lambda^{k} C_k}
\end{align}
\end{lemma}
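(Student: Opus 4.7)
The plan is to compute $\det(\lambda I_{pd}-M)$ directly via block column operations, then use $\chi_M(\lambda)=\det(M-\lambda I)=(-1)^{pd}\det(\lambda I-M)$ to obtain the stated identity.

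Writing $\lambda I_{pd}-M$ as a $p\times p$ array of $d\times d$ blocks yields $\lambda I_d$ on the main diagonal (with $\lambda I_d-C_{p-1}$ in position $(p,p)$), $-I_d$ on the superdiagonal, and $-C_{j-1}$ in the bottom-row positions $(p,j)$ for $j<p$. I would then sequentially, for $k=1,2,\ldots,p-1$, add $\lambda^{k}$ times block column $k+1$ to block column $1$. By induction on $k$, after the $k$th step block column $1$ has zeros in block rows $1,\ldots,k$, the entry $\lambda^{k+1}I_d$ in block row $k+1$ (when $k<p-1$), and the partial sum $-\sum_{j=0}^{k}\lambda^{j}C_{j}$ in block row $p$. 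The inductive step is immediate: the $-I_d$ at row $k$ of block column $k+1$ cancels the running $\lambda^{k}I_d$, the $\lambda I_d$ at row $k+1$ produces the next $\lambda^{k+1}I_d$, and the $-C_{k}$ at row $p$ extends the partial sum. At the terminal step $k=p-1$, the diagonal entry $\lambda I_d - C_{p-1}$ additionally contributes the leading $\lambda^{p}I_d$; so the final block column $1$ is supported only at row $p$, where it equals $\lambda^{p}I_d-\sum_{k=0}^{p-1}\lambda^{k}C_{k}$.

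Next I would cyclically permute the block columns to move the modified block column $1$ to position $p$. The resulting matrix is block lower triangular, with block diagonal $(-I_d,\ldots,-I_d,\lambda^{p}I_d-\sum_{k=0}^{p-1}\lambda^{k}C_{k})$ consisting of $p-1$ copies of $-I_d$ followed by $\lambda^p I_d - \sum_{k=0}^{p-1}\lambda^k C_k$. Its determinant is therefore $(-1)^{d(p-1)}\det\!\left(\lambda^{p}I_d-\sum_{k=0}^{p-1}\lambda^{k}C_{k}\right)$. The cyclic shift of $d$ columns through $pd$ positions decomposes into $d$ disjoint $p$-cycles, contributing a sign $(-1)^{d(p-1)}$; this cancels exactly the sign from the block diagonal, giving $\det(\lambda I_{pd}-M)=\det\!\left(\lambda^{p}I_d-\sum_{k=0}^{p-1}\lambda^{k}C_{k}\right)$. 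Multiplying by $(-1)^{pd}$ delivers the claim.

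The only real friction is bookkeeping: the inductive tracking of block column $1$ through the $p-1$ operations, and the sign accounting for the block-column permutation (both $(-1)^{d(p-1)}$ factors must appear so they cancel, and losing one of them breaks the identity). Everything else---the block-triangular determinant, and the step from $\det(\lambda I-M)$ to $\chi_M(\lambda)$---is entirely routine.
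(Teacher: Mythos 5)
Your proof is correct, and it follows the same fundamental strategy as the paper's (block Gaussian elimination to triangularize the relevant block matrix), but the details differ in a way that is worth noting. The paper works with $\det(M-\lambda I_{pd})$ and performs block \emph{row} operations on the bottom block row, successively absorbing $C_0,C_1,\dots$ into higher columns; these operations involve multiplication by $\lambda^{-1}$, so the paper must assume $\lambda\neq 0$ and handle $\lambda=0$ by a separate (brief) argument. You instead work with $\det(\lambda I_{pd}-M)$ and perform block \emph{column} operations that only ever multiply by nonnegative powers of $\lambda$, so the computation is valid uniformly, including at $\lambda=0$, with no special case. The price you pay is an extra step: after your column operations the matrix is not yet triangular, so you need the cyclic block-column permutation and the accompanying sign count $(-1)^{d(p-1)}$, whereas the paper lands directly on a block upper triangular matrix. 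Both sign counts in your argument (the permutation sign and the $p-1$ diagonal blocks $-I_d$) are correct and do cancel as you claim. In short: same idea, slightly more bookkeeping on your side, but a cleaner treatment of $\lambda=0$.
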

\begin{proof}
For $\lambda\neq0$ we get, 
\begin{align*}
\chiM(\lambda) &= \det(M-\lambda I_{pd}) \\
&= \det\circpar{\begin{array}{cccc|c} -\lambda I_d & I_d &&& \\&-\lambda I_d & I_d&&\\ &&&&\\ && \ddots&\ddots\\&&& \\&&& -\lambda I_d & I_d\\ \hline C_{0}&&\dots&C_{p-2}&C_{p-1}-\lambda I_d \end{array}}
\end{align*}
\begin{align*}
&= 
\det\circpar{\begin{array}{cccc|c} -\lambda I_d & I_d &&& \\&-\lambda I_d & I_d&&\\ &&&&\\ && \ddots&\ddots\\&&& \\&&& -\lambda I_d & I_d\\ \hline 0_d&C_{1}+\lambda^{-1}C_0&\dots&C_{p-2}&C_{p-1}-\lambda I_d \end{array}}\\
&= \det\circpar{\begin{array}{cccc|c} -\lambda I_d & I_d &&& \\&-\lambda I_d & I_d&&\\ &&&&\\ && \ddots&\ddots\\&&& \\&&& -\lambda I_d & I_d\\ \hline 0_d&0_d&
C_2 + \lambda^{-1}C_{1}+\lambda^{-2}C_0\dots &C_{p-2}&C_{p-1}-\lambda I_d \end{array}}\\
&= \det\circpar{\begin{array}{cccc|c} -\lambda I_d & I_d &&& \\&-\lambda I_d & I_d&&\\ &&&&\\ && \ddots&\ddots\\&&& \\&&& -\lambda I_d & I_d\\ \hline 0_d&\dots &0_d&&
\sum_{k=1}^p \lambda^{k-p} C_{k-1} -\lambda I_d \end{array}}\\
&= \det(-\lambda I_d)^{p-1} \det\circpar{\sum_{k=1}^p \lambda^{k-p} C_{k-1} -\lambda I_d }
\end{align*}
\begin{align*}
&= (-1)^{(p-1)d} \det\circpar{\sum_{k=1}^p \lambda^{k-1} C_{k-1} -\lambda^p I_d}\\
&= (-1)^{pd} \det\circpar{\lambda^p I_d - \sum_{k=0}^{p-1} \lambda^{k} C_k}
\end{align*}
For $\lambda=0$, it is clear that $\chiM(0)=0$ if and only if $\det(C_0)=0$. Thus, the equality holds for all $\lambda$.
\end{proof}

Let $L(\lambda)$ denote the following \emph{Matrix Polynomial}, i.e. a polynomial whose coefficients are matrices, 
\begin{align} \label{eq:char_matrix_polynomial}
L(\lambda)= \lambda^p I_d - \sum_{k=0}^{p-1} \lambda^{k} C_k
\end{align}
\eqref{eq:char_matrix_polynomial} is the reason why $C_i$ are called the coefficient matrices. It is conventional to call $M$ the \emph{Companion matrix} of $L(\lambda)$. For more details on this topic see \cite{gohberg2009matrix}.\\
\\
In order to derive a more explicit expression of $\chiM(\lambda)$, we may inspect a narrower family of coefficient matrices $C_i$, in which all $C_i$ are simultaneously triangularizable. As we will shortly see, this additional assumption on $C_i$ holds true for most of the $p$-CLI methods of interest. For instance, it is common to have all $C_i$ as polynomial expressions in some matrix, in which case the hypothesis is satisfied.

\begin{theorem} \label{thm:char_poly}
Using the notation above, assume that $C_0,C_1,\dots,C_{p-1}$ form a collection of simultaneously triangularizable matrices. That is, there exists an invertible matrix $Q\in\reals^{d\times d}$ such that 
\begin{align*}
	 T_i\eqdef  Q^{-1} C_i Q \quad i=0,1,\dots,p-1
\end{align*}
are upper triangular matrices. Then
\begin{align*}
\chiM(\lambda) =(-1)^{pd} \prod_{j=1}^d \circpar{\lambda^p - \sum_{k=0}^{p-1} \lambda^{k} (T_{k})_{j,j}}
\end{align*}
\end{theorem}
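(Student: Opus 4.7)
The plan is to combine \lemref{lemma:mat_pol_spec} with the simultaneous triangularization hypothesis in order to reduce the characteristic polynomial of $M$ to the determinant of an upper triangular matrix whose diagonal entries are explicit. Concretely, by \lemref{lemma:mat_pol_spec}, we already know
\begin{align*}
\chiM(\lambda) = (-1)^{pd}\det\circpar{\lambda^p I_d - \sum_{k=0}^{p-1} \lambda^{k} C_k},
\end{align*}
so the entire task is to compute the determinant appearing on the right-hand side under the assumption that the $C_k$ are simultaneously triangularizable.

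First, I would exploit the fact that conjugation by a common $Q$ turns each $C_k$ into $T_k$. Since $\lambda^p I_d = Q(\lambda^p I_d) Q^{-1}$ and conjugation is linear, the whole expression inside the determinant conjugates cleanly:
\begin{align*}
\lambda^p I_d - \sum_{k=0}^{p-1} \lambda^{k} C_k = Q \circpar{\lambda^p I_d - \sum_{k=0}^{p-1} \lambda^{k} T_k} Q^{-1}.
\end{align*}
Taking determinants and using multiplicativity, the $Q$ and $Q^{-1}$ factors cancel, leaving $\det(\lambda^p I_d - \sum_{k=0}^{p-1} \lambda^{k} T_k)$.

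Next, I would observe that the matrix $\lambda^p I_d - \sum_{k=0}^{p-1}\lambda^k T_k$ is itself upper triangular: each $T_k$ is upper triangular by hypothesis, $\lambda^p I_d$ is diagonal, and both sums and scalar multiples of upper triangular matrices remain upper triangular. Hence its determinant is simply the product of its diagonal entries, and its $(j,j)$ entry is $\lambda^p - \sum_{k=0}^{p-1}\lambda^k (T_k)_{j,j}$. Multiplying these $d$ factors and restoring the overall sign $(-1)^{pd}$ from \lemref{lemma:mat_pol_spec} yields exactly the claimed formula.

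There is no real obstacle here; the argument is a short chain of standard manipulations. The one point to be slightly careful about is making sure that the triangular structure is genuinely preserved under the linear combination inside the determinant — which is immediate — and that one invokes \lemref{lemma:mat_pol_spec} rather than trying to redo the block row reduction from scratch.
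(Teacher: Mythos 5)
Your proposal is correct and follows the same route as the paper: invoke \lemref{lemma:mat_pol_spec}, conjugate by $Q$ and cancel $\det(Q)\det(Q^{-1})$, then read off the determinant of the resulting upper triangular matrix as the product of its diagonal entries. There is nothing to add.
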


\begin{proof}
By hypothesis there exists an invertible matrix $Q\in\reals^{d\times d}$ such that 
\begin{align*}
	 T_i\eqdef  Q^{-1} C_i Q \quad i=0,1,\dots,p-1
\end{align*}
Using \ref{lemma:mat_pol_spec} we see that, 
\begin{align*}
\chiM(\lambda)
&= (-1)^{pd} \det\circpar{\lambda^p I_d - \sum_{k=0}^{p-1} \lambda^{k} C_{k}}\\
&= (-1)^{pd} \det\circpar{Q(\lambda^p I_d) Q^{-1} - \sum_{k=0}^{p-1} \lambda^{k} QT_{k}Q^{-1}}\\
&= (-1)^{pd} \det(Q) \det\circpar{\lambda^p I_d - \sum_{k=0}^{p-1} \lambda^{k} T_{k}} \det(Q^{-1})\\
&= (-1)^{pd} \det\circpar{\lambda^p I_d - \sum_{k=0}^{p-1} \lambda^{k} T_{k}} &\nonumber\\
&= (-1)^{pd} \prod_{j=1}^d\circpar{\lambda^p I_d - \sum_{k=0}^{p-1} \lambda^{k} T_{k}}_{j,j}  \\
&= (-1)^{pd} \prod_{j=1}^d{\circpar{ \lambda^p - \sum_{k=0}^{p-1} \lambda^{k} (T_{k})_{j,j}  }} \\
\end{align*}
\end{proof}
Note that the elements on the diagonal of $T_i$ in the last theorem, produced by some invertible matrix $Q$, are exactly the eigenvalues of $C_i$ in an arbitrary order. Let $\text{Eig}_j^Q(C_i)$ denote the $j$'th eigenvalue of $C_i$ according to the order induced by $Q$. The following is a direct consequences of this remark and \thmref{thm:char_poly}.
\begin{corollary}  \label{coro:cli_spec}
Let $\cM$ be a $p$-CLI method over $\reals^d$ with coefficient matrices $C_0,C_1,\dots,C_{p-1}$ simultaneously triangularized by some invertible matrix $Q$. Then,
\begin{align}   \label{eq:cli_spec}
\spec{\cM} =\bigcup_{j=1}^d \spec{\lambda^p - \sum_{k=0}^{p-1} \lambda^{k} \sigma^Q_j(C_k)}
\end{align}
\end{corollary}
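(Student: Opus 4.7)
The plan is to combine Theorem~\ref{thm:char_poly} with the elementary fact that the spectrum of a matrix equals the root set of its characteristic polynomial, and that roots of a product of scalar polynomials form the union of the root sets of the factors.

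First, I would invoke Theorem~\ref{thm:char_poly} to write
\[
\chiM(\lambda)=(-1)^{pd}\prod_{j=1}^{d}\left(\lambda^{p}-\sum_{k=0}^{p-1}\lambda^{k}(T_k)_{j,j}\right),
\]
where $T_k=Q^{-1}C_k Q$ is upper triangular. Since for each $k$ the diagonal entries of $T_k$ are precisely the eigenvalues of $C_k$ (with the ordering induced by the common triangularizer $Q$), I would identify $(T_k)_{j,j}=\sigma^{Q}_{j}(C_k)$ via the notation introduced right before the statement. This rewrites the characteristic polynomial as
\[
\chiM(\lambda)=(-1)^{pd}\prod_{j=1}^{d}\left(\lambda^{p}-\sum_{k=0}^{p-1}\lambda^{k}\sigma^{Q}_{j}(C_k)\right).
\]

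Next, I would use the discussion following Lemma~\ref{lemma:mat_pol_spec} (and the definitions in Section~\ref{section:cli_conv_prop}) to conclude that $\spec{\cM}=\spec{\bE M}=\spec{\chiM}$. For a product of scalar polynomials $p(\lambda)=\prod_j p_j(\lambda)$, we have $p(\lambda_0)=0$ iff some $p_j(\lambda_0)=0$, so the root set of the product is the union of the root sets of the factors. The overall nonzero constant $(-1)^{pd}$ contributes no roots and can be discarded. Applying this to the factorization above yields
\[
\spec{\cM}=\bigcup_{j=1}^{d}\spec{\lambda^{p}-\sum_{k=0}^{p-1}\lambda^{k}\sigma^{Q}_{j}(C_k)},
\]
which is exactly the claim.

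There is no real obstacle here; the proof is a bookkeeping exercise. The only subtle point worth explicitly noting is that a \emph{single} triangularizer $Q$ is used simultaneously for all the $C_k$, so the ordering of eigenvalues indexed by $j$ is consistent across $k=0,\dots,p-1$; this is what allows the scalar polynomial in $\lambda$ attached to index $j$ to be well-defined. Beyond this remark, the corollary follows immediately from Theorem~\ref{thm:char_poly}.
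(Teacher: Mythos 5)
Your proof is correct and follows exactly the route the paper intends: the paper states the corollary as ``a direct consequence'' of the remark identifying $(T_k)_{j,j}$ with the $Q$-ordered eigenvalues of $C_k$ together with Theorem~\ref{thm:char_poly}, and you have simply made that bookkeeping explicit. The one point worth flagging is a small notational mismatch in the paper itself (the remark introduces $\mathrm{Eig}_j^Q(C_i)$ while the corollary writes $\sigma^Q_j(C_k)$); you correctly read these as the same object.
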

As we will see, \eqref{eq:cli_spec} is of great importance for deriving lower bounds, as well as for designing optimization algorithms.
\clearpage

\chapter{p-CLI Optimization Algorithms}  \label{chapter:pcli_algos} 
Being well-understood, one may benefit a lot from describing the dynamics of various processes as $p$-CLI methods. In what follows we shall form a bridge between a certain class of optimization algorithms, which we call $p$-CLI optimization algorithms, and the analytic theory of polynomials by using this idea. \\

First, motivated by the example shown in Section \ref{section:sdca_case_study}, we establish the framework under which we will be analyzing optimization algorithms. Next, we formulate various popular optimization algorithms in this framework so as to demonstrate its utility. We then discuss the convergence properties of such algorithms as well as their iteration complexity. Lastly, we derive a fundamental principle which relates the sum of the coefficient matrices of any $p$-CLI optimization algorithm to its inversion matrix. 

\section{Framework} \label{section:framework}
We saw in Section \ref{section:sdca_case_study} that by applying SDCA on quadratic functions we were able to express it in terms of some iterative method $\cM$. This, in turn, enabled us to closely inspect the dynamics of the process. Indeed, by computing the iteration matrix of $\cM$ along with its spectrum we were able to derive an estimation for any test point of $\cM$ generated after finite number of iterations. In the sequel we introduce a framework which generalizes the arguments used for SDCA.\\

For the sake of brevity, we identify each quadratic function of the form 
\begin{align} 
f(\bx) = \frac{1}{2}\bx^\top A\bx + \bb^\top \bx
\end{align}
with the ordered pair $(A,\bb)$. \\
We further define,
\begin{align*}
\posdefun{d}{\Sigma} = \{(A,\bb)|~ A \in\reals^{d\times d} \text{  is symmetric and }\spec{A} \subseteq \Sigma,~\bb\in\reals^d\}
\end{align*}
for some $\Sigma\subseteq\reals^{++}$. Typically we have  $\Sigma = [\mu,L]$ for some prescribed constants $0<\mu<L$, in which case $(A,\bb)$ represents an $L$-smooth $\mu$-strongly convex function,  as implied by \thmref{ineq_conv_hessian}.\\
Now, let $\cA$ be an optimization algorithm which minimizes smooth strongly convex functions. We may apply $\cA$ on any quadratic function in $\posdefun{d}{\Sigma}$. If the resulting dynamics of $\cA$ can be cast as some iterative method, then one might hope to use the tools derived in the preceding chapter. Indeed, the formulation we choose, casting optimization algorithms as $p$-CLI methods, is just one of many possible ways for generalizing the SDCA case.  That being said, the advantages of this formulation are applicability to a wide range of popular algorithms and the relative simple analytic nature of $p$-CLI methods.\\

Formally in this framework we examine any iterative optimization algorithm $\cA$ 
such that when applied on $\posdefun{d}{\Sigma}$ takes the form of some iterative method $\cM$, as defined in \ref{def:iterative_method}, i.e. its initialization and update rule are 
\begin{align} \label{iteratio_d}
&\bz^0 = U \bx^0\\
&\bz^{k+1} = M(A,k) \bz^k + U N(A,k)\bb  \label{iterstep}
\end{align}
for some \emph{Lifting Factor} $p\in\bN$, a \emph{Lifting Matrix} $U\in \reals^{pd\times d}$ with  $U^\top U=I_d$, randomly drawn $pd\times pd$ \emph{Iteration Matrices} $M(A,k)$ and randomly drawn $d\times d$ \emph{Inversion Matrices}  $N(A,k)$.  The reason why we choose to call $N(A,k)$ an inversion matrix, will become clear in  Chapter \ref{chapter:lower_bounds}. Furthermore, the term iteration matrix may refer to the mappings $M(X,k):\posdefun{d}{\Sigma}\to \cD_{\reals^{pd\times pd}}$ or to a specific evaluation of it at $A$. It will be clear from the context which interpretation is used. The same convention holds for inversion matrices.\\

Any execution  of the algorithm induces a sequence of test points on $\reals^d$ by
\begin{align*}
\bx^k\eqdef U^\top \bz^k
\end{align*}
Assuming that $\cA$ minimizes the quadratic function at hand implies that this sequence gets closer and closer to the minimizer $\bx^*\eqdef -A^{-1}\bb$, regardless of the initialization point. Note that this yields,
\begin{align} \label{eq:conv_sol}
\bx^* = U^\top \bz^* 
\end{align}
\\

As one might expect, we are particularly  interested in algorithms which when applied on quadratic functions may take the form of a $p$-CLI method as defined in \ref{def:pcli}. We call such algorithms \emph{$p$-CLI Optimization Algorithms}. In particular, being stationary the iteration matrices and inversion matrices as defined in \ref{iteratio_d} must satisfy 
\begin{align*}
\forall k, ~M(A,k)=M(A,0)\\
\forall k, ~N(A,k)=N(A,0)
\end{align*}
(Here equality indicates an identical matrix distribution for all $k$)\\
in which case we denote $M(A)\eqdef M(A,0)$ and $N(A)\eqdef N(A,0)$. \\ We further require that the lifting matrix  of $p$-CLI methods would satisfy 
\begin{align} \label{def:U_def}
U = E_{p} \eqdef  (\underbrace{0_d,\dots,0_d}_{p-1 \text{ times}}, I_d)^\top
\end{align}

Finally, throughout the rest of this monograph, we will add the following technical requirement: The coefficient matrices of any $p$-CLI optimization algorithms are assumed to be simultaneously triangularizable. This assumption is most likely to be removed by using tools from the theory of Matrix Polynomials. We defer such treatment to future work. That being said, in all cases that we have examined this technical assumption is satisfied. Informally, the reason is that $C_i(A)$ cannot have a complex dependence on $A$, for otherwise one might spend more time on computing $C_i(A)$ than on the optimization process itself. Indeed, it is common to choose $C_i(A)$ as linear polynomials in $A$ or diagonal matrices with simple dependence on $A$, in which case the assumption holds true.\\

As we will see shortly, these algorithms lend themselves well to the theory of CLI methods, thus allowing one to readily apply the results derived in \ref{section:cli_conv_prop}.

\section{Specifications for Popular algorithms} \label{section_spec_algo}
Naturally, one might ask how wide-ranging is this framework, and what does characterize 
the optimization algorithms which it applies to. Roughly speaking, any algorithm whose rule of update is linearly dependent on the gradient and the hessian of the function under consideration may fit this technique. Instead of providing a precise answer for this question, we give specifications of various popular optimization algorithms for some quadratic function $f(\bx) = \frac{1}{2}\bx^\top A \bx + \bb^\top \bx$ in $\posdefun{d}{[\mu,L]}$.\\

\begin{itemize}
\item Full Gradient Descent (FGD) (see \cite{nesterov2004introductory})- \label{spec:fgd}
			\begin{align*}
			\bx^0 &\in \reals^d\\
			\bx^{k+1} &= \bx^k - \beta \nabla f(\bx^k)= \bx^k - \beta(A\bx^k +\bb)=(I-\beta A)\bx^k -\beta \bb\\
			\beta &= \frac{2}{\mu +L}
			\end{align*}
			Thus, this version of FGD forms a $1$-CLI optimization algorithm  such that,
			\begin{align*}
			M(A)&= I-\beta A\\
			N(A) &=-\beta I_d
			\end{align*}

\item Newton method (see \cite{nesterov2004introductory}) - \label{spec:newton}
\begin{align*}
\bx^0 &\in \reals^d\\
\bx^{k+1} &=  \bx^k- (\nabla^{2} f(\bx^k))^{-1} \nabla f(\bx^k) = \bx^k- A^{-1}(A\bx^k + \bb)\\&= (I-A^{-1}A)\bx^k - A^{-1}\bb = -A^{-1}\bb
\end{align*}
Hence, Newton method is a $1$-CLI optimization algorithm with
\begin{align*}
M(A)&= 0\\
N(A) &=- A^{-1}
\end{align*}

\item The Heavy Ball Method (\cite{polyak1987introduction})- 
\begin{align*}
\bx^0 &\in \reals^d\\
\bx^{k+1} &= \bx^k - \alpha(A\bx^{k} + \bb)+ \beta (\bx^{k}-\bx^{k-1}) \\&= (I-\alpha A + \beta I) \bx^{k} -\beta I \bx^{k-1} -\alpha \bb  \\&= \circpar{(1+\beta) I-\alpha A } \bx^{k} -\beta I \bx^{k-1} -\alpha \bb\\
\alpha &= \frac{4}{\circpar{\sqrt{L}+\sqrt{\mu}}^2}\\
\beta &= \circpar{\frac{\sqrt{L}-\sqrt{\mu}}{\sqrt{L}+\sqrt{\mu}} }^2
\end{align*}
That is, the Heavy Ball method is a $2$-CLI optimization algorithm with
\begin{align*}
M(A)&= \circpar{\begin{array}{cc} 0 & I \\  -\beta I &(1+\beta) I-\alpha A  \end{array}}\\
N(A) &=- \alpha I_d
\end{align*}

\item Accelerated Gradient Descent (AGD) (see \cite{nesterov2004introductory})-\label{spec:agd}
\begin{align*}
\bx^0&=\by^0 \in \reals^d\\
\by^{k+1} &= \bx^{k} - \frac1L\nabla f(\bx^{k})\\
\bx^{k+1} &= \circpar{1+\alpha}\by^{k+1} - \alpha \by^{k} \\
\alpha &= \frac{\sqrt{L}-\sqrt{\mu}}{\sqrt{L}+\sqrt{\mu}}
\end{align*}
Which can be rewritten as,
\begin{align*}
\bx^0& \in \reals^d\\
\bx^{k+1} &= \circpar{1+\alpha}\circpar{\bx^{k} - \frac1L\nabla f(\bx^{k})} - \alpha \circpar{\bx^{k-1} - \frac1L\nabla f(\bx^{k-1})}\\
&= \circpar{1+\alpha}\circpar{\bx^{k} - \frac1L (A\bx^{k}+\bb)} - \alpha \circpar{\bx^{k-1} - \frac1L (A\bx^{k-1}+\bb)}\\
&= \circpar{1+\alpha}\circpar{I - \frac1L A} \bx^{k}
 -\alpha\circpar{I - \frac1L A} \bx^{k-1}
 -\frac1L \bb
\end{align*}
AGD is $2$-CLI optimization algorithm with 
\begin{align*}
M(A)&= \circpar{\begin{array}{cc} 0 & I \\ -\alpha\circpar{I - \frac1L A}  & \circpar{1+\alpha}\circpar{I - \frac1L A} \end{array}}\\
N(A) &=- \frac1L I_d \\
\end{align*}

\item Stochastic Gradient Descent (SGD) (e.g. \cite{kushner2003stochastic,spall2005introduction})- A straightforward extension of the deterministic version FGD goes as follows.\\
 Let $(\Omega,\cF,\cP)$ be a probability space  and let $G(\bx,\omega):\reals^d\times \Omega\to\reals^d$ be 
an unbiased estimator of $\nabla f(\bx)$ for any $\bx$. That is,
\begin{align*}
\bE[ G(\bx,\omega) ] &= \nabla f(x) = A \bx + \bb
\end{align*}
As in (\cite{nemirovski2005efficient}), SGD may be specified by
\begin{align*}
			&\bx^0 \in \reals^d\\
			&\text{Generate } \omega_{k} \text{ randomly and set } \bx^{k+1} = \bx^k - \gamma_i G(\bx^k,\omega_{k})\\
			&\gamma_i = \frac{1}{\mu i}
			\end{align*}
			It is common to assume that $$G(\bx,\omega)= \nabla f(\bx) +\be(\bx,\omega)$$ and 
			\begin{align} \label{eq:zero_mean_noise}
			\bE[\be(\bx,\omega)]=0 \quad ,\forall \bx\in\reals^d
			\end{align}
			in which case we have
			\begin{align*}
			\bx^{k+1} &= \bx^k - \gamma_i \circpar{ \nabla f(\bx^k) +\be(\bx,\omega) }\\
			&= \bx^k - \gamma_i \circpar{ A\bx^k +\bb +\be(\bx,\omega) }\\
			&= \circpar{I-\gamma_i A}\bx^k  - \gamma_i \bb - \gamma_i \be(\bx,\omega)
			\end{align*}
			Evidently, there are types of noise for which the resulting algorithm may not form a $p$-CLI optimization algorithm. Nevertheless, if $$\be(\bx,\omega)=A_\omega\bx + \bb_\omega$$ for some $A_\omega$ and $\bb_\omega$ which satisfy
\begin{align*}
\bE[A_\omega]&=0\\
\bE[b_\omega]&=0
\end{align*}
then for appropriately chosen $\gamma_i$, e.g. $\gamma_i=\frac{2}{\mu+L}$, we get a stochastic iterative method which forms a $p$-CLI optimization algorithm. 
			
\item Stochastic Coordinate Descent (SCD) (see \cite{shalev2013stochastic})- This is an  extension of the example shown in Section \ref{section:sdca_case_study}. SCD works by repeatedly minimizing a uniformly randomly drawn coordinate of $f$ in each iteration. Which may be expressed as
\begin{align*}
&\bx^0 \in \reals^d\\
&\text{Pick } i\sim \cU([d]) \text{ and set }\bx^{k+1} =  \left(I-\frac{1}{A_{i,i}}\be_i a_i^\top \right)\bx^k - \frac{b_i}{A_{i,i}}\be_i
\end{align*}
where $a_i^\top$ denotes the $i$'th row of $A$ and $\bb\eqdef\circpar{b_1,b_2,\dots,b_d}$.  \\
That is, SCD is a $1$-CLI optimization algorithm such that 
\begin{align*}
\bE M(A)&= I-\frac{1}{d}\text{Diag}^{-1}(A_{1,1},\dots,A_{d,d})A\\
\bE N(A) &=- \frac{1}{d}\text{Diag}^{-1}(A_{1,1},\dots,A_{d,d})
\end{align*}
As a matter of fact, this method is equivalent to the well-known Jacobi's iterative method.

\item Conjugate Gradient Descent (CGD) (see \cite{nemirovski2005efficient})  can be re-expressed as a non-stationary linear iterative method. 

\item Stochastic Average Gradient (SAG) (see \cite{roux2012stochastic}) - Much like SDCA, SAG is an optimization algorithms that is designed to solve an optimization problem which is closely related to RLM (defined in \ref{opt:RLM}). SAG forms an non-trivial example of casting an optimization algorithm as a stationary linear iterative method. For the exact derivation see Section \ref{section:SAG}. Unfortunately, this derivation results in an iteration matrix which is slightly different from those of $p$-CLI optimization algorithms. Thus, in order to analyze SAG in this framework a further generalization is required.
\end{itemize}

\section{Convergence Properties}

A straight forward implication of \thmref{thm:conv_cli} is that a $p$-CLI optimization algorithm over $\reals^d$ converges if and only if the spectral radius of its iteration matrix $M$ is strictly smaller than 1. In which case, the algorithm converges to the fixed point 
 $$\circpar{I-\bE M(A)}^{-1}\bE \left[ U N(A)\bb\right] $$
\thmref{thm:conv_rate} comes in handy when determining the convergence rate. The upper bound shown in this theorem implies that there exists $C_\cM>0$ and $m\in\bN$ such that for any initial point it holds that 
\begin{align*}
\forall k\in\bN,~\norm{\bE \left[\bz^k\right]-\bz^*} \le C_\cM k^m \rho(M)^k\norm{\bz^0-\bz^*} 
\end{align*} 
Therefore for all $k\in\bN$ we have,
\begin{align}
\norm{\bE \left[\bx^k\right]-\bx^*} &=
\norm{\bE \left[U^\top\bz^k\right]- U^\top\bz^*} \nonumber \\
&\le \norm{U^\top} \norm{\bE \left[\bz^k\right]-\bz^*} \nonumber\\
&\le \norm{U^\top}C_\cM k^m \rho(M)^k\norm{\bz^0-\bz^*} \nonumber\\
&= \norm{U^\top}C_\cM k^m \rho(M)^k\norm{U\bx^0-U\bx^*} \nonumber\\
&\le \norm{U^\top}\norm{U}C_\cM k^m \rho(M)^k\norm{\bx^0-\bx^*} \label{ineq:x_up}
\end{align}
The lower bound, on the other hand,  states that  there exist $c_\cM>0,~m\in\bN$ and $\br\in\reals^{pd}$ such that for  any $\bz^0\in\reals^d$ which satisfy $\inprod{\bz^0-\bz^*}{\br}\neq0 $, and for sufficiently large $k\in\bN$ it holds that
\begin{align} \label{ineq:conv_lb}
\norm{\bE \left[\bz^k\right]-\bz^*} \ge c_\cM k^m \rho(M)^k\norm{\bz^0-\bz^*} 
\end{align}
Thus, in order to properly apply this lower bound we must first show that there exists $\bx^0\in\reals^d$ such that $U\bx^0-U\bx^*$ has a non-trivial projection on $r$. In what follows, we will prove a slightly weaker theorem, namely, we show that there exists $\bx_0\in\reals^d$ such that after at most $p$ iteration $\bE [\bx^k]-\bx^0$ has a non-trivial projection onto $\br$. Since \ref{ineq:conv_lb} holds for sufficiently large $k$, the lower bound we obtain for $\bE[\bx^k] -\bx^*$ is essentially the same.\\

Figuratively speaking, an important property of the expected iteration matrix $\bE M$ of $p$-CLI optimization algorithms is that it allows one to 'exhaust' the whole space $\reals^{pd}$ by recurrent matrix multiplication. Iterating a $p$-CLI process induces 'many' paths in $\reals^{pd}$, instead of $\reals^d$, which results in an acceleration of the convergence 
rate. The following theorem put this intuition on a rigorous ground.
\begin{theorem} \label{thm:cli_ex}
Let $\bv\in\reals^{pd}$ . Then, there exist $\bu\in\reals^d$ and $k\in\{0,\dots,p-1\}$ such that $\inprod{(\bE M)^k E_p\bu}{\bv}\neq 0 $.
\end{theorem}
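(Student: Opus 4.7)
The plan is to reduce the claim to the linear-algebraic statement that the vectors $(\bE M)^k E_p \bu$, as $\bu$ ranges over $\reals^d$ and $k$ over $\{0,1,\dots,p-1\}$, span all of $\reals^{pd}$. Once this is shown, any nonzero $\bv \in \reals^{pd}$ must have a nonzero inner product with at least one such vector, which is exactly the desired conclusion. (The degenerate case $\bv = 0$ is vacuous and is evidently not the intended one, given how the claim is used in the sequel.)

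First I would note that $\bE M$ retains the same companion block structure as $M$ itself, merely with coefficient matrices $\bE C_0,\ldots,\bE C_{p-1}$ in place of $C_0,\ldots,C_{p-1}$. It therefore suffices to work with a generic matrix of this companion form, which I will continue to call $M$ for brevity. The decisive property of $M$ is that acting on a block vector $(v_1,\dots,v_p)^\top$ it produces $(v_2,v_3,\dots,v_p,\sum_{j=0}^{p-1} C_j v_{j+1})^\top$; in particular, it shifts the upper blocks up by one position and overwrites only the bottom block.

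Next I would compute $M^k E_p \bu$ explicitly by induction on $k$, starting from $E_p \bu = (0,\dots,0,\bu)^\top$. A routine induction shows that $M^k E_p \bu$ has zeros in its first $p-1-k$ block positions, the vector $\bu$ itself in block $p-k$, and certain polynomial expressions in the $C_i$'s applied to $\bu$ in blocks $p-k+1,\dots,p$. The key point to extract is simply that the ``leading'' block of $M^k E_p \bu$ sits in position $p-k$ and equals $\bu$ unmodified.

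To conclude, define $V_k = \mathrm{span}\{M^j E_p \bu : 0 \le j \le k,\ \bu \in \reals^d\}$ and let $W_k \subseteq \reals^{pd}$ denote the subspace of vectors supported on the last $k+1$ block coordinates. I would show by induction on $k$ that $V_k = W_k$. The base case $k=0$ is immediate from $V_0 = \mathrm{Image}(E_p) = W_0$. For the inductive step, the computation above places an arbitrary $\bu$ in block $p-k$ of some element of $M V_{k-1} \subseteq V_k$ while the rest of its support lies in blocks $p-k+1,\dots,p$; subtracting a suitable element of $V_{k-1} = W_{k-1}$ lets us adjust those remaining blocks freely, so $V_k \supseteq W_k$, and the reverse inclusion is clear. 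Taking $k = p-1$ gives $V_{p-1} = \reals^{pd}$, so no nonzero $\bv$ can be orthogonal to every element of this span, which is the claim. The only nontrivial step is the block-structure induction for $M^k E_p \bu$, and this is a notationally mild but conceptually transparent calculation rather than a genuine obstacle.
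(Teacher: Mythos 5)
Your proof is correct and is essentially the paper's argument in different packaging: both hinge on the fact that $(\bE M)^k E_p \bu$ carries $\bu$ unmodified in block $p-k$ with zeros above it, and both parlay this into spanning $\reals^{pd}$ (the paper by exhibiting the set $\{(\bE M)^k E_p \be_i : k<p,\ i\le d\}$ as a basis via a staircase independence argument, you via the nested chain of subspaces $V_k = W_k$). A minor bonus of your direct induction is that it sidesteps the paper's binomial expansion $(O+\cG)^K = O^K + \sum_{k=1}^K \binom{K}{k}\cG^k O^{K-k}$, which does not literally hold since $O$ and $\cG$ do not commute, although the block-structure conclusion the paper extracts from it is nevertheless correct.
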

\begin{proof}
Let $B=\{\be_1,\dots,\be_d\}$ be any basis for $\reals^d$. We claim that the following form a basis for $\reals^{pd}$
\begin{align*}
\cB =\{&E_p\be_1, ME_p\be_1, M^2 E_p\be_1,\dots ,M^{p-1}E_p\be_1,\\
&E_p\be_2, M E_p\be_2, M^2 E_p\be_2,\dots ,M^{p-1} E_p\be_2\\
& \qquad\qquad\qquad\qquad\vdots\\
&E_p\be_d, M E_p \be_d, M^2 E_p\be_d,\dots,  M^{p-1} E_p\be_d \}
\end{align*}
Proving this concludes the proof, for if
\begin{align*}
< M^k E_p\be_i,\bv>=0, \qquad  \forall k=0,\dots,p-1 \text{ and } i=1,\dots,d
\end{align*}
Then this would contradicts $\text{span}(\cB ) =\reals^{pd}$.

Indeed, since $\absval{\cB}=pd$, it suffices to show that $\cB$ forms an independent set of vectors. To this end, we first express $(\bE M)^k E_p\be_i $ in a more convenient way. Note that,
\begin{align*}
M &= \mymat{0_d & I_d &&& \\&0_d & I_d&&\\ &&&&\\ && \ddots&\ddots\\&&& \\&&& 0_d & I_d\\C_{0}&&\dots&C_{p-2}&C_{p-1} }\\
&= \underbrace{\mymat{0_d\\0_d\\\vdots\\0_d\\I_d}  \mymat{C_0&C_1&\dots&C_{p-1}}}_{\cG} + 
\underbrace{\mymat{0_d & I_d &&& \\&0_d & I_d&&\\ &&&&\\ && \ddots&\ddots\\&&& \\&&& 0_d & I_d\\0_d&&\dots&0_d&0_d }}_O
\end{align*}
Hence, for any $\bx\in\reals^d$,
\begin{align}
M^K E_p \bx &=
\circpar{O^K+\sum_{k=1}^K \binom{K}{k} \cG^k  O^{K-k}}E_p \bx \nonumber\\
&=O^K E_p \bx +\sum_{k=1}^K \binom{K}{k}  \cG^k  O^{K-k}E_p \bx \nonumber\\
&=
\mymat{
0\\  \vdots \\ 0\\ \bx \\ \star\\\vdots\\ \star
}\
\begin{array}{c}
\\\\ \text{(} p-K \text{)'th position}  \\ \\\\ 
\end{array}
\end{align}
Now, for any linear combination of the elements of $\cB$  which equals zero, the coefficients which correspond to $M^{p-1}E_p e_i$ must vanish. This is due to the fact that the first $d$ coordinates of these vectors are $e_1,\dots,e_d$ which form a set of independent set. This, in turn ,yields that the coefficients which correspond to  $M^{p-2}E_p e_i$ must vanish, and so on. Thus, show that $\cB$ is an independent set.
\end{proof}

Hence, combining \ineqref{ineq:conv_lb} with \thmref{thm:cli_ex} we have proven that there exists $\bx^0\in\reals$ such that for sufficiently large $k$ it holds that,
\begin{align} \label{ineq:x_lb}
\norm{\bE \left[\bx^k\right]-\bx^*} 
&\ge \frac{1 }{\norm{U}}\norm{\bE \left[\bz^k\right]-\bz^*} \\
&\ge \frac{c_\cM k^m }{\norm{U}}\rho(M)^k\norm{\bz^0-\bz^*} \\
&\ge \frac{c_\cM k^m }{\norm{U}\norm{U^\top}}\rho(M)^k\norm{\bx^0-\bx^*} 
\end{align}

We conclude this section by the stating a corollary which summarizes Inequalities \ref{ineq:x_up} and \ref{ineq:x_lb}.
\begin{corollary} \label{cor:conv_rate_ulb}
Let $\cA$ be a $p$-CLI optimization algorithm over $\reals^d$.\\
Then, for any quadratic function $(A,\bb)\in\posdefun{d}{\Sigma}$, there exists an initialization point $x^0\in\reals^d$, such that 
\begin{align*}
\norm{\bE \left[\bx^k\right]-\bx^*} = \Theta\circpar{k^m\rho(M(A))^k\norm{\bx^0-\bx^*}}
\end{align*}
where $\rho(M(A))$ denotes the spectral radius of the iteration matrix $M(A)$, and  $m$ denotes the largest index among the eigenvalues of $M(A)$ whose absolute value is maximal.
\end{corollary}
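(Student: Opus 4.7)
The proof will be a straightforward assembly of results already established in the chapter: Theorem \ref{thm:conv_rate} applied in the lifted space $\reals^{pd}$, combined with Theorem \ref{thm:cli_ex} to guarantee the existence of a ``generic'' initialization in $\reals^d$. My plan is to treat the $p$-CLI optimization algorithm as a $p$-CLI method on $\reals^{pd}$ via the lifting $\bz^k = U\bx^0$-initialized recursion $\bz^{k+1} = M(A)\bz^k + U N(A)\bb$, extract upper and lower bounds on $\norm{\bE[\bz^k] - \bz^*}$ from Theorem \ref{thm:conv_rate}, and then push them down to bounds on $\norm{\bE[\bx^k] - \bx^*} = \norm{U^\top(\bE[\bz^k] - \bz^*)}$ using $\bx^k = U^\top \bz^k$ and the identity $U^\top U = I_d$.

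For the upper bound I would chain the operator-norm estimate $\norm{U^\top(\bE[\bz^k]-\bz^*)} \le \norm{U^\top}\,\norm{\bE[\bz^k]-\bz^*}$ with Theorem \ref{thm:conv_rate} and use $\bz^0 - \bz^* = U\bx^0 - \bz^*$; writing $\bz^* = U\bx^*$ (which holds because the fixed-point of the lifted dynamics for a $p$-CLI \emph{optimization} algorithm, where $U = E_p$, must agree with $\bx^*$ in its last block and be consistent across blocks after convergence) yields $\norm{\bz^0-\bz^*} = \norm{U(\bx^0-\bx^*)} \le \norm{U}\cdot\norm{\bx^0-\bx^*}$. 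This gives Inequality \eqref{ineq:x_up} with absorbed constant $\norm{U}\,\norm{U^\top}C_{\cM}$.

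The main obstacle, and the place where the corollary is not immediate from Theorem \ref{thm:conv_rate}, is the lower bound. The lower bound in Theorem \ref{thm:conv_rate} requires the initial displacement $\bz^0 - \bz^*$ to have non-vanishing inner product with a specific vector $\br \in \reals^{pd}$, but the admissible initializations are restricted to the affine subspace $U\reals^d - \bz^* \subset \reals^{pd}$, which has codimension $(p-1)d$. One might worry that this subspace lies entirely in the ``bad'' hyperplane $\br^\perp$. This is precisely where I would invoke Theorem \ref{thm:cli_ex}: it shows that the vectors $\{(\bE M)^k U\bu : \bu\in\reals^d,\, 0 \le k \le p-1\}$ span $\reals^{pd}$, so at least one such vector has non-zero inner product with $\br$. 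Pick $\bu$ and $k^\star \in \{0,\dots,p-1\}$ realizing this, and choose the initialization so that, after at most $k^\star$ steps, the displacement from $\bz^*$ equals $(\bE M)^{k^\star}U\bu$ (up to a scalar). Since $k^\star$ is bounded by the fixed constant $p$, shifting the starting time by $k^\star$ only changes the constant $c_{\cM}$ and the polynomial factor $k^m$ by a bounded multiplicative factor, so the asymptotic rate is unaffected.

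Having secured a valid $\bx^0$, the lower bound in $\reals^d$ follows from $\norm{\bE[\bz^k]-\bz^*} = \norm{U U^\top(\bE[\bz^k]-\bz^*)} \le \norm{U}\cdot\norm{\bE[\bx^k]-\bx^*}$ (using $\bx^k = U^\top\bz^k$ and $U^\top U = I_d$ to write $\bz^k-\bz^* = U(\bx^k-\bx^*)$ for $k$ large, since the lifted iterate converges into $\image(U)$), giving Inequality \eqref{ineq:x_lb}. Combining the matching upper and lower bounds (both of order $k^m \rho(M(A))^k \norm{\bx^0-\bx^*}$, differing only in the multiplicative constants) yields the $\Theta$ statement. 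The only subtlety requiring care is the precise relation between $\bz^*$ and $U\bx^*$, which I expect to verify directly from the fixed-point equation $\bz^* = \bE[M(A)]\bz^* + \bE[UN(A)\bb]$ together with the block-companion structure of $M(A)$ recalled at the start of Section~\ref{section_cano}.
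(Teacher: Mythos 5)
Your overall plan follows the same route as the paper: lift to $\reals^{pd}$, apply \thmref{thm:conv_rate} for the two-sided bound on $\norm{\bE[\bz^k]-\bz^*}$, invoke \thmref{thm:cli_ex} to find a valid $\bx^0$, and push the bounds down via $U$. However, two of your bridging identities are false and would not survive the verification you propose to carry out.

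First, $\bz^* = U\bx^*$ is wrong. Writing the fixed-point equation $\bz^* = \bE[M(A)]\bz^* + U\bE[N(A)]\bb$ in block form, the first $p-1$ block-rows of the companion matrix force all consecutive blocks of $\bz^*$ to be equal, so $\bz^* = (\bx^*,\bx^*,\dots,\bx^*)^\top$, whereas $U\bx^* = E_p\bx^* = (0,\dots,0,\bx^*)^\top$. The ``consistency across blocks'' you invoke is exactly what makes them all equal to $\bx^*$, not what makes the leading ones vanish. The relation that does hold is the one the paper records, $\bx^* = U^\top\bz^*$ (the last block). This does not kill the $\Theta$ statement — for a fixed $\bx^0$ with $\bx^0 \neq \bx^*$, $\norm{\bz^0-\bz^*}$ and $\norm{\bx^0-\bx^*}$ are both positive constants and the ratio is absorbed — but the write-up as you intend it would contain a false equality.

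Second, and for the same reason, ``the lifted iterate converges into $\image(U)$'' is also false: $\bE[\bz^k]\to\bz^* = (\bx^*,\dots,\bx^*)^\top\notin\image(U)$ unless $\bx^*=0$, and consequently $\bE[\bz^k]-\bz^*$ is not in $\image(U)$ either, so the identity $\norm{\bE[\bz^k]-\bz^*} = \norm{UU^\top(\bE[\bz^k]-\bz^*)}$ fails. The sound way to transfer the lower bound is to use that $\bz^k-\bz^*$ stacks $p$ consecutive displacements $\bx^{k+j}-\bx^*$, so $\norm{\bE[\bz^k]-\bz^*}^2 = \sum_{j=0}^{p-1}\norm{\bE[\bx^{k+j}]-\bx^*}^2$; combined with the already-established upper bound, this lets you compare the lifted lower bound with the original sequence at the cost of a shift by at most $p$ indices and a factor of $\sqrt{p}$, both harmless asymptotically. (The paper's own inequality $\norm{\bE[\bx^k]-\bx^*} \ge \norm{U}^{-1}\norm{\bE[\bz^k]-\bz^*}$ is likewise imprecise — since $\norm{U}=1$ and $\norm{U^\top\bw}\le\norm{\bw}$ always, the stated direction cannot hold in general — so this is a spot worth tightening regardless of which version you follow.)
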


\section{Iteration Complexity}
We saw that the convergence rate of $p$-CLI optimization algorithms is completely characterized by the spectral radius of the iteration matrix. 
This suggests that the spectral radius is an absolutely adequate way for comparing the performance of two different $p$-CLI optimization algorithms.
However, since many algorithms obtain sub-linear or super-linear (e.g.  quadratic) convergence rates, this measure lacks the generality necessary for comparing any iterative optimization algorithm. Thus motivating the following more conventional way for measuring performances of optimization algorithms.\\
\\
Let $\cA$ be a $p$-CLI optimization algorithm. 
For any $(A,\bb)$ we denote by\\ $\IC\circpar{\cA,\epsilon,(A,\bb),\bx^0}$ the minimal number of iterations $K$ required for algorithm $\cA$ to obtain 
\begin{align*}
\norm{\bE [\bx^k] - \bx^* } <\epsilon,\quad \forall k\ge K
\end{align*}
when applied to the function $(A,\bb)$ and initialized with $x^0$.\\ The \emph{Iteration~Complexity} of $\cA$ is defined by
\begin{align*}
\ICA = \max_{ (A,\bb) \in \posdefun{d}{\Sigma},~ x^0\in\reals^d } \IC\circpar{\cA,\epsilon,(A,\bb),\bx^0}
\end{align*}
This complexity is sometimes called \emph{Query Complexity}. 

The following theorem relates the bound regarding the spectral radius of the iteration matrix of any $p$-CLI optimization algorithm with its iteration complexity. 

\begin{theorem} \label{thm:ic_cli}
Fix $d\in\bN$ and let $\cA$ be a $p$-CLI optimization algorithm and $(A,\bb)\in\posdefun{d}{\Sigma}$ a quadratic function. Then,
\begin{align*}
\ICA =\Omega\circpar{\frac{\rho}{1-\rho}\ln(1/\epsilon)}
\end{align*}
and
\begin{align*}
\ICA=\bigO{\frac{1}{1-\rho}\ln(1/\epsilon)}
\end{align*}

where $\rho$ denotes the spectral radius of the iteration matrix $M(A)$.
\end{theorem}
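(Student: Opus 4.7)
The plan is to read off both iteration-complexity bounds directly from Corollary~\ref{cor:conv_rate_ulb}, which gives the sharp two-sided asymptotics $\norm{\bE[\bx^k]-\bx^*} = \Theta(k^m \rho^k \norm{\bx^0-\bx^*})$, and then to translate the exponential rate $-\ln\rho$ into the linear surrogates $1-\rho$ and $(1-\rho)/\rho$ that appear in the statement. The only elementary fact I will use from outside the paper is the pair of tangent-line inequalities
\begin{align*}
1-\rho \;\le\; \ln(1/\rho) \;\le\; \frac{1-\rho}{\rho}, \qquad \rho\in(0,1),
\end{align*}
both obtained from $\ln(1+t)\le t$ at $t=\rho-1$ and $t=(1-\rho)/\rho$ respectively.

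For the upper bound, I start from the upper half of Corollary~\ref{cor:conv_rate_ulb}: for every initialization $\bx^0$, $\norm{\bE[\bx^k]-\bx^*}\le C_\cM k^m \rho^k\norm{\bx^0-\bx^*}$. Requiring this to drop below $\epsilon$ and taking logarithms yields $k\ln(1/\rho)\ge \ln(1/\epsilon)+m\ln k+O(1)$. Invoking $\ln(1/\rho)\ge 1-\rho$ and trying $k=\frac{C'}{1-\rho}\ln(1/\epsilon)$ makes the linear-in-$k$ term dominate the logarithmic correction $m\ln k$, because at this choice of $k$ one has $\ln k = O(\ln\ln(1/\epsilon)+\ln\tfrac{1}{1-\rho})$, which is absorbed by $\ln(1/\epsilon)$ for small enough $\epsilon$. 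This yields $\ICA=O\!\circpar{\frac{1}{1-\rho}\ln(1/\epsilon)}$.

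For the lower bound, I pick the specific worst-case initialization $\bx^0_\star$ supplied by Corollary~\ref{cor:conv_rate_ulb}; by definition of $\ICA$ as a maximum over $\bx^0$, this is enough to give the $\Omega$ bound. Whenever $k$ is large enough for the asymptotic lower bound of the corollary to be in force and $c_\cM k^m \rho^k\norm{\bx^0_\star-\bx^*}\ge \epsilon$, the algorithm has not yet reached accuracy $\epsilon$ at step $k$, so $\IC(\cA,\epsilon,(A,\bb),\bx^0_\star)>k$. Solving this inequality (again ignoring the polynomial prefactor) gives $k\approx \ln(1/\epsilon)/\ln(1/\rho)$, and plugging in the upper bound $\ln(1/\rho)\le (1-\rho)/\rho$ turns it into $k=\Omega\!\circpar{\frac{\rho}{1-\rho}\ln(1/\epsilon)}$, as required.

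The one place that needs care is the polynomial prefactor $k^m$. In both directions it contributes only an additive $O(\log\log(1/\epsilon)+\log\tfrac{1}{1-\rho})$ term to the log-inequality, and the argument hinges on this being dominated by $\ln(1/\epsilon)$ in the regime $\epsilon\to 0$. Since $m$ is a fixed integer bounded by $pd$, this is routine but is the only spot where one has to be careful to keep constants independent of $\epsilon$. Everything else is a one-line inversion of the exponential decay rate.
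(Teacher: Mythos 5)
Your argument is correct and matches the paper's route: the paper also derives both bounds by feeding the two-sided asymptotics of Corollary~\ref{cor:conv_rate_ulb} through the pair of tangent-line inequalities $1-\rho\le\ln(1/\rho)\le(1-\rho)/\rho$, the only packaging difference being that the paper outsources this inversion to Lemma~\ref{lemma:spec_to_rate} in the appendix. Your explicit remark about absorbing the $k^m$ prefactor is, if anything, a bit more careful than the paper's treatment, which dismisses it with ``(We might as well assume that the last inequality holds for any $k$).''
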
  
This theorem is easily proven be applying \lemref{lemma:spec_to_rate} (see Appendix \ref{chapter:tech}) on the bound given in \corref{cor:conv_rate_ulb}.\\

An arguably striking property of this bound is that it does not depend on the dimension $d$ of the problem space. Nevertheless, large $d$ might lead to a large number of iterations necessary for arriving at the asymptotic conditions.

\section{Coefficient Matrices of \texorpdfstring{$p$}{p}-CLI Optimization Algorithms}
We now know that the most significant factor which governs the iteration complexity of any $p$-CLI optimization algorithm is the spectral radius of its iteration matrix. We will see shortly that the spectrum of the iteration matrix, in particular its spectral radius, is highly related to the structure of the inversion matrix. To be specific, by closely inspecting the iteration matrix and the inversion matrix of FGD,AGD and HB one finds that the sum of the corresponding coefficient matrices always sum up to $I+N(A)A$. In the sequel we show that this is not a mere coincidence, but an extremely useful fact which forms our main tool in deriving lower bounds as well as $p$-CLI optimization algorithms. \\

Let $\cA$ be a $p$-CLI optimization algorithm over $\reals^d$ and let us denote its iteration matrix by $M(A)$, its inversion matrix by $N(A)$ and its lifting matrix by $U$. Furthermore, let $(A,\bb)\in\posdefun{d}{\Sigma}$ be a quadratic function. 
Recall that by  \thmref{thm:conv_cli} we have $$\bE \bz^k \to\circpar{I-\bE M(A)}^{-1}\bE \left[ U N(A)\bb\right] $$ for any initialization  point.\\
Combining this with \eqref{eq:conv_sol}, we get
\begin{align*}
U^\top \circpar{I-\bE M(A)}^{-1}\bE \left[U N(A)\bb\right] = - A^{-1}b
\end{align*}
Since this holds true for any $b\in\reals^d$, we get
\begin{align*}
U^\top \circpar{I-\bE M(A)}^{-1} U \bE  \left[N(A)\right] = - A^{-1}
\end{align*}
Clearly,  \eqref{eq:conv_sol} implies that $\bE N(A)$ must be invertible.  This yields,
\begin{align} \label{eq:conv_correct}
U^\top \circpar{I-\bE M(A)}^{-1} U = - (\bE \left[N(A)\right] A) ^{-1} 
\end{align}
This allows us to derive the following theorem regarding the sum of coefficient matrices of $p$-CLI optimization algorithms.

\begin{theorem} \label{thm:conv_correct}
Let $\cA$ be a $p$-CLI method and assume that its initialization point and update rule are as in \ref {iteratio_d},\ref{iterstep}, respectively, and let $A\in\posdefun{d}{\reals^{++}}$. Then, $\cA$ converges to $-A^{-1}\bb$ for any $\bb$ if and only if  
\begin{align} \label{eq:coef_formula}
\sum_{j=0}^{p-1} C_i(A) = I_d + N(A)A 
\end{align}
where $C_i(A)$ are the coefficient matrices which correspond to the iteration matrix $M(A)$ of $\cA$.\\
In which case, since the coefficient matrices are further assumed to be simultaneously triangularizable as in \thmref{thm:char_poly}, i.e. there exists an invertible matrix $Q\in\reals^{d\times d}$ such that 
\begin{align*}
	 T_i(A)\eqdef  Q^{-1}C_i(A)Q \quad i=0,1,\dots,p-1
\end{align*}
are upper triangular matrices, then 
\begin{align*}
\left\{ \left. \sum_{i=0}^{p-1} (T(A)_i)_{jj} ~\right|~ j\in[d] \right\}=\spec{I+NA}
\end{align*}
\end{theorem}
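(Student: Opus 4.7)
The plan is to reduce the stated identity to the fixed-point formula derived in the paragraph preceding the theorem, by computing the bottom-right $d\times d$ block of the resolvent $(I-\bE M(A))^{-1}$ explicitly using the block-companion structure of $M$. Concretely, I would begin by invoking \thmref{thm:conv_cli}: convergence of $\cA$ on $(A,\bb)$ is equivalent to $\rho(\bE M(A))<1$, and in that case
\begin{align*}
\bz^\star = (I-\bE M(A))^{-1}\,\bE[UN(A)\bb].
\end{align*}
Since $\bx^\star=U^\top\bz^\star$ (cf.\ \eqref{eq:conv_sol}) and convergence to $-A^{-1}\bb$ must hold for every $\bb$, this is equivalent to the matrix identity $U^\top(I-\bE M(A))^{-1}U\,\bE N(A) = -A^{-1}$, which is precisely \eqref{eq:conv_correct}. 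Hence it suffices to show that $U^\top(I-\bE M(A))^{-1}U = \bigl(I-\sum_{i=0}^{p-1}\bE C_i(A)\bigr)^{-1}$, since then rearranging gives $\sum C_i(A) = I + N(A)A$ (dropping $\bE$ under the usual convention).

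The main step is the resolvent computation, which I would carry out by solving $(I-M)Y = U$ for a $pd\times d$ matrix $Y=\bigl(Y_0^\top,Y_1^\top,\ldots,Y_{p-1}^\top\bigr)^\top$, where $U=E_p$ has only its last block equal to $I_d$. Reading off the first $p-1$ block rows of $I-M$ (which are $(I,-I,0,\dots,0)$, shifted) gives the chain of equations $Y_0 = Y_1 = \cdots = Y_{p-1}$; call this common block $Y_\star$. The last block row then reads
\begin{align*}
-C_0Y_0 - C_1Y_1 - \cdots - C_{p-2}Y_{p-2} + (I-C_{p-1})Y_{p-1} = I,
\end{align*}
which collapses to $\bigl(I-\sum_{i=0}^{p-1}C_i\bigr)Y_\star = I$. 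Because $U^\top Y = Y_{p-1} = Y_\star$, this yields $U^\top(I-M)^{-1}U = \bigl(I-\sum_{i=0}^{p-1}C_i\bigr)^{-1}$, as desired. Substituting into \eqref{eq:conv_correct} and inverting (which in particular shows that $I-\sum C_i$ is invertible exactly when $N(A)A$ is, consistent with the invertibility of $N(A)$ noted above) gives the claimed formula $\sum_{i=0}^{p-1} C_i(A) = I_d + N(A)A$; conversely, running the same chain backwards shows this identity forces the limit to be $-A^{-1}\bb$, establishing the "if and only if".

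For the second part about the spectrum, I would use the simultaneous triangularization hypothesis: with $T_i(A) = Q^{-1}C_i(A)Q$ upper triangular, the matrix $\sum_{i=0}^{p-1}T_i(A)$ is itself upper triangular, so its eigenvalues are exactly its diagonal entries $\bigl\{\sum_{i=0}^{p-1}(T_i(A))_{jj}\bigr\}_{j\in[d]}$. On the other hand, $\sum_i T_i(A) = Q^{-1}\bigl(\sum_i C_i(A)\bigr)Q = Q^{-1}(I+N(A)A)Q$, which is similar to $I+N(A)A$ and hence shares its spectrum, yielding the stated equality.

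I do not anticipate a serious obstacle: the only slightly delicate point is keeping the expectation operator straight (the fixed-point derivation \eqref{eq:conv_correct} involves $\bE M$ and $\bE N$, while the theorem is phrased with the notational convention of suppressing $\bE$), and verifying that invertibility of $\bE N(A)$ and of $I-\sum \bE C_i(A)$ are automatic consequences of having a well-defined fixed point equal to $-A^{-1}\bb$ for every $\bb$.
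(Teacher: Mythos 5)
Your proof is correct and follows the paper's route almost exactly: both start from \thmref{thm:conv_cli} and \eqref{eq:conv_correct}, both reduce to computing the bottom-right $d\times d$ block of $(I-\bE M)^{-1}$, and both close the spectral part via similarity invariance of $\spec{\cdot}$. The one (minor) difference is in how that block is extracted: the paper invokes the Schur complement identity and explicitly writes out $(I-M_{11})^{-1}$ as the block upper-triangular matrix of identities, then multiplies through to obtain $\sum_{i=0}^{p-1}C_i$; you instead solve the linear system $(I-M)Y=E_p$ directly, using the first $p-1$ block rows to force $Y_0=\cdots=Y_{p-1}$ and the last block row to collapse to $(I-\sum C_i)Y_\star=I$. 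Your variant is a touch more elementary (no need to recall the Schur formula or to multiply out $M_{21}(I-M_{11})^{-1}M_{12}$), and it makes visible why the answer depends on the coefficient matrices only through their sum; the paper's version has the advantage of slotting directly into the standard block-inversion toolkit. One small caveat worth keeping in mind when you write this up: the ``only if'' direction presupposes $\rho(\bE M(A))<1$ (i.e.\ that the method converges at all), which is implicit in the hypothesis that $\cA$ is a convergent $p$-CLI optimization algorithm; the paper handles this the same way, stating that ``the other direction is proven by reversing the steps of the proof'' without belaboring the point.
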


\begin{proof}
For the sake of convenience we shall omit the functional dependency of the iteration matrix and the inversion matrix on $A$. We show that the coefficient matrices of a $p$-CLI optimization method must obey \eqref{eq:coef_formula}, the other direction is proven by reversing the steps of the proof.\\
Recall that,
\begin{align*}
M = \mymat{0_d & I_d &&& \\&0_d & I_d&&\\ &&&&\\ && \ddots&\ddots\\&&& \\&&& 0_d & I_d\\C_{0}&&\dots&C_{p-2}&C_{p-1} }
\end{align*} 
for some $C_0,\dots,C_{p-1}\in\reals^d$. We define a partition of $M$ in the following manner,
\begin{align*}
\circpar{\begin{array}{c|cccc} M_{11}  &M_{12}\\ \hline M_{21}& M_{22} \end{array}} 
\eqdef\circpar{\begin{array}{cccc|c} 0_d & I_d &&& \\&0_d & I_d&&\\ &&&&\\ && \ddots&\ddots\\&&& \\&&& 0_d & I_d\\ \hline C_{0}&&\dots&C_{p-2}&C_{p-1} \end{array}}
\end{align*}
\eqref{eq:conv_correct} together with the hypothesis regarding $U$ readily implies,
\begin{align}
(I-M_{22} - M_{21}(I-M_{11})^{-1}M_{12})^{-1}   &= -(NA)^{-1}\nonumber\\
I-M_{22} - M_{21}(I-M_{11})^{-1}M_{12}   &= -NA \nonumber\\
M_{22} + M_{21}(I-M_{11})^{-1}M_{12}   &= I+NA \label{eqst1} 
\end{align}
This equation may be easily derived using Schur Complement. Moreover, it is straightforward to verify that,
\begin{align*}
\circpar{I-M_{11}}^{-1} = \mymat{I_d && I_d&I_d\\&I_d && I_d\\&&\ddots \\&&&I_d  }
\end{align*}
Plugging in this into (\ref{eqst1}) yields
\begin{align} 
\sum_{i=0}^{p-1} C_i = I+NA
\end{align}

Consequently,
\begin{align*}
\left\{ \left. \sum_{i=0}^{p-1} (T_i)_{jj} ~\right|~ j\in[d] \right\}&=\spec{\sum_{i=0}^{p-1} T_i} \\
&= \spec{Q\circpar{\sum_{i=0}^{p-1} T_i}Q^{-1}} \\
&= \spec{\sum_{i=0}^{p-1} QT_iQ^{-1}} \\
&= \spec{\sum_{i=0}^{p-1} C_i} \\
&=\spec{I+NA}
\end{align*}
Thus concludes the proof.

\end{proof}

The results regarding $p$-CLI optimization algorithms which has been  presented in the last section may be succinctly summarized by the following useful corollary which combines the implications \thmref{thm:char_poly} and \thmref{thm:conv_correct}. This corollary will be used extensively throughout the next chapters. 
\begin{corollary} \label{cor:ess_for}
The characteristic polynomial of any $p$-CLI optimization algorithm, applied on some quadratic function $(A,\bb)\in\posdefun{d}{\Sigma}$, is a product of $d$ monic polynomials $s_1(\lambda),\dots,s_d(\lambda)$ of degree $p$, for which
it holds that
\begin{align*}
\spec{-N(A)A} = \{s_1(1),s_2(1),\dots,s_d(1)\} 
\end{align*}

\end{corollary}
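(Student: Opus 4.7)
The plan is to read off the factorization of the characteristic polynomial from Theorem \ref{thm:char_poly} and then interpret the value at $\lambda=1$ of each factor using Theorem \ref{thm:conv_correct}. Concretely, since the coefficient matrices $C_0,\dots,C_{p-1}$ are simultaneously triangularizable by some invertible $Q$ with $T_i \eqdef Q^{-1}C_i(A)Q$ upper triangular, Theorem \ref{thm:char_poly} gives
\begin{align*}
\chi_M(\lambda) = (-1)^{pd}\prod_{j=1}^d\left(\lambda^p - \sum_{k=0}^{p-1}\lambda^k (T_k)_{jj}\right).
\end{align*}
Thus the natural candidates are
\begin{align*}
s_j(\lambda) \eqdef \lambda^p - \sum_{k=0}^{p-1}\lambda^k (T_k)_{jj},\qquad j=1,\dots,d,
\end{align*}
each of which is, by inspection, monic of degree $p$. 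This immediately yields the first claim, up to an overall sign $(-1)^{pd}$ which may be absorbed into the product by pairing it with the leading coefficients if one insists on equality of $\chi_M$ with the product of the $s_j$.

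For the second claim, I would evaluate each $s_j$ at $\lambda=1$:
\begin{align*}
s_j(1) = 1 - \sum_{k=0}^{p-1}(T_k)_{jj}.
\end{align*}
Theorem \ref{thm:conv_correct} then tells us exactly which numbers the sums $\sum_{k=0}^{p-1}(T_k)_{jj}$ are, namely
\begin{align*}
\left\{\sum_{k=0}^{p-1}(T_k)_{jj} \;\Big|\; j\in[d]\right\} = \spec{I+N(A)A}.
\end{align*}
Hence the set $\{s_1(1),\dots,s_d(1)\}$ is obtained by applying the map $\mu\mapsto 1-\mu$ to $\spec{I+N(A)A}$. Since for any matrix $B$ one has $\spec{I-B} = \{1-\mu:\mu\in\spec{B}\}$, taking $B = I+N(A)A$ gives $I-B = -N(A)A$, and therefore
\begin{align*}
\{s_1(1),\dots,s_d(1)\} = \spec{-N(A)A},
\end{align*}
which is the desired identity.

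There is no real obstacle here: the corollary is purely bookkeeping that combines the factorization of $\chi_M$ under simultaneous triangularization (Theorem \ref{thm:char_poly}) with the algebraic constraint on the sum of coefficient matrices forced by convergence to $-A^{-1}\bb$ (Theorem \ref{thm:conv_correct}). The only thing one must be slightly careful about is the sign convention $(-1)^{pd}$ in $\chi_M$ and the choice to make each $s_j$ monic, but this is a cosmetic issue rather than a substantive one.
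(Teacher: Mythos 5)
Your argument is correct and is exactly the argument the paper intends: it reads off the factorization of $\chi_M$ from Theorem~\ref{thm:char_poly}, evaluates each monic factor $s_j$ at $\lambda=1$, and uses the identity $\{\sum_{k}(T_k)_{jj}\}=\spec{I+N(A)A}$ from Theorem~\ref{thm:conv_correct} together with $\spec{I-B}=\{1-\mu:\mu\in\spec{B}\}$ to obtain $\{s_j(1)\}=\spec{-N(A)A}$. Your remark about the harmless $(-1)^{pd}$ sign is apt; the paper elides it silently.
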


\chapter{Lower Bounds on the Iteration Complexity of \texorpdfstring{$p$}{p}-CLI Optimization Algorithms} \label{chapter:lower_bounds}

In this chapter we derive lower bounds on the iteration complexity of $p$-CLI optimization algorithms. The technique used for accomplishing this goal is discussed in detail in the following.\\

Let $\cA$ be a $p$-CLI optimization algorithm and let us denote its iteration matrix and inversion matrix by $M(A)$ and $N(A)$, respectively.
Recall that we concluded the last chapter by showing that there exist $d$ monic polynomials $s_1(\lambda),\dots,s_d(\lambda)$ of degree $p$, such that
\begin{align} \label{eq:poly_and_spec}
\spec{-N(A)A} &= \{s_1(1),s_2(1),\dots,s_d(1)\} \\
\spec{M(A)} &= \bigcup_{i=1}^d \spec{s_j(z)}
\end{align}
for any $(A,b)\in\posdefun{d}{\Sigma}$.\\
Consequently, one might ask what can be deduced about the spectral radius (see \defref{eq:poly_spec}) of some $p$-degree monic polynomial $q(z)$, for which  $q(1)$ is fixed and given? \\

In the following section we give a partial, yet quite satisfactory, answer for this question by using elementary properties of polynomials. Loosely speaking, we show that if $q(1)$ is much smaller or much bigger than one, then the spectral radius must be relatively big. We are therefore led to the following question: How far from one is the spectrum  of $-N(A)A$ ? Not too surprisingly,  the answer is: It depends! \\
\\
Indeed, we saw in section \ref{section_spec_algo} that  the inversion matrix of Newton method is define by
\begin{align*}
N(A)=-A^{-1}
\end{align*}
From which we conclude that  $\spec{-N(A)A}$ is exactly $\{1\} $.   \\
Nevertheless, the computational complexity of inverting $d\times d$ regular matrices is at least of an order $\Omega(d^2)$ (one must at least read all the entries of $A$), thus rendering  impractical methods which use this operation for large $d$.\\
That being said, if $N(A)$ does not approximate $-A^{-1}$ well, then the spectral radius of the iteration matrix $\rhoM$ would be large, e.g. $N(A)=0$ for which $\spec{-N(A)A}=\{0\}$. This is the reason why we decided to call $N(A)$ inversion matrix. As a matter of fact, many optimization algorithms can be seen as different strategies as to how should one balance the computational labor between two tasks: Computing an approximated inverse $N(A)$ for a given $d\times d$ matrix $A$ vs. executing many iterations of the algorithms.\\

The technique should be clear by now: various structural limitation imposed on the inversion matrix induces constraints on $\spec{-N(A)A}$, which in turn implies lower bounds on the spectral radius of the iteration matrix. In the sequel we will be inspecting very simple structures, such as scalar matrices and diagonal ones. We will assume that $\Sigma=[\mu,L]$ for some $L>\mu>0$ throughout this chapter. It is important to note that these results are likely to be obtained for a wider family of inversion matrices,  such as inversion matrices whose each coordinate depends on a relatively small set of entries of $A$.\\

After deriving lower bounds on the spectral radius of iteration matrix we will establish lower bound on the iteration complexity of $p$-CLI optimization algorithms. Lastly, we address the question whether this bound is tightness.

\section{Lower Bound on the Spectral Radius of Monic Polynomials} \label{section:eco_poly}
In the sequel we tackle the question mentioned in the discussion above: what can be said about the spectral radius of a monic polynomial $q(x)$ of degree $p$, given its evaluation in $x=1$ is some real scalar $r$. \\
We may first formalize this question in the following slightly more general minimization task
\begin{align} \label{opt:comp_poly_min}
\argmin\left\{ \rho(q(z))~ \left|~ q\in\reals[z], q(z) \text{ is monic}, \deg(q) =  p \text{ and } q(1) = r \right.\right\}
\end{align}
Furthermore, given $r\ge0$, let us define the following polynomial  
\begin{align} \label{def:comp_poly}
q_r^*(z) \eqdef \circpar{z-(1-\sqrt[p]{r})}^p
\end{align}
In chapter \ref{chapter:new_algo} this simple polynomials will have a major role in designing efficient algorithms due to its very attractive property - For any $r\ge0$, the polynomial $q_r^*(z)$ is the unique minimizer of \ref{opt:comp_poly_min} (Clearly, $q_r^*(z)$ is of degree $p$ and $q_r^*(1)=r$). As matter of fact, the following theorem states a slightly stronger result.

\begin{lemma} \label{lem:eco_poly}
Suppose $q(z)$ is a monic polynomial of degree $p$ with complex coefficients. Then,
\begin{align*}
\rho(q(z))\le \absval{\sqrt[p]{\absval{q(1)}}-1} \iff q(z)=q_r^*(z)
\end{align*}
\end{lemma}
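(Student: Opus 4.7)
I would begin by factoring $q$ over $\bC$ as $q(z) = \prod_{i=1}^p (z - z_i)$, and set $\rho \eqdef \rho(q) = \max_i \absval{z_i}$ and $r \eqdef \absval{q(1)} = \prod_{i=1}^p \absval{1 - z_i}$. The ``$\Leftarrow$'' direction is immediate: the $p$ roots of $q_r^*$ all coincide at $1 - \sqrt[p]{r}$, so $\rho(q_r^*) = \absval{1 - \sqrt[p]{r}}$ and $q_r^*(1) = r$, giving equality in the stated inequality.

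For ``$\Rightarrow$'' the plan has two stages. First, prove the a priori bound $\rho \ge \absval{\sqrt[p]{r} - 1}$, which together with the hypothesis will force equality. Apply the two-sided triangle inequality $1 - \absval{z_i} \le \absval{1 - z_i} \le 1 + \absval{z_i}$ to each factor of $q(1)$; since $\absval{z_i} \le \rho$, taking the product over $i = 1, \dots, p$ yields $\sqrt[p]{r} \le 1 + \rho$ in all cases, and $\sqrt[p]{r} \ge 1 - \rho$ whenever $\rho \le 1$ (and when $\rho > 1$ the estimate $1 - \sqrt[p]{r} \le 1 < \rho$ is trivial). Either way $\absval{\sqrt[p]{r} - 1} \le \rho$, which the hypothesis promotes to equality.

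In the second stage I would analyze the equality case by splitting on the sign of $\sqrt[p]{r} - 1$. When $\sqrt[p]{r} > 1$, equality reads $\prod_i \absval{1 - z_i} = (1 + \rho)^p$; since every factor is bounded above by $1 + \absval{z_i} \le 1 + \rho$ and $1 + \rho > 0$, the product attaining its uniform upper bound forces each factor to attain it, i.e.\ $\absval{z_i} = \rho$ and $\absval{1 - z_i} = 1 + \absval{z_i}$ simultaneously. The equality case of the triangle inequality then forces $z_i$ to be a non-positive real, and together with $\absval{z_i} = \rho$ this pins down $z_i = -\rho$ for every $i$; therefore $q(z) = (z + \rho)^p = q_r^*(z)$. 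The case $\sqrt[p]{r} < 1$ is symmetric, using the lower triangle inequality to conclude $z_i = +\rho$ for all $i$. The boundary $\sqrt[p]{r} = 1$ collapses to $\rho = 0$, giving $q(z) = z^p = q_1^*(z)$.

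The main obstacle is the propagation-of-equality step in the lower-bound branch ($\sqrt[p]{r} < 1$): one needs $1 - \rho$ to be \emph{strictly} positive so that $\prod_i(1 - \absval{z_i}) = (1 - \rho)^p$ with each factor lying in $[0, 1-\rho]$ forces every factor to equal $1 - \rho$. This is delivered automatically once we know $\rho = 1 - \sqrt[p]{r}$ together with $\sqrt[p]{r} > 0$; the genuinely delicate sub-case is $r = 0$, which puts $1$ among the roots and degenerates the factorwise propagation, and I would dispose of it by a direct factorisation argument using that the remaining $p-1$ roots must again satisfy the hypothesis of the lemma for the quotient $q(z)/(z-1)$.
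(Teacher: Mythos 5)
Your argument follows essentially the same route as the paper's: factor $q$ over $\bC$, bound each factor $\absval{1-z_i}$ via the forward or reverse triangle inequality, observe that the hypothesis forces the whole chain of inequalities to collapse to equality, and invoke the equality case of the triangle inequality to place every root at the single real point $1-\sqrt[p]{r}$. You present this as an a priori lower bound $\rho(q)\ge\absval{\sqrt[p]{r}-1}$ followed by an equality analysis, while the paper writes the full chain $r \le \prod(1+\absval{\zeta_i}) \le \prod(1+\absval{\sqrt[p]{r}-1}) = r$ (and its mirror for $r<1$) in one pass and reads the equalities off directly; the content is the same, and for $r>0$ both branches of your proof are correct.

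The gap is exactly where you suspected: $r=0$. Your proposed repair --- pass to the quotient $q(z)/(z-1)$ and apply the lemma to it --- cannot work, because the quotient inherits no hypothesis of the required form; its value at $z=1$ is $q'(1)$, which has no relation to $r$, and even when the quotient happens to satisfy the lemma's hypothesis its characterization is centered at $1-\sqrt[p-1]{\absval{q'(1)}}$, not at $1$. In fact the lemma as stated is \emph{false} at $r=0$ for $p\ge2$: take $q(z)=(z-1)z^{p-1}$, so $q(1)=0$, $r=0$, $\absval{\sqrt[p]{r}-1}=1=\rho(q)$, yet $q\ne q_0^*(z)=(z-1)^p$. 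What breaks is precisely the propagation-of-equality step you isolated: $\prod_i(1-\absval{z_i})=0$ only forces \emph{one} factor to vanish, not all of them. The paper's own proof shares this gap (the parenthetical note that $r\in[0,1)$ implies $\absval{\zeta_i}\le1$ ensures the factors $1-\absval{\zeta_i}$ are nonnegative but does not rescue equality propagation when the product is zero), though it is harmless downstream, since $r=-\nu\mu$ is strictly positive wherever the lemma is applied. The clean fix is to restrict the lemma to $q(1)\ne0$ rather than patch the $r=0$ subcase; the lower-bound statement in Corollary~\ref{cor:comp_poly} survives unchanged.
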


\begin{proof}
The 'if' implication is obvious. Let us prove the 'only if' part.\\
By the fundamental theorem of algebra $q(z)$ has $p$ roots. Let us denote these roots by $\zeta_1,\zeta_2,\dots,\zeta_p$ . Equivalently,
\begin{align*}
q(z) = \prod_{i=1}^p (z-\zeta_i)
\end{align*}
Let us denote $r\eqdef\absval{q(1)}$. If $r\ge 1$ we get 
\begin{align} \label{ineq:ttt}
r &= \absval{\prod_{i=1}^p (1-\zeta_i)} = \prod_{i=1}^p \absval{1-\zeta_i} \le \prod_{i=1}^p (1+\absval{\zeta_i}) \nonumber\\&\le \prod_{i=1}^p (1+\absval{\sqrt[p]{r}-1}) 
= \prod_{i=1}^p (1+ \sqrt[p]{r}-1) =  r
\end{align}
Consequently, \ineqref{ineq:ttt} is, in fact, equality. Therefore, 
\begin{align} \label{eq:eq_roots_poly}
\absval{1-\zeta_i}=1+\absval{\zeta_i}=\sqrt[p]{r},\quad \forall i\in[p]
\end{align}
Now, for any two complex numbers $w,z\in\bC$ it holds that 
\begin{align*}
\absval{w+z}=\absval{w}+\absval{z} \iff \text{Arg}(w)=\text{Arg}(z)
\end{align*}
Using this fact in the first equality of \eqref{eq:eq_roots_poly}, we get that $\text{Arg}(-\zeta_i)=\text{Arg}(1)=0$, i.e., $\zeta_i$ are negative real numbers. Writing $-\zeta_i$ in the second equality of  \eqref{eq:eq_roots_poly} instead of $\absval{\zeta_i}$, yields $1-\zeta_i=\sqrt[p]{r}$. Thus, concludes this part of the proof.\\

The proof for $r\in[0,1)$ follows along the same lines by employing the reverse triangle inequality,
\begin{align*}
r &= \prod_{i=1}^p\absval{ 1-\zeta_i} \ge 
\prod_{i=1}^p \circpar{1 - \absval{ \zeta_i} }
\ge 
\prod_{i=1}^p \circpar{1 - \absval{ \sqrt[p]{r}-1} }\\
&= \prod_{i=1}^p \circpar{1 - (1- \sqrt[p]{r})}=r 
\end{align*}
Note that we implicitly used the fact that $r\in[0,1)\implies \absval{\zeta_i}\le1$ for all $i$.
\end{proof}

Thus, we have shown that, in a certain sense, the spectral content of $q_r^*(z)$ is  the most 'economic'. Indeed,   for $r\ge0$ the spectral radius of any other monic polynomial of the same degree whose value at $z=1$ equals $q_r^*(1)$, must be at least the spectral radius of $q_r^*(z)$, namely, $\rho(q(z))\ge \absval{\!\sqrt[p]{r}-1\!}$.\\
For real monic polynomials, if $r\le0$ then the mere fact that
\begin{align}
\lim_{z\in\reals,  z\to\infty} q(z) = \infty 
\end{align}
together with the Mean-Value theorem implies that $\rho(q(z))\ge1$, which is sufficiently big for our purposes. The following corollary summarizes the this observation together with last Lemma.

\begin{corollary} \label{cor:comp_poly}
Let $q(z)$ be a monic polynomial of degree $p$.\\
If $q(1)\ge0$, then $$\rho(q(z))\ge\absval{\!\sqrt[q]{q(1)}-1\!}$$ otherwise if $q(1)<0$ , and if in addition the coefficients of $q(z)$ are real, then, $$\rho(q(z))>1$$
These lower bounds are tight and attainable by $q_r^*(z)$  defined in \ref{def:comp_poly}.
\end{corollary}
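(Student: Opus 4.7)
The corollary packages two separate observations from the discussion preceding it, so my plan is to dispatch the two cases independently and then verify tightness.

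For the first case, where $q(1) \geq 0$, I would simply invoke \lemref{lem:eco_poly} directly. Since $|q(1)| = q(1)$, the contrapositive of the lemma gives: unless $q(z) = q^*_{q(1)}(z)$, we must have $\rho(q(z)) > |\sqrt[p]{q(1)} - 1|$, and for $q(z) = q^*_{q(1)}(z)$ we have equality. Either way, $\rho(q(z)) \geq |\sqrt[p]{q(1)} - 1|$. This step is essentially a restatement of the lemma.

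For the second case, where $q(1) < 0$ and $q$ has real coefficients, I would invoke the Intermediate Value Theorem. Because $q$ is monic of degree $p \geq 1$, the leading term dominates for large real $z$, so $\lim_{z \to +\infty} q(z) = +\infty$. Combined with $q(1) < 0$ and continuity of $q$ on the real line, there exists a real root $\zeta \in (1, \infty)$. Since $\zeta$ is in particular a root of $q$, we conclude $\rho(q(z)) \geq |\zeta| > 1$. The real-coefficients hypothesis is what lets me evaluate $q$ as a real-valued function on the real line; without it, the sign of $q(1)$ carries no topological information about the location of the roots.

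Finally, to establish tightness, I would check by direct computation that $q_r^*(z) = (z - (1 - \sqrt[p]{r}))^p$ realizes equality in the first case and has modulus at least $1$ in the degenerate boundary case: its sole root (with multiplicity $p$) is $1 - \sqrt[p]{r}$, giving $\rho(q_r^*) = |\sqrt[p]{r} - 1|$; and $q_r^*(1) = (\sqrt[p]{r})^p = r$, as required. The two cases of the corollary are essentially independent, and neither step involves any real obstacle; the only subtlety worth flagging is that the bound $\rho > 1$ in the second case is strict and is fundamentally weaker in form than the first case's bound (it does not scale with $|q(1)|$), which will matter later when this corollary is combined with \corref{cor:ess_for} to derive the actual lower bound on iteration complexity.
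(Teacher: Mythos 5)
Your proposal is correct and follows essentially the same route as the paper: for $q(1)\ge 0$ you invoke \lemref{lem:eco_poly} (plus the direct computation $\rho(q_r^*)=|\sqrt[p]{r}-1|$, $q_r^*(1)=r$) to get the non-strict inequality, and for $q(1)<0$ with real coefficients you use the Intermediate Value Theorem on the monic real polynomial to locate a real root $\zeta>1$. The paper's own discussion immediately preceding the corollary does exactly this (calling IVT the ``Mean-Value theorem''), and your observation that the strict bound $\rho>1$ comes from the strict sign $q(1)<0$ cleans up a minor looseness in the paper's wording, which only asserts $\rho\ge1$ for $r\le0$.
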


\begin{figure}[h]
  \centering
     \includegraphics[width=0.7\textwidth]{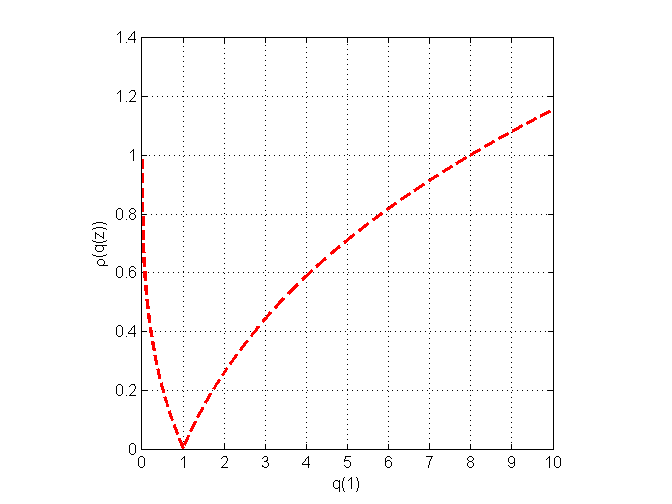}
  \caption{A lower bound on the spectral radius of any monic polynomial $q(z)$ of degree 3, based on its evaluation at $z=1$.}
\end{figure}

\begin{remark}
The requirement that the coefficients of $q(z)$ should be real  is inevitable.  To see why, consider the following polynomial, 
\begin{align*}
u(z)  = \circpar{z-\circpar{1-0.5e^{\frac{i\pi}{3}}}}^3
\end{align*}
Although $u(1)=\circpar{1-\circpar{1-1/2e^{\frac{i\pi}{3}}}}^3 = -1/8\le0$, it holds that $\rho(u(1))<1$. the moduli of all roots is strictly smaller than $1$. Indeed, not all the coefficients of $u$ are real. Notice that the claim does hold for degree $\le3$, regardless of the additional assumption on the coefficients of $u(z)$.
\end{remark}

\section{Scalar Inversion Matrix} \label{section_non_dep}
Let $\cA$ be a $p$-CLI algorithms, let $M(A)$ be its iteration matrix and $N(A)$ be its inversion matrix. Assume further that $N(A)$ is a scalar matrix, i.e., $N(A)=\nu(A)I_d$ for some scalar-valued function $\nu(A)$.\\
In the sequel we show a lower bound on the spectral radius of the iteration matrix of $\cA$. Let $B\in\posdef{d}{[\mu,L]}$ be a positive definite matrix, such that $\mu,L\in\spec{B}$, e.g. $\diag{L,\mu,\dots,\mu}$. The condition number $Q$ of any such matrix is exactly $L/\mu$. Let us denote $\nu\eqdef\nu(B)$.  Clearly, $$-\nu\{\mu,L\} \subseteq \spec{-\nu B} = \spec{-N(B)B}$$
Thus,  by \eqref{eq:poly_and_spec}  one concludes that there exist two real monic polynomials $s_\mu(z),s_L(z)$, both of degree $p$, such that,
\begin{align} \label{eq:poly_at_one}
s_\mu(1) &= -\nu\mu\\
s_L(1) &= -\nu L
\end{align}
and such that 
\begin{align*}
\spec{s_\mu(z)}\bigcup\spec{s_L(z)}\subseteq\spec{M(B)}
\end{align*}
Hence
\begin{align}  \label{ineq:spec_two_poly} 
\rho(M(B)) \ge \max\left\{\rho(s_\mu(z)),\rho(s_L(z))\right\}
\end{align}
Establishing  \eqref{eq:poly_at_one}, we may now apply \corref{cor:comp_poly} on $s_\mu(z),s_L(z)$ to get an explicit lower bound from the last inequality.\\
\\
Note that if $\nu\ge0$ then $s_\mu(1),s_L(1)\le 0$, thus by \corref{cor:comp_poly} 
\begin{align*}
\rho(s_\mu(z)),\rho(s_L(z))\ge1
\end{align*}
Plugging it in \ineqref{ineq:spec_two_poly} yields,
\begin{align*}
\rho(M(B))\ge 1
\end{align*}
Likewise, if $\nu \le \frac{-2^p}{L}$  then again by \corref{cor:comp_poly} 
\begin{align*}
\rho(s_L(z))\ge \sqrt[p]{-\nu L}-1\ge1
\implies \rho(M(B))\ge1
\end{align*}
Both ranges violate the assumption regarding the convergence of $\cA$. Thus, from now on we may assume  that $\nu\in(0,\frac{-2^p}{L})$.\\

We split the rest of the proof into three different cases:
\begin{enumerate} 
\item $-1/L\le\nu<0$ - In which case,
\begin{align*}
&\frac{-\mu}{L}\le \nu\mu\\
\implies&
\frac{\mu}{L}\ge -\nu\mu\\
\implies&
\sqrt[p]{\frac{\mu}{L}} \ge \sqrt[p]{-\nu\mu}\\
\implies&
-\sqrt[p]{\frac{\mu}{L}} \le -\sqrt[p]{-\nu\mu}\\
\implies&
1-\sqrt[p]{\frac{\mu}{L}} \le 1-\sqrt[p]{-\nu\mu}
\end{align*}
Therefore
\begin{align*}
\rho(M(B))&\ge\rho(s_\mu(z)) \ge 1- \sqrt[p]{-\nu \mu}  \ge 
 1-\sqrt[p]{\frac{\mu}{L}} = \frac{ \sqrt[p]{Q} -1 }{\sqrt[p]{Q}} 
\end{align*}
When $\nu=-1/L$ then the last inequality becomes tight.
	
\item $-1/\mu < \nu <-1/L$ -  By \ineqref{ineq:spec_two_poly}
\begin{align} \label{ineq:spec_range_one}
\rho(M) &\ge \max \left\{ \sqrt[p]{-\nu L} -1, 1-\sqrt[p]{-\nu \mu} \right\} \nonumber\\
 &\ge \min_{\nu' \in (-1/L, -1/\mu)}\max \left\{ \sqrt[p]{-\nu' L} -1, 1-\sqrt[p]{-\nu' \mu} \right\} 
\end{align}
Since $\sqrt[p]{-\nu L} -1$ monotonically decreases with $\nu$ and $1-\sqrt[p]{-\nu \mu}$ monotonically increases with $\nu$, the minimizer of the last expression $\nu^*$
is attained when both arguments are equal (for otherwise, we could make $\nu$ slightly bigger$\backslash$smaller).\\
Thus,
\begin{align} \label{eq:con_lb_minimizer}
&\sqrt[p]{-\nu^* L} -1= 1-\sqrt[p]{-\nu^* \mu}\nonumber\\
\implies& \nu^* = -\circpar{\frac2{\sqrt[p]{ L} +\sqrt[p]{\mu} }}^p 
\end{align}
Note that $-\nu^*$ interpolates the harmonic mean of $\frac{1}{L}$ and $\frac{1}{\mu}$ with their geometric mean as $p\to\infty$. Now, plugging in $\nu^*$ into \ineqref{ineq:spec_range_one} yields,
\begin{align*}
&\rho(M) \ge 1- \sqrt[p]{\circpar{\frac2{\sqrt[p]{ L} +\sqrt[p]{\mu} } }^p\mu} \\
\implies& \rho(M) \ge  \frac{\sqrt[p]{ L} -\sqrt[p]{\mu} }{\sqrt[p]{ L} +\sqrt[p]{\mu} }= \frac{\sqrt[p]{ Q} -1 }{\sqrt[p]{Q} +1}
\end{align*}

\item $\frac{-2^p}{L}<\nu  \le -1/\mu $ - First, note that in order for this range to be valid, one must assume that $2^p>\frac{L}{\mu}=Q$. Now, since\\
\begin{align*}
		\frac{-1}{\mu}\ge \nu 
\implies \sqrt[p]{\frac{L}{\mu}}-1\le \sqrt[p]{-\nu L}-1
\end{align*}
Hence,
\begin{align*}
\rho(M(B)) &\ge  \rho(s_L(z)) \ge \sqrt[p]{-\nu L}  -1 \ge \sqrt[p]{\frac{L}{\mu}}-1 = \sqrt[p]{Q}-1
\end{align*}

\end{enumerate}
To conclude, regardless of the actual value of $\nu$ we have,
\begin{align*}
\rho(M(B))\ge \min\left\{1,\frac{\sqrt[p]{Q}-1}{\sqrt[p]{Q}} ,\frac{\sqrt[p]{Q}-1}{\sqrt[p]{Q}+1},\sqrt[p]{Q}-1 \right\} = \frac{\sqrt[p]{Q}-1}{\sqrt[p]{Q}+1}
\end{align*}
Thus, the convergence rate of any $p$-CLI method whose inversion matrix is scalar must be bigger or equal to 
\begin{align*}
\frac{\sqrt[p]{Q}-1}{\sqrt[p]{Q}+1}
\end{align*}
when applied on $B$ (see \corref{cor:lb_ic_explicit} for the analogous lower bound on the iteration complexity).

\section{Diagonal Inversion Matrix}
In the sequel we prove that allowing a more general structure of inversion matrices, namely diagonal matrices, does not necessarily lead to a faster convergence rate. In particular, we show that for any $p$-CLI optimization algorithm $\cA$ whose inversion matrix is diagonal there exists a quadratic function for which $\cA$ obtains a convergence rate which is no better than the rate obtained by $p$-CLI optimization algorithms with scalar inversion matrix, namely 
\begin{align*}
\frac{\sqrt[p]{Q}-1}{\sqrt[p]{Q} +1}
\end{align*}

Let $\cA$ be a $p$-CLI optimization algorithm, let $M(A)$ and $N(A)$ be its iteration matrix and its inversion matrix, respectively. Assume further that $N(A)$ is a diagonal matrix. Let 
\begin{align}
B = \mymat{ \frac{L+\mu}{2} & \frac{L-\mu}{2} \\ \frac{L-\mu}{2} & \frac{L+\mu}{2}}
\end{align}
note that $B$ is positive definite matrix and that $\spec{B}=\set{\mu,L}$. We wish to derive a lower bound on $\rho(M(B))$. To this end, denote 
\begin{align*}
N\eqdef N(B) = \mymat{\alpha & 0 \\ 0 & \beta}
\end{align*}
where $ \alpha,\beta\in\reals$ and denote the eigenvalues of $-NB$ by $\lambda_1(\alpha,\beta), \lambda_2(\alpha,\beta)  $. \\
Following similar arguments to the scalar case, we have that both eigenvalues of $-NB$ must be strictly positive. Furthermore, since $\cA$ is assumed to be convergent, we have $\rho(M)<1$, which together with \corref{cor:comp_poly} and \ineqref{ineq:spec_two_poly} implies 
\begin{align} \label{ineq:spec_diagonal}
\rho(M) \ge \min_{\alpha,\beta}\max\left\{ \absval{\sqrt[p]{\lambda_1(\alpha,\beta)}-1},\absval{\sqrt[p]{\lambda_2(\alpha,\beta) }-1}\right\}
\end{align}
A straightforward calculation shows that the eigenvalue of $-NB$ are
\begin{align*} \label{eq:eigs_of_diagonal_inversion}
\lambda_1(\alpha,\beta), \lambda_2(\alpha,\beta)   &=  \frac{-(\alpha+\beta)(L+\mu)}{4} \pm
\sqrt{ \circpar{\frac{(\alpha+\beta)(L+\mu)}{4}}^2 -  \alpha\beta L\mu}\\
&=\frac{-(\alpha+\beta)(L+\mu)}{4} \pm
\sqrt{ (\alpha+\beta)^2 \frac{(L-\mu)^2}{16} +  \frac{1}{4} (\alpha-\beta)^2  L\mu}
\end{align*}
Thus showing that for any $\alpha,\beta$, it holds for $\nu=\frac{\alpha+\beta}{2}$ that 
\begin{align*}
\max\left\{ \absval{\sqrt[p]{\lambda_1(\alpha,\beta)}-1},\absval{\sqrt[p]{\lambda_2(\alpha,\beta) }-1}\right\} &\ge 
\max\left\{ \absval{\sqrt[p]{\lambda_1(\nu,\nu)}-1},\absval{\sqrt[p]{\lambda_2(\nu,\nu) }-1}\right\}\\
&=
\max\left\{ \absval{\sqrt[p]{-\nu \mu}-1},\absval{\sqrt[p]{-\nu L }-1}\right\}
\end{align*}
Therefore, \ineqref{ineq:spec_diagonal} becomes
\begin{align} 
\rho(M) \ge \min_{\gamma}\max\left\{ \absval{\sqrt[p]{\lambda_1(\gamma,\gamma)}-1},\absval{\sqrt[p]{\lambda_2(\gamma,\gamma) }-1}\right\}
\end{align}
Now, according to the very same analysis done in the scalar case, it holds that 
\begin{align*}
\rho(M(B)) \ge \frac{\sqrt[p]{Q}-1}{\sqrt[p]{Q}+1}
\end{align*}
Note that although we have proven the claim for $d=2$. The general case follows easily by embedding $B$ as a principal sub-matrix in some matrix of higher dimension.\\

\section{Is This Lower Bound Tight?} \label{section:is_this_tight}
The previous sections may be briefly stated by the following.
\begin{corollary} \label{cor:lb_ic_explicit}
Let $\cA$ be a $p$-CLI optimization algorithm for $\posdefun{d}{[\mu,L]}~(L>\mu>0)$. If the inversion matrix of $\cA$ is diagonal, then there exists a $B\in\posdefun{d}{[\mu,L]}$ such that 
\begin{align} \label{ineq:lb_conv_rate22}
\rho(M(B)) \ge \frac{\sqrt[p]{Q}-1}{\sqrt[p]{Q}+1}
\end{align}
where $M(B)$ denotes the corresponding iteration matrix.\\
In particular, by \thmref{thm:ic_cli} 
\begin{align} \label{eq:IC_lb}
\ICA = \Omega\circpar{\frac{\sqrt[p]{Q}-1}{2}\ln(1/\varepsilon)}
\end{align}

\end{corollary}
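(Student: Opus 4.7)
The plan is to assemble the bound from the two immediately preceding sections, which already contain essentially all of the required work; what is left for this corollary is mainly book-keeping: choose a concrete worst-case $B$, embed it into the required dimension, and then translate the spectral radius bound into an iteration complexity bound via Theorem~\ref{thm:ic_cli}.

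First I would handle the case $d=2$ by taking $B$ to be exactly the matrix
\begin{align*}
B = \mymat{ \frac{L+\mu}{2} & \frac{L-\mu}{2} \\ \frac{L-\mu}{2} & \frac{L+\mu}{2}}
\end{align*}
used in the diagonal analysis. This $B$ is symmetric, positive definite, and has $\spec{B}=\{\mu,L\}$, so $B\in\posdefun{2}{[\mu,L]}$ with condition number $Q=L/\mu$. The section on diagonal inversion matrices has already shown (by reducing the two-eigenvalue minimax over $(\alpha,\beta)$ to the scalar minimax over $\nu=(\alpha+\beta)/2$, and then invoking the scalar-case analysis) that for any convergent $p$-CLI optimization algorithm $\cA$ with diagonal $N(A)$ one has $\rho(M(B))\ge (\sqrt[p]{Q}-1)/(\sqrt[p]{Q}+1)$. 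For general $d\ge 2$ I would simply take $B$ to be the block-diagonal matrix $B\oplus \lambda I_{d-2}$ for some $\lambda\in[\mu,L]$. Corollary~\ref{coro:cli_spec} (via simultaneous triangularization) shows that $\spec{\cM}$ decomposes as a union over the diagonal entries of the triangularized coefficient matrices, so the eigenvalues associated with the $2\times 2$ principal block appear in $\spec{M(B\oplus \lambda I_{d-2})}$, and \ineqref{ineq:lb_conv_rate22} survives. If one prefers to avoid any triangularization subtlety, the same bound can be reached directly by repeating the argument of the diagonal section with the $d\times d$ matrix $B\oplus \lambda I_{d-2}$ and the corresponding $d\times d$ diagonal inversion matrix; the eigenvalues controlling the bound are exactly those coming from the $2\times 2$ top-left block.

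Next, to obtain \eqref{eq:IC_lb}, I would plug $\rho = (\sqrt[p]{Q}-1)/(\sqrt[p]{Q}+1)$ into the lower bound of Theorem~\ref{thm:ic_cli}, namely $\ICA = \Omega\bigl(\tfrac{\rho}{1-\rho}\ln(1/\epsilon)\bigr)$. A brief calculation gives
\begin{align*}
\frac{\rho}{1-\rho} = \frac{(\sqrt[p]{Q}-1)/(\sqrt[p]{Q}+1)}{2/(\sqrt[p]{Q}+1)} = \frac{\sqrt[p]{Q}-1}{2},
\end{align*}
which yields the advertised iteration complexity bound, once one observes that the worst-case $B\in\posdefun{d}{[\mu,L]}$ realizing \ineqref{ineq:lb_conv_rate22} is by definition admissible in the maximum defining $\ICA$.

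The only genuine subtlety, and what I expect to be the main obstacle if one wanted to be completely pedantic, is verifying that the lower bound of Theorem~\ref{thm:ic_cli} does apply here with a single fixed $B$ and some initialization $\bx^0$: this requires the existence of an initial point whose projection against the relevant left eigenvector $\br$ from Theorem~\ref{thm:conv_rate} is nonzero, which is supplied by Theorem~\ref{thm:cli_ex} and the discussion culminating in \ineqref{ineq:x_lb}. Once this is invoked, the rest is just the two displayed lines above, and the proof reduces to a one-line application of Theorem~\ref{thm:ic_cli} and Lemma~\ref{lemma:spec_to_rate} to the spectral radius lower bound already established in the diagonal section.
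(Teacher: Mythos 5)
Your proposal is correct and follows essentially the same approach as the paper: the paper itself presents this corollary as a summary of the preceding two sections (the scalar and diagonal inversion-matrix analyses), uses exactly the $2\times 2$ matrix $B$ you chose together with the remark that the general $d$ case "follows easily by embedding $B$ as a principal sub-matrix in some matrix of higher dimension," and passes from spectral radius to iteration complexity via Theorem~\ref{thm:ic_cli} with the same $\rho/(1-\rho) = (\sqrt[p]{Q}-1)/2$ computation. Your flagging of the initialization-point subtlety (handled by Theorem~\ref{thm:cli_ex} and the discussion leading to \ineqref{ineq:x_lb}) is a reasonable observation, but it is already built into the statement of Theorem~\ref{thm:ic_cli}, which the paper invokes directly.
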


Well, is this lower bound tight? the answer heavily depends on how rich the structure of the coefficient matrices is.\

For $p=1$ this bound is attained by FGD (see \ref{spec:fgd}). For $p=2$ things are getting much more interesting. It can be shown that the spectral radius of the Heavy Ball method is,
\begin{align} \label{eq:spec_hb}
\Theta\circpar{\frac{\sqrt[2]{Q}-1}{\sqrt[2]{Q}+1}}
\end{align}
Hence, by \thmref{thm:ic_cli} 
\begin{align} \label{eq:opt_IC_lb}
\IC_{[\mu,L]}\circpar{\text{HB},\epsilon} = \bigO{\frac{\sqrt[2]{Q}-1}{2}\ln(1/\varepsilon)}
\end{align}
Although HB obtains the best possible iteration complexity in its class, it has a major disadvantage, namely, when applied on general smooth strongly-convex functions, one can guarantee local convergence only (see Section 3.2.1 in \cite{polyak1987introduction}).  
Another disadvantage is that it is not clear a-priori when does the phase of linear convergence with factor as in \eqref{eq:spec_hb} starts. In contrast to this, AGD attains a linear convergence with a slightly worse factor of 
\begin{align} \label{eq:spec_agd}
\Theta\circpar{\frac{\sqrt{Q}-1}{\sqrt{Q}}}
\end{align}
for any smooth strongly-convex function, start from the very first iteration. In which case the iteration complexity is
\begin{align}
\IC_{[\mu,L]}\circpar{\text{AGD},\epsilon} = \bigO{\circpar{\sqrt{Q}-1}\ln(1/\varepsilon)}
\end{align}

Furthermore, recall that both the Heavy Ball method and AGD has a scalar inversion matrix
\begin{align*}
N_{\text{HB}} &= \frac{-4}{\circpar{\sqrt{L}+\sqrt{\mu}}^2}I_d\\
N_{\text{AGD}} &= \frac{-1}{L}I_d 
\end{align*}
Surprisingly enough, these constants perfectly match the optimal constant in the first range $[-1/L,0)$ and in the second range $(-1/\mu,-1/L)$ of $\nu I$, as shown in section (\ref{section_non_dep}). In chapter \ref{section_schemes} we will show an even more surprising result, namely, how can one recover AGD, the Heavy Ball method and perhaps other optimization algorithms by carefully choosing $\nu$ and a matching iteration matrix. \\

We further remark that the lower bound mentioned in \cite{nesterov2004introductory} is $$\frac{\sqrt{Q}-1}{4}\ln(1/\varepsilon)$$
Hence, the lower bound shown in \ref{eq:IC_lb} is slightly better. More importantly, the latter lower bound holds for sufficiently large number of iterations, as opposed to the lower bound in \cite{nesterov2004introductory}  which is true for $\bigO{d}$ number of iterations. That being said, \ref{eq:IC_lb} is proved to be valid for a much more restricted family of algorithms. Put differently, both lower bounds combined show that for essentially any number of iterations, and for a relatively wide family of algorithms the iteration complexity of $2$-CLI optimization methods is  
\begin{align*}
\Theta\circpar{\sqrt{Q}\ln(1/\epsilon)}
\end{align*}

What about $p>2$? In section \ref{section:new_algo} we will see that, as long as no restriction is imposed on the coefficients matrices, the lower bound presented in \ineqref{ineq:lb_conv_rate22} is tight, i.e., for any $p\in\bN$ there exists a $p$-CLI optimization algorithm with scalar inversion matrix whose convergence rate is exactly
\begin{align}
\Theta\circpar{\frac{\sqrt[p]{Q}-1}{\sqrt[p]{Q}+1}}
\end{align}
That being said, the disadvantage in this apparently ideal algorithm is that in order to execute it we need to have a good approximation of the hessian's spectrum of the quadratic function under consideration. This leads to coefficients matrices which are hard to compute and may cost more than the actual task of finding a minimizer. So, can this lower bound be obtained by $p$-CLI optimization algorithm whose inversion matrix is diagonal and whose iteration matrix can be efficiently computed? Before answering this question, one must define 'efficiently'. If by efficiently-computed iteration matrix we mean coefficient matrices which are linear in $A$, namely
\begin{align*}
C_k(A) &= \alpha A + \beta I_d,~k=0,1,\dots,p-1
\end{align*}
then we conjecture that this lower bound is not tight. This may be formally stated by the following. 
\begin{conjecture*} 
Let $\cA$ be a $p$-CLI optimization algorithm whsoe inversion matrix is diagonal and whose iteration matrix $M(A)$ consists of linear coefficient matrices, that is 
\begin{align*}
C_k(X) = \alpha_k X + \beta_k I_D
\end{align*}
for some real scalars $\alpha_k,\beta_k\in\reals$. Then 
\begin{align*}
\exists A\in\posdefun{d}{[\mu,L]},~\rho(M(A)) \ge \frac{\sqrt{Q}-1}{\sqrt{Q}+1}
\end{align*}
where $Q\eqdef L/\mu$.
\end{conjecture*}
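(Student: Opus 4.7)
My plan is to reduce the conjecture to a purely extremal problem about rational functions, and then attack that problem via conformal analysis. The starting point is Corollary~\ref{cor:ess_for}: for any diagonal $A \in \posdefun{d}{[\mu,L]}$ whose spectrum contains $\mu$ and $L$, the characteristic polynomial of $M(A)$ factors as $\prod_{j=1}^d s_{\lambda_j}(z)$, where
\[
s_\lambda(z) \;=\; b(z) - \lambda\, a(z),\qquad b(z) = z^p - \sum_{k=0}^{p-1}\beta_k z^k,\qquad a(z) = \sum_{k=0}^{p-1}\alpha_k z^k,
\]
with $b$ monic of degree $p$ and $\deg a\le p-1$. Combining $C_k(X)=\alpha_k X+\beta_k I_d$ with Theorem~\ref{thm:conv_correct} gives $N(A) = (\sum_k\alpha_k)\,I_d + (\sum_k\beta_k - 1)\,A^{-1}$, and requiring this to be diagonal for every $A\in\posdefun{d}{[\mu,L]}$ (including non-diagonal ones) forces $\sum_k\beta_k=1$ and hence $N(A) = S_\alpha I_d$ with $S_\alpha := \sum_k\alpha_k$. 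Since $\rho(M(A))=\max_j\rho(s_{\lambda_j})$, the conjecture reduces to showing that for every such pair $(a,b)$,
\[
\sup_{\lambda\in[\mu,L]}\rho\bigl(b(z)-\lambda\, a(z)\bigr) \;\ge\; \frac{\sqrt{Q}-1}{\sqrt{Q}+1}.
\]

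Next I would recast this as a geometric question about the rational map $\phi(z)=b(z)/a(z)$ on $\hat{\bC}$, which has total degree $p$ (its pole at $\infty$ has order $p-\deg a\ge 1$). The statement ``$\sup_\lambda\rho(s_\lambda)<r$'' is precisely ``$\phi^{-1}([\mu,L])\subseteq\{|z|<r\}$''; equivalently, $\phi$ sends $\{|z|\ge r\}\cup\{\infty\}$ into $\hat{\bC}\setminus[\mu,L]$. Let $U$ denote the connected component of $\hat{\bC}\setminus\phi^{-1}([\mu,L])$ containing $\infty$, so $U\supseteq\{|z|>r\}$. The restriction $\phi|_U:U\to\hat{\bC}\setminus[\mu,L]$ is a proper holomorphic map of some degree $d\in\{1,\dots,p\}$, and since the target is simply connected, Riemann--Hurwitz gives $\chi(U) = d - R$ where $R\ge 0$ counts the ramification points of $\phi|_U$ with multiplicity.

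The core case $d=1$ is clean: $\phi|_U$ is a conformal isomorphism, and Riemann mapping uniqueness together with the classification of Möbius automorphisms of $\hat{\bC}\setminus[\mu,L]$ fixing $\infty$ (only the identity and the reflection $w\mapsto L+\mu-w$) pins $\phi|_{\{|z|>r\}}$ down to the scaled Joukowski map $\psi_r(z)=\tfrac{L+\mu}{2}+\tfrac{L-\mu}{4}(z/r+r/z)$. Analytic continuation then forces $\phi\equiv\psi_r$ globally on $\hat{\bC}$; since $\psi_r$ has degree $2$, this case is only consistent with $p=2$, and matching $\psi_r(\{|z|=r\}) = [\mu,L]$ via $(1-r)^2/\alpha=\mu$, $(1+r)^2/\alpha=L$ yields $r=(\sqrt{Q}-1)/(\sqrt{Q}+1)$ exactly. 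For $p\ge 3$ one is automatically forced into $d>1$, and this is the main obstacle: one must show that the additional $d-1$ ramification points sitting inside $\{|z|\le r\}$ cannot be exploited to shrink the admissible radius below the Joukowski value. My attempt would combine the conformal-capacity identity $\operatorname{cap}([\mu,L])=(L-\mu)/4=|\phi'(\infty)|\cdot\operatorname{cap}(\hat{\bC}\setminus U)$ with a Schwarz-reflection / symmetrization argument across the circle $|z|=r$, aiming to show that any interior branching is effectively ``redundant'' for the exterior conformal modulus. Making this symmetrization step quantitative for all $p$ --- essentially producing a one-parameter family refinement of Lemma~\ref{lem:eco_poly}, in which the extremal polynomial $q_r^*$ is replaced by an extremal \emph{pencil} $b-\lambda a$ --- is where I expect the genuine difficulty to lie, and is presumably the reason the statement remains a conjecture.
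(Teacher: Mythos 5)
The statement you set out to prove is left \emph{open} in the paper: it appears verbatim as an unproved conjecture, supported only by the information--theoretic lower bound (valid for the first $O(d)$ iterations) and by numerical evidence. There is therefore no proof in the paper to compare against, and the relevant question is whether your argument closes the gap. Your reduction is correct and coincides exactly with the paper's second boxed ``Polynomials'' conjecture, up to the change of variable $b(z)\mapsto z^p-b(z)$: for diagonal $A$ with eigenvalues $\lambda_j$ the matrices $C_k(A)=\alpha_kA+\beta_kI$ are simultaneously diagonal, Corollary~\ref{coro:cli_spec} gives $\rho(M(A))=\max_j\rho\bigl(b(z)-\lambda_ja(z)\bigr)$, and Theorem~\ref{thm:conv_correct} forces $b(1)=0$ and $a(1)=\nu$. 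Your observation that linear $C_k$ plus a diagonal $N(A)$ force $N(A)=(\sum_k\alpha_k)I$, i.e.\ a \emph{scalar} inversion matrix, is a correct sharpening. The $d=1$ conformal analysis is also sound and is exactly the Chebyshev/Joukowski picture behind Heavy Ball, recovering $r_0=(\sqrt{Q}-1)/(\sqrt{Q}+1)$.

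The gap is where you say it is, namely $d>1$ (forced for $p\ge 3$), but two concrete points in your sketch already fail there. First, the capacity identity $\operatorname{cap}([\mu,L])=|\phi'(\infty)|\cdot\operatorname{cap}(\hat{\bC}\setminus U)$ requires $\phi|_U$ univalent \emph{and} a simple pole at $\infty$ (i.e.\ $\deg a=p-1$). In general, with $\phi(z)\sim cz^m$ at $\infty$ where $m=p-\deg a$ and $c$ is the reciprocal of the leading coefficient of $a$, pulling back the Green's function of $\hat{\bC}\setminus[\mu,L]$ gives instead
\begin{align*}
\operatorname{cap}([\mu,L])=|c|\cdot\operatorname{cap}\bigl(\hat{\bC}\setminus U\bigr)^m,
\end{align*}
and since $|c|$ is not controlled by the single constraint $a(1)=\nu$, this relation bounds $r\ge\operatorname{cap}(\hat{\bC}\setminus U)$ from below only in terms of a free parameter, not in terms of $Q$ --- so the capacity bookkeeping alone cannot deliver the conjectured radius. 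Second, Riemann--Hurwitz for $\phi|_U:U\to\hat{\bC}\setminus[\mu,L]$ of degree $d$ gives $R=d-\chi(U)=d-2+(\text{number of boundary components of }U)$, and nothing places those ramification points inside $\{|z|\le r\}$; the ``interior branching to symmetrize across $|z|=r$'' premise of your symmetrization step is therefore not automatic. What is still missing is a quantitative extremal lemma for the \emph{pencil} $b-\lambda a$, $\lambda\in[\mu,L]$ --- the one-parameter analogue of Lemma~\ref{lem:eco_poly} --- and until that is supplied, your argument is a promising program rather than a proof, as you yourself conclude.
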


This conjecture may be reduced to a pure question in the analytic thoery of polynomials,
\begin{conjecture*} [Polynomials] \label{conj:linear_coeff_poly}
For any $0<\mu<L$ and for any $p-1$ degree polynomials $a(z),b(z)$ such that $b(1)=1$ there exists $\eta\in[\mu,L]$ such that
\begin{align*}
\rho(z^p - (\eta a(z) + b(z))) &\ge \frac{\sqrt{L/\mu} -1}{\sqrt{L/\mu}+1}
\end{align*}
\end{conjecture*}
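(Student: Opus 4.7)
The plan is to attack the conjecture by reformulating it as an avoidance problem for a rational function and then invoking the Schwarz lemma in the conformal model of $\hat{\mathbb{C}}\setminus[\mu,L]$ given by the (scaled) Chebyshev map. As a first step, since $b(1)=1$ forces $q_\eta(1)=-\eta a(1)$, if $a(1)\ge 0$ then $q_\eta(1)\le 0$ for every $\eta>0$, and Corollary \ref{cor:comp_poly} already yields $\rho(q_\eta)>1>r$. So I may assume $a(1)<0$, hence $q_\eta(1)>0$ throughout $[\mu,L]$.

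Consider next the rational function $R(z):=(z^p-b(z))/a(z)$: one has $R(1)=0$, and $z_0$ is a root of $q_\eta$ if and only if $R(z_0)=\eta$. Suppose for contradiction that $\rho(q_\eta)<r$ for every $\eta\in[\mu,L]$. By compactness of $[\mu,L]$ and continuity of the roots in $\eta$, there exists some $r'<r$ with $R^{-1}([\mu,L])\subseteq\{|z|\le r'\}$, so $R$ avoids $[\mu,L]$ on the closed exterior $\{|z|\ge r\}$.

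The heart of the argument is now a Schwarz-lemma estimate through the Chebyshev conformal map. Let $\psi:\hat{\mathbb{C}}\setminus[\mu,L]\to\{|w|>1\}$ be the inverse of the scaled Joukowski map sending $\infty\mapsto\infty$; then $\psi(w)\sim\frac{4w}{L-\mu}$ at infinity and, by direct computation (using $(1+r^2)/r=2(L+\mu)/(L-\mu)$), $\psi(0)=-1/r$. Assuming momentarily that $a$ has no zeros on $\{|z|\ge r\}$, the composition $g:=\psi\circ R$ maps $\{|z|\ge r\}$ into $\{|w|\ge 1\}$ with $g(\infty)=\infty$ of multiplicity $k:=p-\deg a\ge 1$. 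Passing to $\tilde g(u):=1/g(1/u)$, a holomorphic map from $\{|u|<1/r\}$ to $\{|v|<1\}$ with a zero of order $k$ at the origin, Schwarz's lemma (with multiplicity) gives $\absval{\tilde g(u)}\le(r|u|)^k$. Evaluating at $u=1$:
\[\absval{\tilde g(1)}=\absval{1/\psi(R(1))}=\absval{1/\psi(0)}=r\le r^k,\]
which, since $r<1$, forces $k=1$ (so $\deg a=p-1$) together with equality in the Schwarz bound. The equality case pins down $\tilde g(u)=-ru$, hence $R(z)=\psi^{-1}(-z/r)$; since $\psi$ maps $[\mu,L]$ bijectively onto the unit circle, it follows that $R^{-1}([\mu,L])=\{|z|=r\}$, so the roots of $q_\eta$ lie \emph{on} the circle $|z|=r$ for every $\eta\in[\mu,L]$, directly contradicting $\rho(q_\eta)<r$.

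The principal obstacle is the parenthetical assumption that $a$ has no zeros on $\{|z|\ge r\}$. A zero of $a$ there is either a \emph{universal} root of $q_\eta$ (immediate contradiction to the standing hypothesis) or a pole of $R$; in the pole case, the image of a small punctured neighborhood about the pole is an annular neighborhood of $\infty$ that generically sweeps through $[\mu,L]$, forcing forbidden preimages. Making this quantitative for poles lying arbitrarily close to the circle $|z|=r$ requires a careful annulus-size estimate in terms of the residue, and the cleanest route I anticipate is a Hurwitz/Rouch\'e-type perturbation argument combined with upper semicontinuity of $\max_{\eta\in[\mu,L]}\rho(q_\eta)$ in the coefficients of $(a,b)$, reducing the problem to the generic case in which all zeros of $a$ lie strictly inside $\{|z|<r\}$, whereupon the Schwarz argument above applies verbatim.
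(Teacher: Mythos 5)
Note first that the statement you are proving is labeled a \emph{conjecture} in the thesis: the author offers no proof, supporting it only by numerical experiments and by consistency with the Nemirovsky--Yudin information-based lower bound. So there is no paper proof to compare your argument against; if your argument is sound it settles the open question raised in Section~\ref{section:is_this_tight}, and as far as I can see, it is.

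The obstacle you flag at the end, zeros of $a$ in the exterior region, is not a genuine gap, and the Hurwitz--Rouch\'e reduction you sketch is unnecessary. Suppose $a(z_0)=0$ with $\absval{z_0}\ge r$. If also $z_0^p-b(z_0)=0$, then $z_0$ is a root of every $q_\eta$, so $\rho(q_\eta)\ge r$ already, an immediate contradiction. Otherwise $z_0$ is a pole of $R$, but then $g=\psi\circ R$ is still meromorphic near $z_0$ (it simply has a pole there, since $\psi(\infty)=\infty$ and $R$ stays far from $[\mu,L]$ near $z_0$), so $\tilde g(u)=1/g(1/u)$ is bounded by $1$ on a punctured neighborhood of $u_0=1/z_0$ and extends holomorphically across it with $\tilde g(u_0)=0$. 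Thus $\tilde g$ remains a holomorphic self-map of the disk with a zero of order $k$ at the origin; the additional interior zeros cannot weaken the Schwarz estimate there, and a Blaschke product refinement would only sharpen it. You can in fact streamline the endgame entirely: set $\rho_{\max}:=\max_{\eta\in[\mu,L]}\rho(q_\eta)$, which is attained and satisfies $\rho_{\max}<r$ by compactness and continuity of the roots; then $R$ avoids $[\mu,L]$ on $\set{\absval{z}>\rho_{\max}}$, so $\tilde g$ is a holomorphic map of the open disk of radius $1/\rho_{\max}$ into the unit disk vanishing to order $k\ge1$ at the origin, and Schwarz at $u=1$ gives $r=\absval{\tilde g(1)}\le\rho_{\max}^{\,k}\le\rho_{\max}<r$ outright, with no appeal to the equality/rigidity case. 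The rigidity computation is nonetheless a valuable sanity check: for $p=2$ with the Heavy-Ball coefficients one finds $a(z)=a_1z$ and $b(z)=(1+r^2)z-r^2$, so that $(z^2-b(z))/a(z)=(z-1)(z-r^2)/(a_1z)$ coincides with $\psi^{-1}(-z/r)$ exactly; the extremal configuration of your Schwarz argument is precisely the method that attains rate $r$.
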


This conjecture is mainly supported by the following two observations: By the information-based lower bound mentioned in \ref{opt:sqrtlb}, we know that this conjecture holds true for the first $\bigO{d}$ iterations. Secondly, the conjecture had withstood a vast number of randomly drawn polynomials. The generalization of this conjecture to polynomial expressions of the coefficient matrices to any other degree is straightforward.

\chapter{Designing Optimization Algorithms using Optimal Pairing Schemes} 
\label{chapter:new_algo} 
\label{section_schemes}

Up to this point we have projected various optimization algorithms on the framework of $p$-CLI methods, thereby converting questions regarding convergence properties of these algorithms into questions regarding roots of polynomials. \\
In what follows, we shall head in the opposite direction. First, we design polynomials so as to meet a prescribed set of constraints. Then we derive the corresponding  optimization algorithms. Remarkably enough, by carefully adjusting these polynomials, we derive a novel optimization algorithm which allows better utilization of prior knowledge about quadratic functions as well as 're-discover' FGD, HB, AGD and other optimization algorithms, in an unconventional systematic way that will turn out be rather natural. This line of inquiry is particularly important due to the obscure  origins of AGD, and further emphasizes its algebraic nature.\\

\section{Definitions}
Let $p$ be lifting factor and $N(A)$ be an inversion matrix over $\reals^d$. Any iteration matrix $M(A)$ which, when paired with $N(A)$, yields a valid $p$-CLI optimization algorithm for any $A\in\posdefun{d}{[\mu,L]}$, is said to be a \emph{Matching Iteration Matrix} for $N(A)$. If in addition, $M(A)$ admits the minimal spectral radius among all other matching iteration matrices, then it is said to be an \emph{Optimal Iteration Matrix} for $N(A)$ and  $\circpar{M(A),N(A)}$  is said be an \emph{Optimal Pairing}.\\

Now, consider the following problem: Find an optimal iteration matrix for given lifting factor $p$ and inversion matrix $N(A)$ over $\reals^d$.\\

In the next section we show how to derive the optimal iteration matrix for a given scalar inversion matrix, given that no efficiency constraints are involved. As we mentioned before, the computational cost of obtaining such iteration matrix may be too high. Consequently, in Section \ref{section:lin_coeff_mat} we focus on a specific family of iteration matrices which can be efficiently computed, namely, iteration matrices whose coefficient matrices are linear expressions in $A$. Lastly, we show how FGD,AGD and HB form an instantiation of optimal pairing schemes with various scalar inversion matrices.

\section{Optimal Pairing Schemes of Scalar Inversion Matrices} \label{section:new_algo}
 In the sequel we shall closely inspect the proof of the lower bound shown in \ref{eq:IC_lb} so as to derive a $p$-CLI which admits the same convergence rate. \\
Let $\cA$ be a $p$-CLI optimization algorithm and let us denote its iteration matrix, inversion matrix and its coefficient matrices by $M(A)$, $N(A)$ and $C_0(A), C_1(A),\dots,C_{p-1}(A)$, respectively, and assume that $N(A)=\nu I$ for some real constant $\nu\le0$. Lastly, let $(A,b)\in\posdefun{d}{\Sigma}$ be a given quadratic function.\\
Recall that we showed in \corref{cor:ess_for} that there exist $d$ monic polynomials $s_1(\lambda),\dots,s_d(\lambda)$ of degree $p$, such that
\begin{align} 
\spec{-N(A)A} &= \{s_1(1),s_2(1),\dots,s_d(1)\} \\
\spec{M(A)} &= \bigcup_{i=1}^d \spec{s_j(z)}
\end{align}

It is further implied by \lemref{lem:eco_poly} that $s_j(\lambda)$  attain the lower bound on the spectral radius of polynomials if and only if 
\begin{align} \label{eq:eco_poly_coeff}
s_j(\lambda)=\circpar{\lambda-\circpar{1-\sqrt[p]{-\nu \mu_j}}}^p = \sum_{k=0}^p \binom{p}{k}\circpar{ \sqrt[p]{-\nu\mu_j} -1}^{p-k}  \lambda^{k}
\end{align}
where $\mu_j$ are the corresponding eigenvalues of $A$. Our proof of the lower bound had used the fact the $s_j(\lambda)$ are constrained to have a certain value at $\lambda=1$, by which we concluded that the spectral radius must be large enough. \\
Due to the crucial fact that the coefficients of $s_j(\lambda)$ are nothing but the eigenvalues of the coefficient matrices, ordered arbitrarily, we see that the lower bound is obtained if and only if the eigenvalues of the coefficient matrices perfectly match the coefficients of the polynomials in \eqref{eq:eco_poly_coeff}. The latter condition is easily satisfied as illustrated by the following construction.\\

Since $A$ is a positive definite matrix, there exists an unitary matrix $O$ such that $$\Lambda\eqdef O^\top A O$$ is diagonal. Evidently, the diagonal elements of $-\nu \Lambda$ are the eigenvalue of $-N(A)A$ in an arbitrary order. Let us define the $p$ diagonal matrices $D_0,D_1,\dots,D_{p-1}$ so as to respect \eqref{eq:eco_poly_coeff} 
\begin{align*}
D_k = \mymat{-\binom{p}{k}\circpar{ \sqrt[p]{-\nu \Lambda_{11}} -1}^{p-k} \\ & -\binom{p}{k}\circpar{ \sqrt[p]{-\nu \Lambda_{22}} -1}^{p-k}\\ && \ddots  \\ &&& -\binom{p}{k}\circpar{ \sqrt[p]{-\nu \Lambda_{dd}} -1}^{p-k} }
\end{align*}
The coefficient matrices are correspondingly  defined to be
\begin{align*}
C_k = O D_k  O^\top
\end{align*}
By applying \thmref{thm:conv_correct}, it can be verified  that in such case $s_j(\lambda)$ are consistent with \eqref{eq:eco_poly_coeff} and that for any $\nu\in(-2^p/L,0)$, the $p$-CLI optimization algorithm specified above converges to the minimizer of $\circpar{A,b}$ in a rate of 
\begin{align*}
\rho(M(A)) = \max_{j=1,\dots,d} \absval{ \sqrt[p]{-\nu \mu_j} - 1} 
\end{align*}
Finally, choosing  
\begin{align*}
\nu=-\circpar{\frac2{\sqrt[p]{ L} +\sqrt[p]{\mu} }}^p 
\end{align*}
as in \eqref{eq:con_lb_minimizer} results in the optimal $p$-CLI optimization algorithm for a given $p$ whose iteration complexity is (\ref{eq:opt_IC_lb}). Further assuming that the computational cost grows linearly with the lifting factor $p$, then one may obtain a running time of $\Theta(\ln(Q)\ln(1/\epsilon))$ by properly optimizing $p$. \\
The major drawback of this algorithm is that its implementation requires the spectral decomposition of $A$, which, if no other structural assumptions are imposed, is a harder task than computing the inverse of $A$. Thus, rendering this algorithm impractical in the general case, where good approximations to the spectrum of $A$ are  not in reach. Moreover, any other algorithm which achieve this lower bound exactly must have its coefficient matrices similar to $D_k$ for any $(A,b)\in\posdefun{d}{[\mu,L]}$. Therefore, it is fairly likely that these kind of algorithms would be inefficient in terms of computing the corresponding iteration matrix. \\

A comparison of the performance of various optimization algorithms, including the one discussed above, is provided in the following figure. We measure the accuracy of each algorithm versus the iteration number, where 'accuracy' is defined by  
\begin{align*}
\norm{  -A^{-1}b - U^\top z^k}
\end{align*}
($U$ is as defined in \ref{def:U_def})\\
for $2\times 2$ diagonal matrix $A$ whose diagonal elements are $\mu,L$ and $$b=\mymat{\mu\\L}$$ (Note that in this case $-A^{-1}b=-(1,1)^\top$). Each algorithm is initialized with $z^0=0$.\\
 As can be seen, these experiments reflect the theoretical results presented in the previous chapter, in particular they demonstrate the superiority of the optimization algorithm presented in this section over AGD and HB. 

\begin{center}
\includegraphics[scale=0.6,trim= 0 180 0 200]{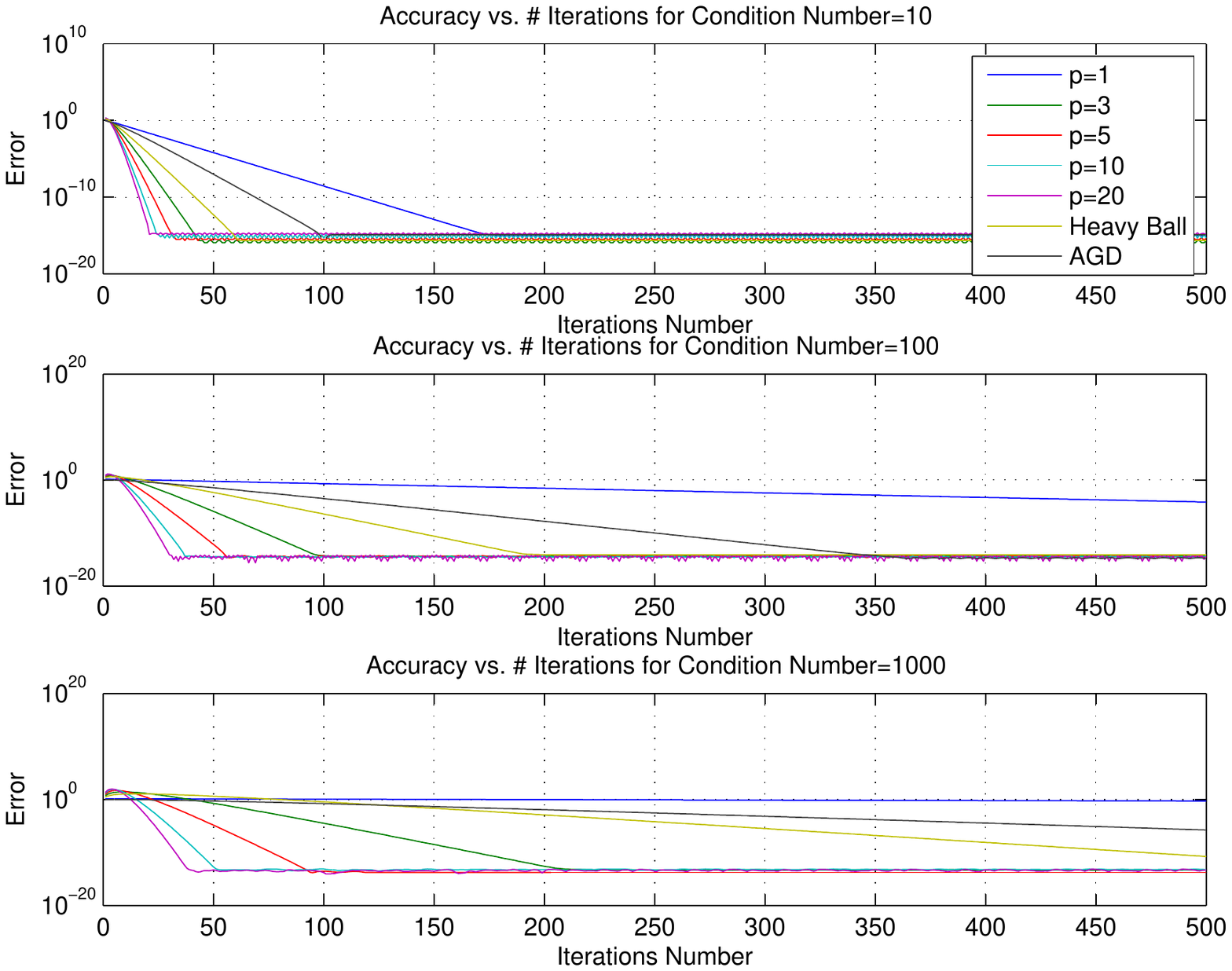}
\end{center}

\section{Linear Coefficient Matrices} \label{section:lin_coeff_mat}
Let $\cM$ be a $p$-CLI method defined by $(M(A),N(A))$. Suppose that $N(A)$ is a scalar matrix denoted by $\nu I_d$, and that $M(A)$ is comprised of linear coefficient matrices $C_0(A),\dots\,C_{p-1}(A)$, i.e., there exist $a_0,\dots,a_{p-1}\in\reals$ and  $b_0,\dots,b_{p-1}\in\reals$ such that
\begin{align*}
C_k(X) = a_k X + b_k I_d ,~\quad k=0,\dots,p-1
\end{align*}

Apart from possessing an efficient implementation, $p$-CLI optimization algorithms with linear coefficient matrices and scalar inversion matrix exhibits another very appealing property, i.e. a canonical extension to first-order optimization algorithms. To see this, recall that by consistency 
\begin{align*}
	\sum_{k=0}^{p-1}C_k(A) =  & I + N(A) A,~ A\in\posdefun{d}{[\mu,L]}
\end{align*} 
Which reduces to 
\begin{align*}
	I_d\sum_{k=0}^{p-1} b_k  +  A\sum_{k=0}^{p-1} a_k   &=   I + \nu A,~ A\in\posdefun{d}{[\mu,L]}
\end{align*} 
Hence,
\begin{align} \label{eq:line_coeff_cons}
	\sum_{k=0}^{p-1} b_k &=1 \text{ and } \sum_{k=0}^{p-1} a_k =\nu
\end{align}
Now, each iteration rule of $\cM$ can be equivalently expressed as
\begin{align*}
	\bx^{t} = C_0(A) \bx^{t-p} + C_1(A) \bx^{t-(p-1)} + \dots +C_{p-1}(A) \bx^{t-1} + \nu \bb
\end{align*} 
Substituting $C_k(A)$ for their definitions yields
\begin{align*}
	\bx^{t} = (a_0 A + b_0) \bx^{t-p} + (a_1 A + b_1)\bx^{t-(p-1)} + \dots +(a_{p-1} A + b_{p-1}) \bx^{p-1} + \nu \bb
\end{align*}
Plugging in \eqref{eq:line_coeff_cons} we get 
\begin{align*}
	\bx^{t} &= (a_0 A + b_0) \bx^{t-p} + (a_1 A + b_1)\bx^{t-(p-1)} + \dots +(a_{p-1} A + b_{p-1}) \bx^{t-1} + \circpar{\sum_{t=0}^{p-1} a_t }\bb\\
	&= 	a_0 (A \bx^{t-p} + \bb) + 	a_1 (A \bx^{t-(p-1)} + \bb) +\dots+ 
		a_{p-1} (A \bx^{t-1} + \bb)\\ &\quad+ 
	 b_0 \bx^{t-p}  + b_1 \bx^{t-(p-1)} + \dots + b_{p-1} \bx^{t-1} 
\end{align*}
By substitute $A\bx + \bb$ for its counterpart $\nabla f(\bx)$ we have
\begin{align} \label{eq:nabla_count}
	\bx^{t} &=  \sum_{k=0}^{p-1} b_k \bx^{t-(p-k)} + \sum_{k=0}^{p-1} a_k \nabla f(\bx^{t-(p-k)})
\end{align}

The performance of the canonical extensions is close to what one obtains for the subclass of quadratic functions and may be analyzed using technique similar to the one used in \cite{polyak1987introduction}. We postpone a more through discussion of this matter to future work.\\

We now present a general scheme for designing $p$-CLI optimization algorithms with linear coefficient matrices and scalar inversion matrix.\\ Denote
\begin{align*}
q(z,\eta)= z^p - (\eta a(z) + b(z))
\end{align*}
According to \thmref{thm:conv_correct}, $\cM$ converges to $-A^{-1}\bb$
for any $A\in\posdefun{d}{[\mu,L]}$ and $\bb\in\reals^d$ if and only for all $\eta\in[\mu,L]$
\begin{align} 
q(1,\eta) &= -\nu\eta  \label{eq:lin_cor1} \\
\spec{q(z,\eta)} &\subseteq (-1,1)  \label{eq:lin_cor2}
\end{align}
In which case, the worst convergence rate of $\cM$ is 
\begin{align} \label{opt:minspec}
\max_{\eta\in[\mu,L]} \rho(q(z,\eta))
\end{align}
These equation encapsulate the two principles by  which one may design an optimal pairing. That is, given $\nu\in\reals$ how should one choose $a_k$ and $b_k$ so that \ref{opt:minspec} is minimized, while maintaining conditions \ref{eq:lin_cor1} and \ref{eq:lin_cor2}. In the following sections we show how to solve (\ref{opt:minspec}) for $p=1$ and $p=2$. We shall refer to two different values of $\nu$:
\begin{align}\label{eq:nu1}
\nu=\frac{-1}{L}
\end{align}
 and 
\begin{align}\label{eq:nu2}
\nu=-\circpar{\frac2{\sqrt[p]{ L} +\sqrt[p]{\mu} }}^p
\end{align}
These are the optimal values for the ranges $[-1/L,0)$ and $(-1/\mu,-1/L)$, respectively, as analyzed in Section \ref{section_non_dep}.\\

\section{Lifting Factor 1 - FGD}
Assume $p=1$. With the notation of the previous section we have,
\begin{align*}
q(z,\eta) = z - \eta a _0 - b_0
\end{align*}
For some $a_0,b_0$. In order to satisfy Condition \ref{eq:lin_cor1} for all $\eta\in[\mu,L]$ we have no other choice but to set 
\begin{align*}
a_0 &= \nu\\
b_0 &= 1
\end{align*} 
It can be easily verified that for this choice of $a_,b_0$, condition \ref{eq:lin_cor2} holds for both values of $\nu$: \ref{eq:nu1} and \ref{eq:nu2}. 
Therefore, for $p=1$ the only iteration matrix for which we have convergence is 
$$M(A)=I + \nu A$$
This is exactly the FGD algorithm (see Section \ref{section_spec_algo}), i.e., the corresponding rule of update is,
\begin{align*}
\bx^{k+1}=(I+\nu A) \bx^{k} + \nu \bb
\end{align*}
and its convergence rate \ref{opt:minspec} is bounded from above by
\begin{align*}
\frac{Q}{Q+1}
\end{align*}
for $\nu=-1/L$, and 
\begin{align*}
\frac{Q-1}{Q+1}
\end{align*}
for $\nu=\frac{-2}{\mu+L}$.

\begin{figure}[H]
  \centering
     \includegraphics[width=1\textwidth]{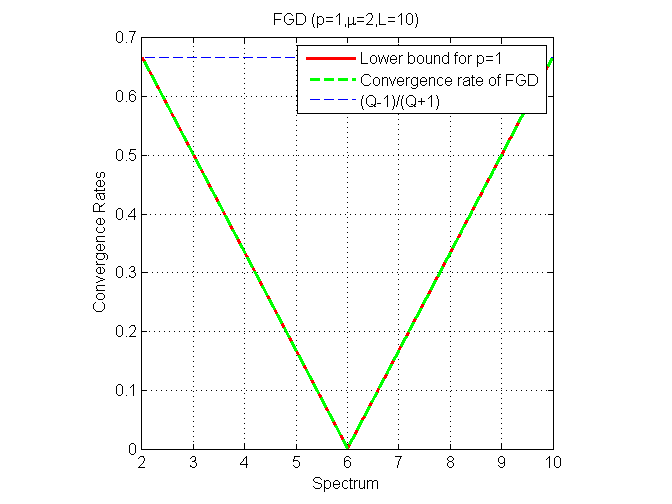}
  \caption{The Convergence rates of Full Gradient Descent vs. spectrum}
\end{figure}

\section{Lifting Factor 2 - AGD and Heavy-Ball}
Assume $p=2$. With the notation of Section \ref{section:lin_coeff_mat}
we have,
\begin{align*}
q(z,\eta)= z^2 - \eta(a_1 z + a_0 ) - (b_1  z + b_0)
\end{align*}
Recall that our goal is to choose $a_1,a_0,b_1,b_0$ so as to minimize 
\ref{opt:comp_poly_min}, while preserving conditions \ref{eq:lin_cor1} and \ref{eq:lin_cor2}. This time $q(z,\eta)$, seen as a function of $\eta$, forms a path of quadratic polynomials. Thus, a natural way to achieve this goal is by choosing the parameters so that $q(z,\mu)$ and $q(z,L)$ will take the form of 'economic' polynomials introduced in Section \ref{section:eco_poly},namely
\begin{align*}
\circpar{z-(1-\sqrt{r})}^2
\end{align*}
for  $r=-\nu\mu$ and $r=-\nu L$, respectively, and hope that for any other $\eta\in(\mu,L)$, the roots of $q(z,\eta)$ would have small moduli. 
This yields the following two equations 
\begin{align*}
q(z,\mu) &= \circpar{z-(1-\sqrt{-\nu\mu)}}^2\\
q(z,L) &= \circpar{z-(1-\sqrt{-\nu L)}}^2
\end{align*}
Plugging in the definition of $q(z,\eta)$ and expanding the r.h.s of these equations above we get,
\begin{align*}
z^2 - ( a_1\mu +b_1)z - (a_0\mu + b_0) &= z^2 -2(1-\sqrt{-\nu\mu})z + (1-\sqrt{-\nu\mu})^2 \\
z^2 - ( a_1  L +b_1)z - (a_0 L+ b_0) &=  z^2 -2(1-\sqrt{-\nu L})z + (1-\sqrt{-\nu L})^2 
\end{align*}
Equivalently, we have the following  system of linear equations,
\begin{align}
- ( a_1\mu +b_1) &= -2(1-\sqrt{-\nu\mu}) \label{eq_params_1}\\
- (a_0 \mu+ b_0) &= (1-\sqrt{-\nu \mu})^2 \label{eq_params_2} \\
- ( a_1 L  +b_1) &= -2(1-\sqrt{-\nu L}) \label{eq_params_3}\\
- (a_0 L+ b_0) &= (1-\sqrt{-\nu L})^2  \label{eq_params_4}
\end{align}
Now, multiply \eqref{eq_params_1} by -1 and add to it \eqref{eq_params_3}. Next, multiply \eqref{eq_params_2} by -1 and add to it \eqref{eq_params_4},
\begin{align*}
 a_1(\mu-L) &= 2\sqrt{-\nu}(\sqrt{L}-\sqrt{\mu})  \\
a_0 (\mu-L) &= (1-\sqrt{-\nu L})^2  -(1-\sqrt{-\nu \mu})^2 
\end{align*}
Thus,
\begin{align*}
a_1 &= \frac{2\sqrt{-\nu}(\sqrt{L}-\sqrt{\mu}) }{\mu-L} 
= \frac{2\sqrt{-\nu}(\sqrt{L}-\sqrt{\mu}) }{(\sqrt{\mu}+\sqrt{L})(\sqrt{\mu}-\sqrt{L})}= \frac{-2\sqrt{-\nu} }{\sqrt{\mu}+\sqrt{L}}\\
a_0  &= \frac{(1-\sqrt{-\nu L})^2  -(1-\sqrt{-\nu \mu})^2 }{\mu-L}
= \frac{(\sqrt{-\nu L} - \sqrt{-\nu \mu})(-2 +\sqrt{-\nu L } +\sqrt{-\nu \mu})}{(\sqrt{\mu}+\sqrt{L})(\sqrt{\mu}-\sqrt{L})}\\
&= \frac{\sqrt{-\nu}(\sqrt{L} - \sqrt{\mu})(-2 +\sqrt{-\nu}(\sqrt{ L } +\sqrt{ \mu}))}
{(\sqrt{\mu}+\sqrt{L})(\sqrt{\mu}-\sqrt{L})}\\
&= \frac{2\sqrt{-\nu} +\nu(\sqrt{ L } +\sqrt{ \mu}))}
{\sqrt{\mu}+\sqrt{L}} = \frac{2\sqrt{-\nu}} {\sqrt{\mu}+\sqrt{L}} + \nu
\end{align*}
Remarkably enough, plugging in $\nu=-1/L$  into the equation above and extracting $b_1,b_0$ accordingly,  yields the exact parameters of AGD ((see Section \ref{section_spec_algo})).  The resulting convergence rate vs. various eigenvalues is,
\begin{figure}[H]
  \centering
     \includegraphics[width=0.8\textwidth]{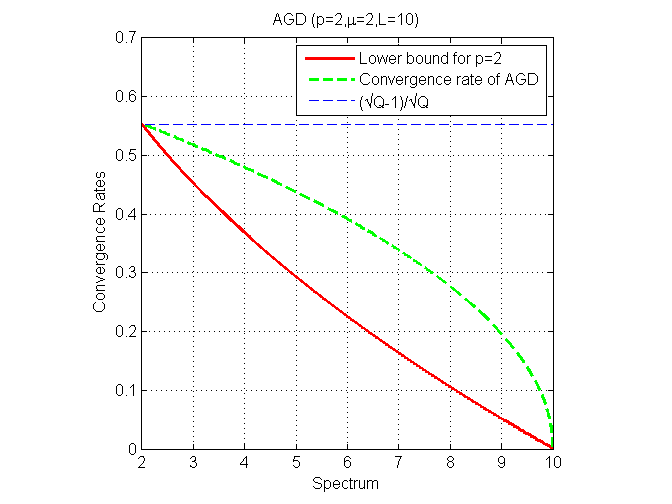}
  \caption{The Convergence rates of the Accelerated Gradient Descent method vs. spectrum}
\end{figure}
Furthermore, by applying the same process with $$\nu = -\circpar{\frac2{\sqrt{L}+\sqrt{\mu}}}^2$$ we get the parameters of Heavy-Ball.
\begin{figure}[H] \label{fig:HB}
  \centering
     \includegraphics[width=0.8\textwidth]{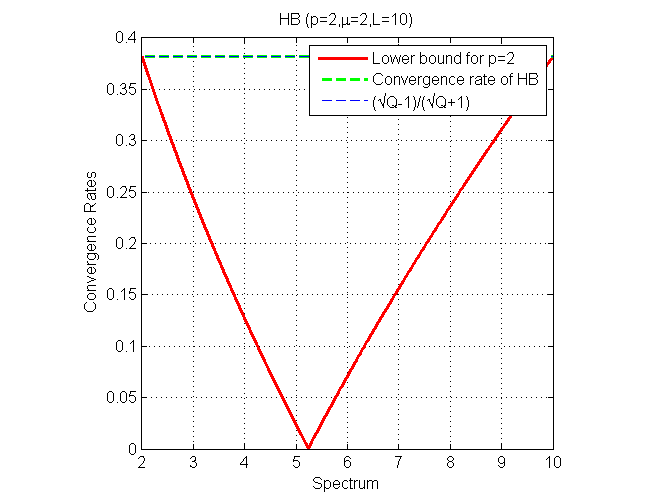}
  \caption{The Convergence rates of the Heavy Ball method vs. spectrum}
\end{figure}

Lastly, it can be seen that condition \ref{eq:lin_cor1} holds for any choice of $\nu$. As for condition \ref{eq:lin_cor2}: Clearly, fitting $q(z,\eta)$ at the edges of the interval $[\mu,L]$ ensures that for $\eta=\mu,L$
\begin{align*}
\rho\circpar{q(z,\eta)}\leq \frac{\sqrt{Q}}{\sqrt{Q}+1}
\end{align*}
for AGD, and 
\begin{align*}
\rho\circpar{q(z,\eta)}\leq \frac{\sqrt{Q}-1}{\sqrt{Q}+1}
\end{align*}
for the Heavy Ball method.

How can one prove that the spectral radius for the intermediate values $(\mu,L)$ does not exceed these bounds at the edges of $[\mu,L]$?  This can be done by re-expressing  $q(z,\eta)$ as a convex combination of $q(z,\mu)$ and $q(z,L)$.\\ For any $\eta\in[\mu,L]$, define $$\theta(\eta)\eqdef\frac{L-\eta}{L-\mu}\in[0,1]$$
It is now straightforward to verify that
\begin{align*}
q(z,\eta) &= \theta(\eta) q(z,\mu)+(1-\theta(\eta)) q(z,L)
\end{align*}
The claim now follows by applying Theorem 4 from \cite{fell1980zeros}.

\section{Lifting Factor \texorpdfstring{$\ge$}{>=} 3}
The conjecture stated in the end of Section \ref{section:is_this_tight} asserts that for any $p$-CLI optimization algorithm $\circpar{p,M(A),N(A)}$, with diagonal inversion matrix and linear coefficient matrices there exists some $A\in\posdefun{d}{[\mu,L]}$ such that 
\begin{align} \label{ineq:sqrt2_lb}
	\rho(M(A))&\ge \frac{\sqrt{Q} - 1}{\sqrt{Q} + 1}  
\end{align}
Thus, it seems hopeless to look for a $p$-CLI optimization algorithm whose iteration and inversion matrices can be computed efficiently and whose convergence rate for any $A\in\posdefun{d}{[\mu,L]}$ is at worst
\begin{align*}
	\frac{\sqrt[p]{Q} - 1}{\sqrt[p]{Q} + 1} 
\end{align*} 
for some $p>2$. \\
That being said, it might be possible to bypass this barrier by focusing on some subclass of $\posdefun{d}{[\mu,L]}$. To this end, recall that the polynomial analogous of this conjecture states that for any $p-1$ degree polynomials $a(z),b(z)$ such that $b(1)=1$ there exists $\eta\in[\mu,L]$ for which 
\begin{align*}
\rho(z^p - (\eta a(z) + b(z))) &\ge \frac{\sqrt{L/\mu} -1}{\sqrt{L/\mu}+1}
\end{align*}
This implies that one might be able to tune $a(z)$ and $b(z)$ so that the convergence rate of the resulting $p$-CLI algorithm on $A\in\posdefun{d}{[\mu,L]}$, whose spectrum does not spread uniformly across $[\mu,L]$, will break \ineqref{ineq:sqrt2_lb}. Note that, as opposed to the $p$-CLI algorithm presented in Section \ref{section:new_algo}, when employing linear coefficient matrices no knowledge regarding the eigenvectors of $A$ is required.   \\

Let us demonstrate this idea for $p=3,\mu = 2$ and $L=100$. Following the exact same derivation used in the last section, let us adjust
\begin{align*}
	q(z,\eta)& \eqdef z^p - (\eta a(z) + b(z))
\end{align*}
so as to have\footnote{As a matter of fact, this regression task reduces to a very simple system of linear equations which can be solved very fast.} 
\begin{align*}
q(z,\mu) &= \circpar{z-(1-\sqrt[3]{-\nu\mu)}}^3\\
q(z,L) &= \circpar{z-(1-\sqrt[3]{-\nu\mu)}}^3
\end{align*}
where 
\begin{align*}
	\nu&=-\circpar{\frac2{\sqrt[3]{ L} +\sqrt[3]{\mu} }}^3
\end{align*}

The resulting 3-CLI optimization algorithm, $\cA_3$, for this choice of parameters is specified as follows
\begin{align*}
	M(A) &\approx \mymat{0_d&I_d&0_d\\0_d&0_d&I_d\\ 0.1958 I_d - 0.0038 A & -0.9850 I_d & 1.7892 I_d - 0.0351 A    } \\
	N(A) &= \nu I_d = -0.0389 I_d
\end{align*}
As illustrated in Figure 5.4, $\cA_3$ obtains a convergence rate of  
\begin{align*}
	\rho_(M_{\cA_3}(A)) \le \frac{\sqrt[3]{Q} - 1}{\sqrt[3]{Q} + 1} 
\end{align*} 
for any $A\in\posdefun{d}{[2,100]}$ which satisfies 
\begin{align*}
	\spec{A}&\subseteq \underbrace{[2,2+\epsilon]\cup[100-\epsilon,100]}_{\hat{\Sigma}},\quad\epsilon\approx 1.5
\end{align*}
thereby outperforms AGD for this family of quadratic functions.

\begin{figure}[H] \label{fig:A_3}
  \centering
	\includegraphics[scale=0.6,trim= 0 180 0 200]{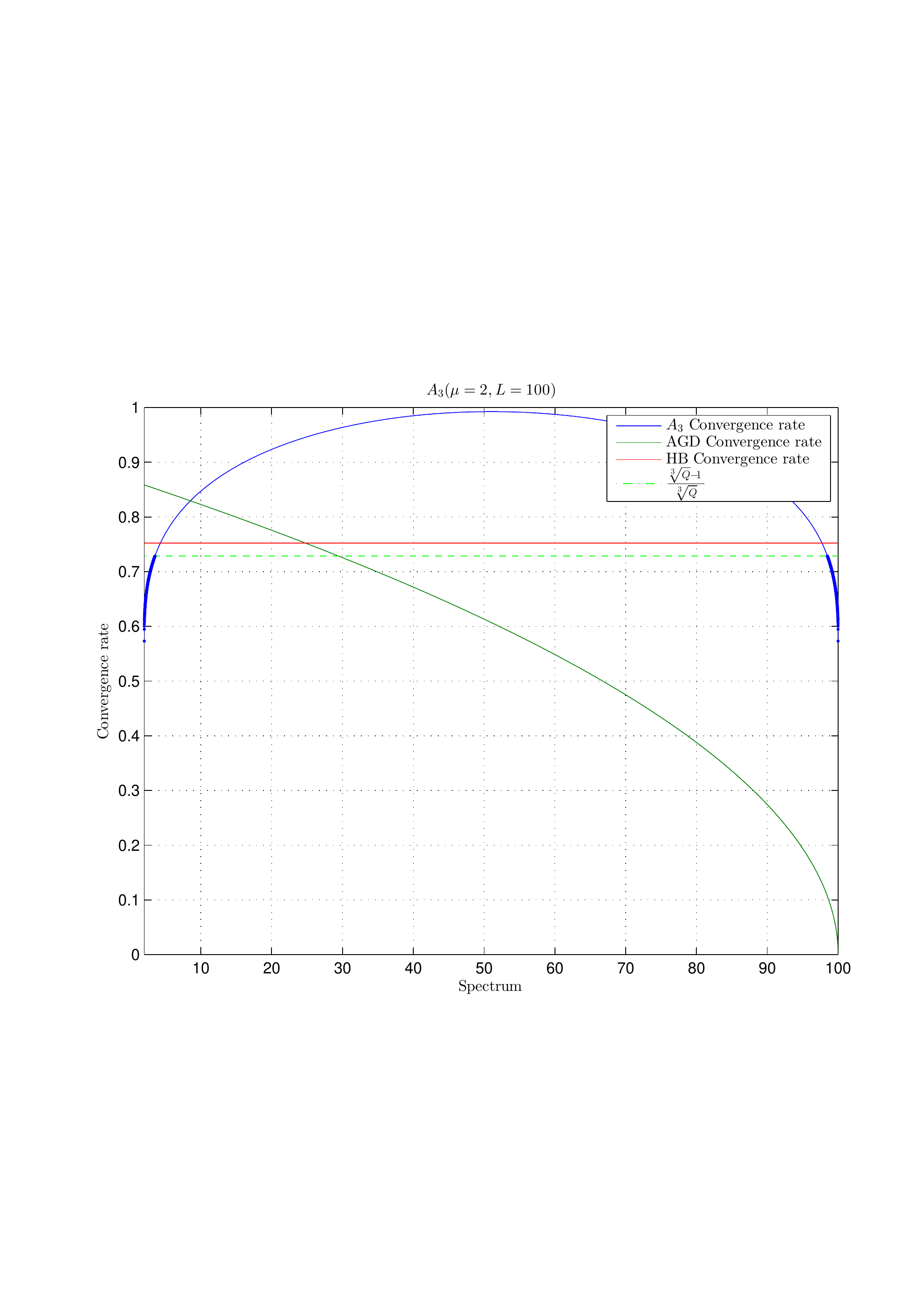}
  \caption{The Convergence rates of $A_3$ 3-CLI optimization algorithm vs. spectrum}
\end{figure}

The following forms a very basic demonstration for the gain in the performance one should expect by using $\cA_3$. We set $\mu =2$ and $L=100$ and define $A$ to be $\Diag{\mu,L}$ rotated by $45^\circ$ counter-clockwise, that is
\begin{align*}
	A &= \mu \mymat{\frac{1}{\sqrt{2}}\\\frac{1}{\sqrt{2}} }
	\mymat{\frac{1}{\sqrt{2}}\\\frac{1}{\sqrt{2}} }^\top
	+
	L \mymat{\frac{1}{\sqrt{2}}\\\frac{-1}{\sqrt{2}} }
	\mymat{\frac{1}{\sqrt{2}}\\\frac{-1}{\sqrt{2}} }^\top
	= \mymat{ \frac{\mu+L}{2} & \frac{\mu-L}{2} \\ \frac{\mu-L}{2} & \frac{\mu+L}{2} }
\end{align*}
We further define  
\begin{align*}
	\bb &= -A \mymat{100\\100}
\end{align*}
Note that $(A,\bb)$ is in $\posdefun{2}{\hat{\Sigma}}$ and that its minimizer is simply $\circpar{100,100}^\top$. The error of a given search point $z\in\reals^{pd}$ is again defined as
\begin{align*}
	\norm{-A^{-1}b  -U^\top z  } 
\end{align*}
($U$ is as defined in \ref{def:U_def})\\
Figure 5.5 shows the decrease in the error of $\cA_3$, AGD and HB along the execution of these algorithms when initialized at $\bx^0 =0$. Clearly, the convergence rate of $\cA_3$ is significantly faster. Indeed, it takes $\sim$40 iterations for $\cA_3$ to obtain an error level of $10^{-6}$, $\sim$80 iterations for HB and $\sim$140 iterations for AGD.

\begin{figure}[H] \label{fig:A_3}
\begin{center}
\includegraphics[scale=0.51,trim= 0 180 0 200]{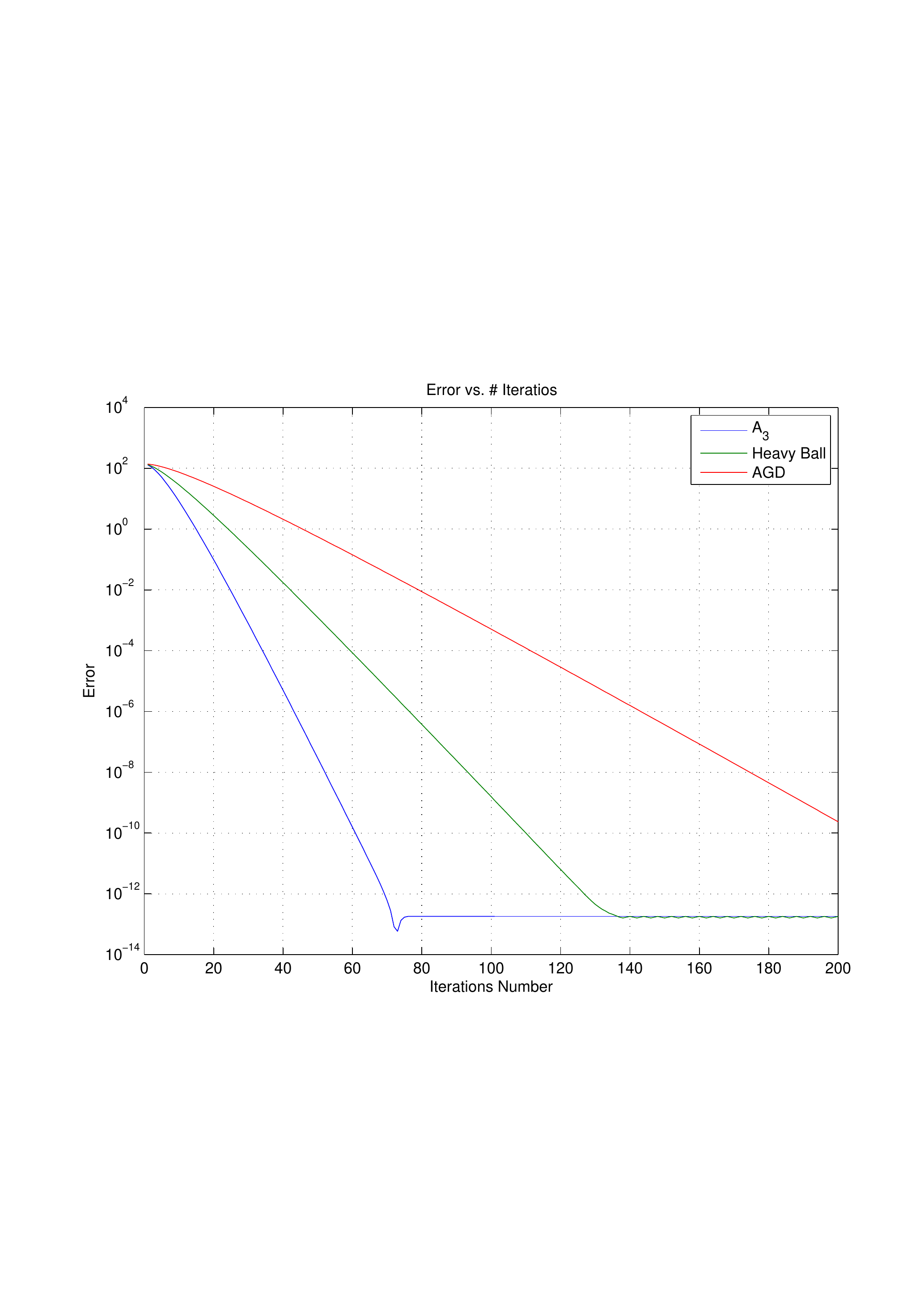}
\end{center}
  \caption{The error of $\cA_3$, AGD and HB as a function of the iteration number when solving a simple quadratic minimization task.}
\end{figure}

It might appear that $\cA_3$ contradicts (\ref{ineq:x_up}), the lower bound which was derived by Nemirovsky and Yudin. Indeed, since $\cA_3$ is a first-order optimization algorithm (see Section \ref{section:lin_coeff_mat}), there must exist some quadratic function $(A_{\text{lb}},\bb_{\text{lb}})\in\posdefun{2}{[\mu,L]}$ for which 
\begin{align*}
	\# \text{ Iterations }  \ge \tilde{\Omega}\circpar{\sqrt{Q}\ln(1/\epsilon)}
\end{align*}
in order to obtain an $\epsilon$-suboptimal solution. Seemingly, this contradicts the upper bound on the iteration complexity of $\cA_3$
\begin{align*}
	\# \text{ Iterations }  \le \tilde{O}\circpar{\sqrt[3]{Q}\ln(1/\epsilon)}  
\end{align*}  

This issue is readily settled by noticing that the upper bound on the iteration complexity of $\cA_3$ holds only for the smaller class of quadratic functions $\posdefun{2}{\hat{\Sigma}}$.
This, in turn, implies that 
\begin{align} \label{ref:bla1}
	(A_{\text{lb}},\bb_{\text{lb}} )\in \posdefun{2}{[\mu,L]} \setminus \posdefun{2}{\hat{\Sigma}}
\end{align}
In other words, we expect successive eigenvalues of $A_{\text{lb}}$ to be relatively close so as to avoid large gaps which may be later exploited by techniques of this sort. It is easier to demonstrate this point using a simplified version 
by Nesterov (see \cite{nesterov2004introductory}). For the sake of simplicity, we consider $1$-smooth $0$-strongly convex functions. In which case, we have
\begin{align*}
	A_{\text{lb}} = \frac{1}{4} \mymat{
	2 & -1 & 0 & \dots && &0 \\
	 -1 & 2 & -1 & 0 & \dots && 0  \\
	0 & -1 & 2 & -1 & 0 & \dots & 0  \\\\
	&&&\ddots\\\\
	0&&\dots&0 &-1& 2 & -1  \\	
	0&&&\dots&0 &-1& 2   \\	
	},~\bb_{\text{lb}} = -\mymat{1\\0\\\vdots\\0}
\end{align*}
As demonstrated by Figure 5.6, $\spec{A_{\text{lb}}}$ densely fills $[\mu,L]$\footnote{
Is is instructive to note that $A_{\text{lb}}$ is, in fact, the standard discrete case central approximation of the second derivative whose spectral decomposition is closely connected to the Chebyshev polynomials of 2nd kind.}, thus confirming (\ref{ref:bla1}).

\begin{figure}[H] \label{fig:NestSpec}
\begin{center}
\includegraphics[scale=0.56,trim= 0 180 0 200]{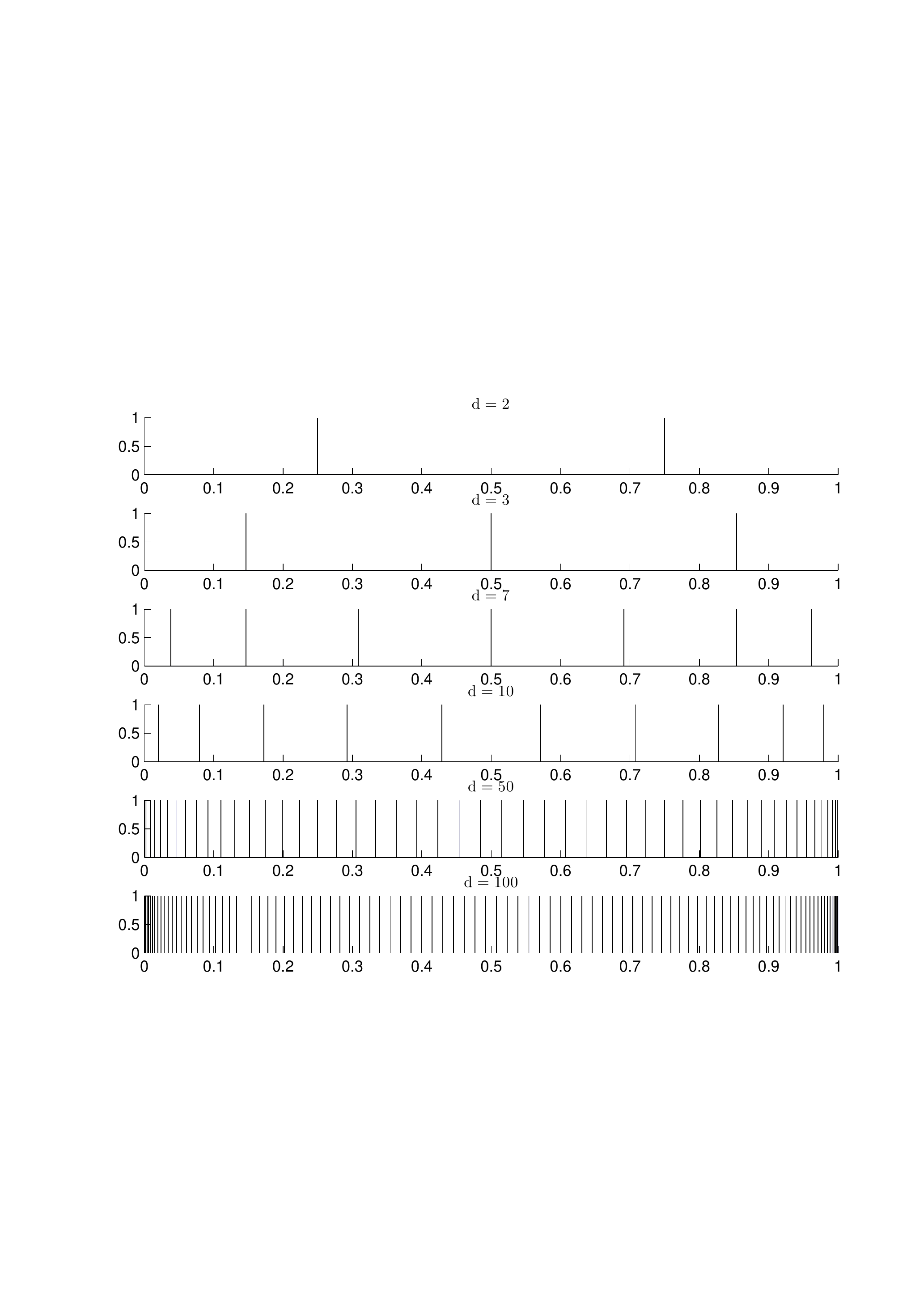}
\end{center}
  \caption{The spectrum of $A_{\text{lb}}$, as used in the derivation of Nesterov's lower bound, for problem space of various dimensions.}
\end{figure}

This technique can may be further generalized to $p>3$ using the same ideas. A different approach would be to use quadratic (or even higher degree) coefficient matrices to exploit other shapes of spectra. In contrast to linear coefficient matrices, it is not clear how one should generalize optimization algorithms resulting from the latter approach to general smooth strongly convex functions. Lastly, the applicability of both approaches heavily depends on how frequent are these types of spectra in real applications.

\chapter{Conclusion and Future Work}
In this monograph we have established a novel framework which forms a bridge between  the world of the optimization algorithms and the world of polynomials, using the theory of linear iterative methods. This framework allowed us to introduce a new perspective on optimization algorithms, derive lower bounds on the convergence rates of existing ones and 're-discover' algorithms which form the cornerstone of  mathematical optimization. We believe that these results are just the tip of the iceberg. Indeed, in what follows we suggest a few research directions for future work.

\begin{description}
\item [Generalizing $p$-CLI Optimization Schemes] Undoubtedly, the most intriguing question is why AGD does generalizes well to general smooth strongly convex functions, and HB does not? How do matrix norms and the spectral radius relate to each other in each case? Answering these questions may pave the way for unified proof of convergence of many optimization algorithms. Outlined schematically, first apply a given optimization algorithm on quadratic functions and then generalize this analysis to any smooth strongly convex function.

\item[Exploiting Spectrum of Various Structures] By carefully designing polynomials, one may be able to gain a significant improvement in the convergence rate of cases where the spectrum is known to obey some structure, such as, large gaps between adjacent eigenvalues. A related question is: Are there cases where obtaining the spectrum of a positive definite matrix is easier than multiplying its inverse by some vector? A positive answer might render the theoretical algorithm presented in Section \ref{section:new_algo} applicable in certain settings.

\item[Constrained Optimization] The essence of this work lies in restricting one's attention to a family of analytically simple functions, i.e., quadratic functions. Is applying the same technique on the constrained counterpart, i.e., quadratic programming, beneficial as well?

\item[Stochastic Setting] Inventing effectively-computed unbiased estimator for the iteration matrix and inversion matrix might give rise for stochastic optimization algorithms, whose steps can be carried out quickly while preserving the original convergence rate.

\item [Expanding the Scope of $p$-CLI Methods] We believe that some assumptions regarding the structure of iteration and inversion matrices can be relaxed so as to broaden the scope of the framework, e.g. to include SAG. Of particular interest is replacing the stationary dynamics assumption with a less restrictive assumption such as periodical dynamics (such as SVRG) or batch dynamics (such as ASDCA). Furthermore, is the simultaneous triangularizablility assumption really necessary? Lastly, 
is there a more intuitive characterization of what does it take for an optimization algorithm to be a $p$-CLI optimization algorithm?

\item[Smoothing Techniques] General convex functions can be made smooth and strongly convex by various modern tools such as the Yosida-Moreau transformation. An instance of a successful implementation of this concept can be found in \cite{guzman2013lower} (For a more general coverage of this topic the reader is referred to \cite{bauschke2011convex}). Do these tools enable us to incorporate optimization algorithms designed for general convex functions in this framework?

\item[Regularized Loss Minimization] By applying the same ideas on quadratic regularized loss minimization (see Section \ref{section:sdca_case_study}), one gets an analogous framework for fundamentals problems in the field of Machine Learning. This may lead to a better understanding of the interplay between the convergence rate and the number of samples of Machine Learning tasks.

\item[Inversion Matrices] In this monograph we consider scalar and diagonal inversion matrices only. What can be said about block diagonal inversion matrices? we believe  that this would not violate the lower bound derived in Chapter \ref{chapter:lower_bounds}. It is likely that this lower bound would withstand any inversion matrix whose entries are weakly dependent on the quadratic function under consideration.

\item[Distributed Computation] It is unclear whether the results presented in this work shed some light as to how one can execute optimization algorithms in distributed systems. For instance, a matrix-vector multiplication can be efficiently computed in parallel. Does this lead to a possible faster execution of $p$-CLI optimization algorithms?

\item[Nonconvex Functions] Some parts of this work are applicable to general, possibly not convex, functions. Is it possible to adapt other parts of this work as well?
\end{description}

\newpage
\appendix 

\chapter{Background}

\section{Linear Algebra} \label{section:app_linear_algebra}
A matrix $A\in\reals^{d\times d}$ is called \emph{Symmetric} if $A_{i,j}=A_{j,i}$, holds for all $i,j\in[d]$. A symmetric matrix $P\in\reals^{d\times d}$ is \emph{Positive Semi definite}, symbolized  $P\succeq0$, if 
\begin{align*} 
x^* P x \ge0 \quad \text{for all nonzero } x\in \bC^d
\end{align*}
and it is \emph{Positive Definite} if 
\begin{align*}
x^* P x >0 \quad \text{for all nonzero } x\in \bC^d
\end{align*}
which we denote by $P\succ 0$.\\
We denote the linear subspace of all symmetric matrices by $\cS^d\subseteq\reals^{d \times d}$. The following relation turns $\cS^d$ into a partial order set (\emph{poset}): 
\begin{align*}
\forall A,B\in\cS^d,\quad B\preceq A \iff  0\preceq A-B
\end{align*}
Note that, if $A$ is a symmetric matrix and $\mu\le L$ are real scalars, then it can be easily verified that 
\begin{align*} 
\mu I_n \preceq A \preceq L I_n 
\end{align*} 
holds if and only if $\spec{A}\subseteq [\mu,L]$. If $\mu>0$ then we may assign a \emph{condition number} $Q\eqdef L/\mu$, for each such matrix. As 

\section{Convex Analysis} \label{section:convex_ana}
The importance of the notion convexity in various fields of Math, Computer Science, Economics, Physics and many others, can not be overstated. That being said, due to limited scope of this work we will be able to take a very brief tour into the theory of Convex Analysis.  Our goal is to establish elementary terminology as well as results which are necessary for our line of inquiry. For a more thorough coverage of this topic see e.g. \cite{nesterov2004introductory,boyd2009convex}.\\

Let $f:\reals^d\to \reals$ be a real-valued function. We say that $f$ is \emph{Convex} if for any $\bx,\by\in \reals^d$ and $\theta\in [0,1]$, 
\begin{align*}
f\circpar{(1-\theta)\bx + \theta \by} \le 
(1-\theta)f\circpar{\bx} +
\theta f\circpar{\by} 
\end{align*}
If there exists $\mu>0$ such that,
\begin{align*}
f\circpar{(1-\theta)\bx + \theta \by} \le 
(1-\theta)f\circpar{\bx} +
\theta f\circpar{\by} -\frac{1}{2}\mu\theta(1-\theta)\normsq{\bx-\by}
\end{align*}
where $\bx,\by\in \reals^d$ and $\theta\in [0,1]$, then we say that $f$ is \emph{$\mu$-Strongly Convex}.
\\
\\
We say that $f$ is \emph{$L$-Smooth} if $f$ is continuously differentiable and its gradient $\nabla{f}$ is $L$-Lipschitz, that is 
\begin{align*}
\norm{\nabla f(\bx)-\nabla f(\by)} \le L\norm{\bx-\by}
\end{align*}
In this monograph we mainly focus on twice continuously  differentiable $L$-smooth $\mu$-strongly convex functions. The following Lemma, which will be stated without proof, is a useful characterization of these kind of functions.
\begin{lemma} \label{ineq_conv_hessian}
Suppose $f:\reals^d\to\reals$ is twice continuously differentiable. Then, $f$ is $L$-smooth $\mu$-strongly convex function if and only if
\begin{align*}
\mu I_d \preceq  \nabla^2 f (\bx) \preceq LI_d\quad x\in\reals^d
\end{align*}
\end{lemma}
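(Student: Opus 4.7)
The plan is to reduce the lemma to the one auxiliary fact that a twice continuously differentiable function $g:\reals^d \to \reals$ is convex if and only if $\nabla^2 g(\bx) \succeq 0$ for every $\bx\in\reals^d$. This auxiliary equivalence is established via the second-order Taylor expansion with integral remainder, $g(\by) = g(\bx) + \nabla g(\bx)^\top(\by-\bx) + \int_0^1 (1-t)(\by-\bx)^\top \nabla^2 g(\bx + t(\by-\bx))(\by-\bx)\, dt$. This identity shows that a nonnegative Hessian yields the first-order convexity characterization, while the converse follows by choosing $\by = \bx + s\bv$, subtracting the first-order term, dividing by $s^2$, and sending $s\to 0^+$ to extract $\bv^\top \nabla^2 g(\bx)\bv \geq 0$ for every $\bv$.

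With the auxiliary fact in hand, I would split the lemma into two independent equivalences. For the strong convexity side, I would set $g_1(\bx) \eqdef f(\bx) - \frac{\mu}{2}\normsq{\bx}$ and observe that the defining inequality of $\mu$-strong convexity for $f$, after rearrangement, is exactly the ordinary convexity inequality for $g_1$. Since $\nabla^2 g_1(\bx) = \nabla^2 f(\bx) - \mu I_d$, the auxiliary fact applied to $g_1$ shows that $\mu$-strong convexity of $f$ is equivalent to $\mu I_d \preceq \nabla^2 f(\bx)$ for every $\bx$.

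The smoothness side is where I expect the main obstacle to lie, because the given definition of $L$-smoothness is purely first-order (Lipschitzness of $\nabla f$) while the desired conclusion is second-order. My plan is to bridge the gap through the integral identity $\nabla f(\by) - \nabla f(\bx) = \int_0^1 \nabla^2 f(\bx + t(\by-\bx))(\by-\bx)\, dt$ and use it in both directions. Given the two-sided Hessian bound $\mu I_d \preceq \nabla^2 f(\bx) \preceq L I_d$ (whence in particular $\norm{\nabla^2 f(\bx)\bu} \leq L\norm{\bu}$ for every $\bu$), taking norms through the integral yields $L$-Lipschitzness of $\nabla f$. Conversely, if $\nabla f$ is $L$-Lipschitz, plugging $\by = \bx + s\bv$ into the identity, dividing by $s$, and sending $s\to 0^+$ gives $\norm{\nabla^2 f(\bx)\bv} \leq L\norm{\bv}$, so the eigenvalues of $\nabla^2 f(\bx)$ lie in $[-L,L]$; combined with the lower bound $\nabla^2 f(\bx) \succeq \mu I_d$ already obtained, this forces $\nabla^2 f(\bx) \preceq L I_d$ as claimed. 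Chaining the two equivalences yields the lemma.
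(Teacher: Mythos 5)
The paper actually states this lemma \emph{without} proof (it is introduced with ``The following Lemma, which will be stated without proof, is a useful characterization\ldots''), so there is no paper argument to compare against; your proposal is being judged on its own merits. Your proof is correct and follows the standard route. The reduction of $\mu$-strong convexity of $f$ to ordinary convexity of $g_1=f-\frac{\mu}{2}\normsq{\cdot}$ works because $(1-\theta)\normsq{\bx}+\theta\normsq{\by}-\normsq{(1-\theta)\bx+\theta\by}=\theta(1-\theta)\normsq{\bx-\by}$, which converts the defining inequality of strong convexity into that of convexity of $g_1$; and $\nabla^2 g_1=\nabla^2 f-\mu I_d$ closes that half via your auxiliary fact. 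The smoothness half via the integral identity for $\nabla f(\by)-\nabla f(\bx)$ is also sound in both directions. One small redundancy: in the converse direction, the Lipschitz bound already gives $\norm{\nabla^2 f(\bx)\bv}\le L\norm{\bv}$ for all $\bv$, and since $\nabla^2 f(\bx)$ is symmetric this alone places its spectrum in $[-L,L]$ and hence yields $\nabla^2 f(\bx)\preceq LI_d$; the lower bound $\nabla^2 f(\bx)\succeq\mu I_d$ is not needed for that particular implication, only to assemble the final two-sided conclusion. This is a cosmetic point, not an error, but it means the ``two equivalences'' really are independent and you do not need to chain them in the way the last sentence suggests.
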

For each $L$-smooth $\mu$-strongly convex function we assign a \emph{Condition Number}, defined by $Q=L/\mu$. Note that we used the same term 'condition number' for both matrices and functions. The reason for it is a key connection between positive definite matrices and smooth strongly convex functions:\\
Let us define $f:\reals^d\to\reals$ by
\begin{align} \label{eq:quad_func}
f(x) = \bx^\top A\bx + \bb^\top \bx
\end{align}
where $A\in\cS^d$ is a positive definite matrix (see appendix \ref{section:app_linear_algebra}) and  $\bb\in\reals^d$.\\
One may verify that $f$ is convex if and only if $A\succeq 0$. In addition, $f$ is strongly convex if and only if $A\succ 0$, in which case the condition number of $f$ and $A$ coincides.
Perhaps surprisingly, we shall soon demonstrate the essential role of the condition number, a single scalar associated with a smooth strongly convex function, in characterizing the convergence rate of optimization 
processes.

\chapter{Technical Lemmas} \label{chapter:tech}

\section{Stochastic Average Gradient (SAG)} \label{section:SAG}
This presentation of SAG is due to (\cite{roux2012stochastic}). We follow the notation used in the paper. \\
Let $g:\reals^d\to\reals$ be defined by, 
\begin{align*}
g(x) \eqdef \frac{1}{n}\sum_{i=1}^n f_i(\bx)
\end{align*}
where $f_i$ is $L$-smooth convex function and $g$ is $\mu$-strongly convex.\\
The update rule is defined by 
\begin{align} \label{SAC_x_step}
\bx^{k} = \bx^{k-1} - \frac{\alpha}{n}\sum_{i=1}^n \by^{k}_i
\end{align}
where in the $k$'th iteration we select a random function according to $i_k \sim \cU([n])$ and set,
\begin{align} \label{SAC_y_step}
\by^{k}_i = 
\begin{cases}
\nabla f_{i}(\bx^{k-1}) & \text{if } i=i_k\\
\by^{k-1}_i & \text{o.w.}
\end{cases}
\end{align}

We would like to recast this algorithm as $p$-CLI optimization algorithm for some $p$. However, tackling this problem directly would result in a quite involved expression. In order to partially bypass this issue we apply two tricks that are used in the paper. First, instead of randomly selecting $i_k \sim \cU([n])$, we define a random variable $z_i^k$ by,
\begin{align*}
z_i^k = 
\begin{cases}
1-\frac{1}{n}& \text{w.p. } \frac{1}{n}\\
-\frac{1}{n}& \text{w.p. } 1-\frac{1}{n}\\
\end{cases}
\end{align*}
$\bz^k$ is defined in such way that in each round exactly one $z^k_i$ takes the value $1-\frac{1}{n}$.\\
Thus, $\by^k$ and $\bx^k$ may be equivalently rewritten  as
\begin{align*}
\by^k_i&= \circpar{1-\frac{1}{n}}\by_i^{k-1} + \frac{1}{n} \nabla f_i(\bx^{k-1}) + z_i^k\circpar{\nabla f_i(\bx^{k-1})-\by^{k-1}_i}\\
\bx^k &= \bx^{k-1} - \frac{\alpha}{n} \sum_{i=1}^n \by_i^k \\
&= \bx^{k-1} - \frac{\alpha}{n}\sum_{i=1}^n\left[ \circpar{1-\frac{1}{n}}\by_i^{k-1} + \frac{1}{n} \nabla f_i(\bx^{k-1}) + z_i^k\circpar{\nabla f_i(\bx^{k-1})-\by^{k-1}_i} \right]\\
&= \bx^{k-1} - \frac{\alpha}{n}\sum_{i=1}^n \circpar{1-\frac{1}{n}-z^k_i}\by_i^{k-1} - \frac{\alpha}{n} \sum_{i=1}^n \circpar{\frac{1}{n} + z_i^k} \nabla f_i(\bx^{k-1})  \\
\end{align*}

It is straightforward to verify that probabilistic outcome of the these expressions is equivalent to (\ref{SAC_x_step}), (\ref{SAC_y_step}).\\
Our next task is to convert into operator form. When $f_i$ are quadratic functions, i.e. $f_i(x)=\frac{1}{2}x^\top A_i x,~A\succ0$ then the update rule becomes a linear transformation of the previous test point. This, in turn, enables us to encapsulate all the changes in $\by^k$ and $\bx^k$ in a single step. To this end, let us denote
\begin{align*}
\theta^k &= 
\circpar{
\begin{array} {ccc}
y^k_1\\
y^k_2\\
\vdots\\
y^k_n\\
x^k
\end{array}
},~
&\theta^* &= 
\circpar{
\begin{array} {ccc}
f'_1(x^*)\\
f'_2(x^*)\\
\vdots\\
f'_n(x^*)\\
x^*
\end{array}
}
\end{align*}
\eqref{SAC_x_step} can now be compactly expressed as $\theta^k = \Phi_{z^k} \theta^{k-1}$, where $\Phi_{z^k}$ is defined as,
\begin{align*} 
\circpar{\begin{array}{cccccccccc}
\circpar{1-\frac{1}{n} - z^k_1}I & 0 & \dots && 0 & \circpar{\frac{1}{n}+ z^k_1}A_1 \\
0 & \circpar{1-\frac{1}{n} - z^k_2}I & 0 & \dots & 0 & \circpar{\frac{1}{n}+ z^k_2}A_2 \\
\\
&&\ddots \\\\
0 & \dots & 0 && \circpar{1-\frac{1}{n} - z^k_n}I & \circpar{\frac{1}{n}+ z^k_n}A_n \\
-\frac{\alpha}{n}\circpar{1-\frac{1}{n}-z^k_1 }I &\dots &&&
-\frac{\alpha}{n}\circpar{1-\frac{1}{n}-z^k_n }I &
\circpar{I - \frac{\alpha}{n}\sum_{i=1}^n (\frac{1}{n}+z^k_i) A_i } 
\end{array}}
\end{align*}
Note that, $$\bE [z_i^k] = \frac{1}{n}\circpar{1-\frac{1}{n}} + \circpar{1-\frac{1}{n}}\circpar{\frac{-1}{n}}=0  $$ 
Thus,
\begin{align*} 
\bE[\Phi_z]&=\circpar{\begin{array}{cccccccccc}
\circpar{1-\frac{1}{n}}I & 0 & \dots && 0 & \frac{1}{n}A_1 \\
0 & \circpar{1-\frac{1}{n}}I & 0 & \dots & 0 & \frac{1}{n}A_2 \\
\\
&&\ddots \\\\
0 & \dots & 0 && \circpar{1-\frac{1}{n}}I & \frac{1}{n}A_n \\
-\frac{\alpha}{n}\circpar{1-\frac{1}{n} }I &\dots &&&
-\frac{\alpha}{n}\circpar{1-\frac{1}{n}}I &
I - \frac{\alpha}{n}\sum_{i=1}^n \frac{1}{n} A_i  
\end{array}}
\end{align*}
Which is closely related to the iteration matrix of general $d(n+1)$-CLI methods.

\section{Convergence Rates and Iteration Complexity}

For any sequence of positive real scalars $\set{a_k}_{k=1}^\infty$, we define $K:\reals^{++}\to\bN$ by
\begin{align*}
K(\epsilon) = \min \myset{k\in \bN }{a_k<\epsilon}
\end{align*}
The following lemma considers the asymptotic behavior of $K(\epsilon)$ for certain types of sequences.

\begin{lemma} \label{lemma:spec_to_rate}
Let $\set{a_k}_{k=1}^\infty$ be a sequence of real scalars, for which there exist $\rho_1\in(0,1)$, $c,C$ positive scalars and $m\in\bN$ such that
\begin{align}
ck^m\rho^k \le a_k \le Ck^m\rho^k
\end{align}
for sufficiently large $k$. 
Then 
\begin{align*}
K(\epsilon)=\Omega\circpar{\frac{\rho}{1-\rho}\ln(1/\epsilon)}
\end{align*}
and
\begin{align*}
K(\epsilon)=\bigO{\frac{1}{1-\rho}\ln(1/\epsilon)}
\end{align*}
Thus, since $\rho\in(0,1)$ we get
\begin{align} \label{eq:conv_rate_to_ic}
K(\epsilon)=\Theta\circpar{\frac{\rho}{1-\rho}\ln(1/\epsilon)}
\end{align}
\end{lemma}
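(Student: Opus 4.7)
The plan is to invert the two-sided bound $ck^m\rho^k \le a_k \le Ck^m\rho^k$ into a two-sided bound on the first index $K(\epsilon)$ at which the sequence drops below $\epsilon$, and then translate $1/\ln(1/\rho)$ into the stated expressions using elementary inequalities on $\ln$.

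For the upper bound on $K(\epsilon)$, I would use the inequality $a_k \le Ck^m\rho^k$: whenever $k$ is large enough that $Ck^m\rho^k<\epsilon$, we already have $a_k<\epsilon$, hence $K(\epsilon)\le k$. Taking logarithms, the condition becomes $k\ln(1/\rho)>\ln C + m\ln k + \ln(1/\epsilon)$, and I would verify it by choosing $k=\lceil\frac{1}{\ln(1/\rho)}(\ln(1/\epsilon)+\text{lower order})\rceil$. The key analytic input is the inequality $-\ln(1-t)\ge t$ for $t\in[0,1)$, applied with $t=1-\rho$, which gives $\ln(1/\rho)\ge 1-\rho$ and hence $\frac{1}{\ln(1/\rho)}\le\frac{1}{1-\rho}$. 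This yields $K(\epsilon)\le\frac{1}{1-\rho}\ln(1/\epsilon)+o(\ln(1/\epsilon))$, matching the claimed $O$-bound.

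For the lower bound, I would symmetrically use $a_k\ge ck^m\rho^k$: if $ck^m\rho^k\ge\epsilon$ then $a_k\ge\epsilon$, so $K(\epsilon)$ exceeds every such $k$. This time the relevant inequality is $\ln x\le x-1$ applied at $x=1/\rho$, giving $\ln(1/\rho)\le\frac{1-\rho}{\rho}$ and therefore $\frac{1}{\ln(1/\rho)}\ge\frac{\rho}{1-\rho}$. Choosing $k=\lfloor\frac{\rho}{1-\rho}\ln(1/\epsilon)\rfloor$ (minus lower-order corrections) and verifying the threshold then produces $K(\epsilon)\ge\frac{\rho}{1-\rho}\ln(1/\epsilon)-o(\ln(1/\epsilon))$, which is the desired $\Omega$-bound. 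Finally, since $\rho\in(0,1)$ is fixed, $\frac{\rho}{1-\rho}$ and $\frac{1}{1-\rho}$ differ only by a constant factor, so the two bounds combine into the $\Theta$-statement in \eqref{eq:conv_rate_to_ic}.

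The only real subtlety is absorbing the polynomial factor $k^m$ into the error term. The hard part is making sure the $m\ln k$ correction in the logarithmic inequality $k\ln(1/\rho)\gtrless \ln(c\text{ or }C)+m\ln k+\ln(1/\epsilon)$ does not contaminate the leading-order expression in $\ln(1/\epsilon)$. This is routine because, along a candidate $k=\Theta(\ln(1/\epsilon)/(1-\rho))$, one has $m\ln k = O(\ln\ln(1/\epsilon)+\ln\frac{1}{1-\rho})$, which is indeed of lower order relative to $\ln(1/\epsilon)$; a standard bootstrap (first use a crude bound on $k$ to estimate $\ln k$, then plug back) confirms this and lets one swallow the $m\ln k$ term plus the constants $\ln c,\ln C$ into the hidden $o(\cdot)$, completing both directions of the estimate.
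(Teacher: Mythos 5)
Your proposal is correct and takes essentially the same approach as the paper: both proofs reduce to the two-sided logarithmic bound $1-\rho \le \ln(1/\rho) \le (1-\rho)/\rho$ (the standard $\ln t \le t-1$ applied at $t=\rho$ and $t=1/\rho$), and then invert the threshold condition on $\rho^k$ to bound $K(\epsilon)$. The only cosmetic difference is that the paper reaches these elementary inequalities by an indirect chain starting from $1 - \frac{2}{x+1} \ge e^{-2/(x-1)}$, and it disposes of the $k^m$ factor in the lower bound simply via $k^m\ge 1$ while leaving the upper bound to ``a similar argument,'' whereas you invoke $\ln t\le t-1$ directly and spell out the bootstrap that absorbs $m\ln k$ into the lower-order term.
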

\begin{proof}
The following inequality,
\begin{align*}
1-\frac{2}{x+1}\ge \exp\circpar{\frac{-2}{x-1}}
\end{align*}
which applies for any $x\ge 1$, yields 
\begin{align} \label{ineq:rho_exp}
x &= 1 - (1-x) = 1 - \frac{1}{\frac{1}{(1-x)}} = 1 - \frac{2}{\frac{2}{(1-x)}} = 1 - \frac{2}{\circpar{\frac{2}{(1-x)}-1} + 1}\nonumber\\
&\ge \exp\circpar{\frac{-2}{\circpar{\frac{2}{(1-x)}-1} - 1}} = \exp\circpar{\frac{-2}{\frac{2}{(1-x)}- 2}} = \exp\circpar{\frac{-1}{\frac{1}{(1-x)}- 1}}
\end{align}
Thus,
\begin{align*}
\epsilon>a_k\ge ck^m\rho^k \ge c\rho^k  \ge c\exp\circpar{\frac{-k}{\frac{1}{(1-\rho_1)}- 1}}  
\end{align*}
(We might as well assume that the last inequality holds for any $k$).\\
Taking natural logarithm from both sides and rearranging we have,
\begin{align*}
\frac{\rho}{1-\rho}\circpar{\ln(1/\epsilon) + \ln(c)}< k  \\
\end{align*}

The upper bound is proven in a similar way using the following inequality,
\begin{align*}
x\le \exp(x-1)
\end{align*}
\end{proof}

\bibliographystyle{alpha}
\bibliography{thesis_bib}

\end{document}